\newtheorem{theorem}{Theorem}[section]
\newtheorem{lemma}[theorem]{Lemma}
\newtheorem{problem}[theorem]{Question}
\newtheorem{corollary}[theorem]{Corollary}
\newtheorem{proposition}[theorem]{Proposition}
\theoremstyle{definition}
\newtheorem{note}[theorem]{Note}
\theoremstyle{remark}
\begin{document}

\title[Zagier polynomials. Arithmetic properties]
{The Zagier polynomials. Part II: Arithmetic
properties of coefficients}

\author[M. Coffey et al.]{Mark W. Coffey}
\address{Department of Physics,
Colorado School of Mines, Golden, CO 80401}
\email{mcoffey@mines.edu}

\author[]{Valerio De Angelis}
\address{Department of Mathematics,
Xavier University of Louisiana, New Orleans, LA 70125}
\email{vdeangel@xula.edu}

\author[]{Atul Dixit}
\address{Department of Mathematics,
Tulane University, New Orleans, LA 70118}
\email{adixit@tulane.edu}

\author[]{Victor H. Moll}
\address{Department of Mathematics,
Tulane University, New Orleans, LA 70118}
\email{vhm@tulane.edu}

\author[]{Armin Straub}
\address{Department of Mathematics,
University of Illinois at Urbana-Champaign, Urbana, IL 61801}
\email{astraub@illinois.edu}

\author[]{Christophe Vignat}
\address{Department of Mathematics,
Tulane University, New Orleans, LA 70118 and
L.S.S. Supelec, Universite d'Orsay, France}
\email{vignat@tulane.edu}

\subjclass{Primary 11B68, 11B83}

\date{\today}

\keywords{2-adic valuations, digamma function, umbral calculus, Zagier polynomials.}

\begin{abstract}
The modified Bernoulli numbers
\begin{equation*}
B_{n}^{*} = \sum_{r=0}^{n} \binom{n+r}{2r} \frac{B_{r}}{n+r}, \quad n > 0
\end{equation*}
introduced by D. Zagier in $1998$ were recently 
extended to the polynomial case by
replacing $B_{r}$ by the Bernoulli polynomials $B_{r}(x)$. Arithmetic
properties of the coefficients of these polynomials are established. In
particular, the $2$-adic valuation of the modified Bernoulli numbers is
determined. A variety of analytic, umbral, and asymptotic methods is used to 
analyze these polynomials.
\end{abstract}

\maketitle

\newcommand{\ba}{\begin{eqnarray}}
\newcommand{\ea}{\end{eqnarray}}
\newcommand{\ift}{\int_{0}^{\infty}}
\newcommand{\nn}{\nonumber}
\newcommand{\no}{\noindent}
\newcommand{\A}{\mathfrak{A}}
\newcommand{\B}{\mathfrak{B}}
\newcommand{\C}{\mathfrak{C}}
\newcommand{\D}{\mathfrak{D}}
\newcommand{\E}{\mathfrak{E}}
\newcommand{\pe}{\mathfrak{P}}
\newcommand{\Q}{\mathfrak{Q}}
\newcommand{\U}{\mathfrak{U}}
\newcommand{\lf}{\left\lfloor}
\newcommand{\rf}{\right\rfloor}
\newcommand{\vm}{{}}
\newcommand{\realpart}{\mathop{\rm Re}\nolimits}
\newcommand{\imagpart}{\mathop{\rm Im}\nolimits}

\newcommand{\ZZ}{\mathbb{Z}}
\newcommand{\op}[1]{\ensuremath{\operatorname{#1}}}
\newcommand{\ontop}[2]{\ensuremath{\genfrac{}{}{0pt}{}{#1}{#2}}}
\newcommand{\pFq}[5]{\ensuremath{{}_{#1}F_{#2} \left( \genfrac{}{}{0pt}{}{#3}
{#4} \bigg| {#5} \right)}}

\newtheorem{Definition}{\bf Definition}[section]
\newtheorem{Thm}[Definition]{\bf Theorem}
\newtheorem{Example}[Definition]{\bf Example}
\newtheorem{Lem}[Definition]{\bf Lemma}
\newtheorem{Cor}[Definition]{\bf Corollary}
\newtheorem{Prop}[Definition]{\bf Proposition}
\numberwithin{equation}{section}

\section{Introduction}
\label{sec-intro}

The Bernoulli numbers $B_{n}$, defined by the generating function
\begin{equation}
\frac{t}{e^{t}-1} = \sum_{n=0}^{\infty} B_{n} \frac{t^{n}}{n!},
\label{bn-def}
\end{equation}
\noindent
were extended by D. Zagier \cite{zagier-1998a} with the introduction of the
so-called \textit{modified Bernoulli numbers} $B_{n}^{*}$ defined by
\begin{equation}
B_{n}^{*} = \sum_{r=0}^{n} \binom{n+r}{2r} \frac{B_{r}}{n+r}.
\label{bn-star}
\end{equation}
\noindent
Note that $B_{0}^{*}$ is undefined. Arithmetic
properties of $B_{2n}$ ($B_{1} = - \tfrac{1}{2}$ and $B_{2n+1} = 0$, for
$n > 0$), include
the von Staudt--Clausen theorem which states that, for $n>0$,
\begin{equation}
B_{2n} \equiv - \sum_{\ontop{(p-1) | 2n}{\text{$p$ prime}}} \frac{1}{p}
\quad \bmod 1.
\end{equation}
\noindent
It follows that the denominator of $B_{2n}$ is the
product of all primes $p$ such that $p-1$ divides $2n$. On the other hand,
the numerators of $B_{2n}$ are still a mysterious sequence.

The definition \eqref{bn-star} shows that $B_{n}^{*}$ is a rational
number. Write it in reduced form and define
\begin{equation}\label{def-alpha}
\alpha_{n} = \op{denom}(B_{n}^{*}).
\end{equation}
\noindent
Zagier \cite{zagier-1998a} showed that
\begin{equation}
\tilde{B}_{n} = 2nB_{n}^{*} - B_{n}
\end{equation}
\noindent
satisfies
\begin{equation}
\tilde{B}_{n} \equiv \sum_{\ontop{(p+1) | n}{p \text{ prime }}}
\frac{1}{p} \quad \bmod 1, \quad (n > 0, \, n \text{ even})
\end{equation}
\noindent
that implies
\begin{equation}
2nB_{n}^{*}  \equiv - \sum_{\ontop{(p-1) | n}{p \text{ prime }}}
\frac{1}{p}  +
\sum_{\ontop{(p+1) | n}{p \text{ prime }}}
\frac{1}{p}
\quad \bmod 1, \quad (n > 0).
\end{equation}
\noindent
This statement shows that if $p$ is a prime dividing $\alpha_{n}$ (defined in
\eqref{def-alpha}), then at least one of $p$, $p-1$ and $p+1$ divides $n$. In
particular, all prime factors  $p$ of $\alpha_{n}$ satisfy $p \leq n+1$. In
fact, from computing the first $1000$ terms, it appears that, conjecturally,
the following stronger statement is true: if $p$ is a prime dividing
$\alpha_{n}$, then $p+1$ or $p-1$ divides $n$.

The first few values of the sequence $\{B_{n}^{*} \}$ are
\begin{align*}
  \frac{3}{4},\frac{1}{24},-\frac{1}{4},-\frac{27}{80},-\frac{1}{4},
  -\frac{29}{1260},\frac{1}{4},\frac{451}{1120},\frac{1}{4},-\frac{65}{264},\ldots
\end{align*}

Our particular interest will be in the $2$-adic properties of this sequence and
the $2$-adic valuation of $B_n^*$ will be worked out completely.
A guiding question motivated by the first few terms as above is:

\begin{problem}\label{q:main}
Is the denominator $\alpha_{n}$ always divisible by $4$?
\end{problem}

This basic question will become particularly relevant when considering the
corresponding modifications of Bernoulli polynomials. This is addressed at the
end of this introduction.

It turns out that $\alpha_{2n+1} = 4$, so only even indices need to be
considered. The first few values of
$\tfrac{1}{4}\alpha_{2n}$ are given by
\begin{equation}
6, \, 20, \, 315, \, 280, \, 66, \, 3003, \, 78, \, 9520, \,
305235, \, 20900, \, 138, \, 19734, \, 6, \, 7540, \ldots
\end{equation}

This sequence has been recently added to OEIS (the
database created by N. Sloane) as entry $\text{A}216912$. The 
next figure shows the $2$-adic valuation
of $\alpha_{2n}$; that is, the highest power of $2$ that divides $\alpha_{2n}$.

\begin{figure}[htbp]
  \begin{center}
    \includegraphics[width=0.55\textwidth]{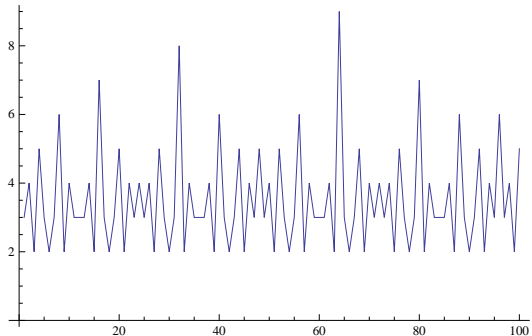}
    \caption{Power of $2$ that divides denominator of $B_{2n}^{*}$}
    \label{den-2a}
  \end{center}
\end{figure}

Symbolic computations lead us to discover the next result.
In particular, this answers Question \ref{q:main} in the affirmative.

\begin{theorem}
\label{thm:2adic}
  For $n>0$,
  \begin{align*}
    \nu_2(\alpha_n) = -\nu_2(B^*_n) = 2 + \nu_2(n)
    - \begin{cases} 1 & \text{if $n \equiv 6 \bmod 12$}, \\
      2 & \text{if $n \equiv 0 \bmod 12$}, \\
      0 & \text{otherwise.} \end{cases}
  \end{align*}
In particular, 
$B_{n}^{*}$, the denominator of $\alpha_{n}$, is divisible by $4$.
\end{theorem}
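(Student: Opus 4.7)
The plan is to split the analysis according to the residue of $n$ modulo $12$, since the three branches in the theorem correspond precisely to $6 \nmid n$, $n \equiv 6 \pmod{12}$, and $12 \mid n$. The basic input is Zagier's congruence for $2nB_n^*$ modulo $1$ recalled just above, together with von Staudt--Clausen and a direct analysis of the defining sum.

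First I would handle the case $6 \nmid n$, which splits into two subcases. When $n$ is even and $3 \nmid n$, Zagier's congruence reads $2nB_n^* \equiv -1/2 + T \pmod 1$, where $T$ is a sum of terms $\pm 1/p$ with $p$ an odd prime, so $\nu_2(T) \geq 0$. Since no integer can cancel a contribution of $2$-adic valuation $-1$, this forces $\nu_2(2nB_n^*) = -1$, and hence $\nu_2(B_n^*) = -2 - \nu_2(n)$, as claimed. When $n$ is odd, I would instead argue directly from the defining sum. Isolating the $r=0$ and $r=1$ contributions gives
\begin{equation*}
B_n^* = \frac{1}{n} - \frac{n}{4} + \sum_{k=1}^{\lfloor n/2 \rfloor} \frac{\binom{n+2k}{4k}}{n+2k}\, B_{2k}.
\end{equation*}
For $n$ odd, $\nu_2(1/n)=0$, and each tail term has $\nu_2 \geq -1$ (the denominator $n+2k$ is odd, the binomial coefficient is integral, and $\nu_2(B_{2k}) = -1$ by von Staudt--Clausen), while $\nu_2(-n/4) = -2$. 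The $-n/4$ contribution therefore has strictly smallest valuation, forcing $\nu_2(B_n^*) = -2$.

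The delicate cases are those with $6 \mid n$. Here Zagier's congruence only yields $\nu_2(2nB_n^*) \geq 0$ because the $p=2$ contributions from $(p-1)\mid n$ and $(p+1)\mid n$ cancel each other. For these I would work with the same decomposition as above and estimate each piece using Kummer's theorem for $\nu_2\!\bigl(\binom{n+2k}{4k}\bigr)$ together with Kummer-type congruences for $B_{2k}$ modulo powers of $2$. When $6 \mid n$, the terms $1/n$ and $-n/4$ have comparable valuations $-\nu_2(n)$ and $\nu_2(n)-2$, and the $B_{2k}$-tail can contribute at order $-1$, so the answer emerges only after tracking simultaneous cancellations; the target valuations are $-2$ for $n \equiv 6 \pmod{12}$ and $-\nu_2(n)$ for $12 \mid n$.

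The main obstacle is exactly this last step: showing that the $B_{2k}$-tail combines with the head terms to produce precisely the stated valuation, and distinguishing $n \equiv 6 \pmod{12}$ from $12 \mid n$ through the $3$-adic structure of $n/6$. I would expect to invoke an identity from Part I of this series expressing $B_n^*$ as a character or Chebyshev-type sum, which should expose the $2$-adic structure uniformly; alternatively, a direct argument grouping the tail into blocks indexed by $k$ modulo an appropriate power of $2$ and applying Kummer congruences could isolate the relevant cancellations.
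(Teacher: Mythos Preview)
Your treatment of the cases with $6\nmid n$ is correct and self-contained. For odd $n$ the ultrametric argument isolating the $-n/4$ term works exactly as you describe, and for even $n$ with $3\nmid n$ Zagier's congruence pins down $\nu_2(2nB_n^*)=-1$ since the $-1/2$ from $p=2$ in the $(p-1)\mid n$ sum cannot be cancelled by any odd-denominator term. (The paper dispatches odd $n$ instead by citing Zagier's periodicity of $B_{2n+1}^*$, but your route is just as clean.)

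The genuine gap is the case $6\mid n$, which you yourself flag as the obstacle and leave at the level of a sketch. The suggestions you list---Kummer's carry-count for the binomial factors, unspecified ``Kummer-type congruences'' for $B_{2k}$, or grouping the tail into $2$-adic blocks---do not, as stated, yield the result: the cancellations are not local to individual terms, so without a way to sum them in closed form you will not see why the answer is periodic of period $6$ in $n/2$. A small slip: the distinction between $n\equiv 6$ and $n\equiv 0\pmod{12}$ is controlled by the parity of $n/6$, i.e.\ its $2$-adic structure, not its $3$-adic structure.

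The paper supplies the two ingredients you are missing. First, it pins down $2B_{2k}\bmod 8$: for $k\ge 2$ one has $2B_{2k}\equiv 1$ if $k$ is even and $2B_{2k}\equiv 5$ if $k$ is odd, the even case via a strengthened von Staudt--Clausen and the odd case via Voronoi's congruence. Second---and here your instinct to look for a Chebyshev-type identity from Part~I is on target---the sums $\sum_k\binom{n+2k}{4k}\frac{2n}{n+2k}$ and its $(-1)^k$-twisted variant have \emph{rational} generating functions in $x$, inherited from $\sum_r\binom{n+r}{2r}\frac{x^r}{n+r}=\frac{1}{n}T_n(\tfrac{x}{2}+1)$. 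Feeding the mod-$8$ values of $2B_{2k}$ into $4nB_n^*=\sum_k\binom{n+k}{2k}\frac{2n}{n+k}\cdot 2B_k$ therefore reduces $\sum_n 4nB_n^*x^n$ modulo $8$ to an explicit rational function, whose even part has coefficients of period $6$ with $2$-adic valuations $\{0,0,1,0,0,2\}$. This global generating-function packaging is what replaces the term-by-term cancellation analysis you propose with a single algebraic identity; that, together with the precise mod-$8$ input on $2B_{2k}$, is the missing idea.
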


Note that this may be rephrased in the following way: The $2$-adic valuations $\nu_2(8nB^*_{2n})$
form a periodic sequence of period $6$ with values
\begin{equation}
 \left\{ 0, \, 0, \, 1, \, 0, \, 0, \, 2 \right\}.
\end{equation}
This is an unexpected variation on the \textit{period $6$ theme}: D. Zagier
proved that the sequence $\{ B_{2n+1}^{*} \}$ is $6$-periodic.

The modified Bernoulli numbers $B_{n}^{*}$ were extended in \cite{dixit-2012a}
to the Zagier polynomials defined by
\begin{equation}
B_{n}^{*}(x) = \sum_{r=0}^{n} \binom{n+r}{2r} \frac{B_{r}(x)}{n+r},
\label{bn-star-poly}
\end{equation}
\noindent
so that $B_{n}^{*} = B_{n}^{*}(0)$. The first few are:
\begin{align*}
  &\frac{1}{4} (2 x+3),
  \frac{1}{24} \left(6 x^2+18 x+1\right),
  \frac{1}{12} (2 x+3) \left(x^2+3 x-1\right),\\
  &\quad \frac{1}{80} \left(10 x^4+60 x^3+90 x^2-27\right),
  \frac{1}{60} (2 x+3) \left(3 x^4+18 x^3+23 x^2-12 x-5\right), \ldots
\end{align*}
In analogy to $\alpha_{n}$ in \eqref{def-alpha}, define, for $j \in \ZZ$,
\begin{equation}\label{def-alphan}
\alpha_{n,j} = \op{denom}(B_{n}^{*}(j)).
\end{equation}
It is shown in Lemma \ref{lem:den4} of Section \ref{sec:alphan} that, under the
assumption that $4$ divides $\alpha_n$, the denominators $\alpha_{n,j}$ equal
$\alpha_n$ for any $j \in \ZZ$.  Combining this with Theorem \ref{thm:2adic},
one obtains:

\begin{theorem}
\label{thm:alphan}
The denominator $\alpha_{n,j} = \op{denom}(B_{n}^{*}(j))$ does not
depend on the value $j\in\ZZ$.
\end{theorem}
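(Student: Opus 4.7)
The plan is to deduce Theorem \ref{thm:alphan} from Theorem \ref{thm:2adic}, which guarantees $4\mid\alpha_n$ for every $n>0$. What remains is the unstated Lemma \ref{lem:den4}: under the hypothesis $4\mid\alpha_n$, one has $\alpha_{n,j}=\alpha_n$ for every $j\in\ZZ$. (For $n=0$ there is nothing to prove since $B_0^*$ is undefined.) So the whole game is to control $B_n^*(j)$ modulo $B_n^*(0)$ finely enough.

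First I would make the $j$-dependence of $B_n^*(j)$ explicit modulo $\tfrac12\ZZ$. Iterating the difference relation $B_r(x+1)-B_r(x)=rx^{r-1}$ shows that $B_r(j)-B_r(0)\in r\ZZ$ for every $j\in\ZZ$, so one may write $B_r(j)-B_r(0)=r\,T_r(j)$ with $T_r(j)\in\ZZ$. Substituting into \eqref{bn-star-poly} and applying the elementary identity $\binom{n+r}{2r}\cdot\frac{r}{n+r}=\frac12\binom{n+r-1}{2r-1}$ yields
\begin{equation*}
B_n^*(j)-B_n^*(0)\;=\;\frac{1}{2}\sum_{r=1}^{n}\binom{n+r-1}{2r-1}\,T_r(j)\;\in\;\tfrac12\ZZ,
\end{equation*}
since each summand on the right is an integer. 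Note that the $r=0$ term vanishes automatically because $B_0$ is constant.

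Now write $B_n^*(0)=p/\alpha_n$ in lowest terms and choose $s\in\ZZ$ with $B_n^*(j)-B_n^*(0)=s/2$. Because $4\mid\alpha_n$, the integer $\alpha_n/2$ is even, and
\begin{equation*}
B_n^*(j)\;=\;\frac{p+s\,(\alpha_n/2)}{\alpha_n}.
\end{equation*}
I would then verify $\gcd\bigl(p+s\,(\alpha_n/2),\,\alpha_n\bigr)=1$ prime by prime: for any odd prime $\ell\mid\alpha_n$ we have $\ell\mid\alpha_n/2$, so the numerator reduces to $p\not\equiv 0\pmod\ell$; at $\ell=2$ the term $s\,(\alpha_n/2)$ is even, so the numerator has the same (odd) parity as $p$. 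Hence the displayed fraction is already in lowest terms, giving $\alpha_{n,j}=\alpha_n$. The arithmetic is routine; the real leverage is the full factor $2^2$ in $\alpha_n$ supplied by Theorem \ref{thm:2adic}. Without that second factor of $2$, the half-integer shift $s/2$ could cancel the $2$-part of the denominator, and the conclusion would fail. The only serious obstacle, namely pinning down $\nu_2(\alpha_n)$, has thus been dealt with upstream.
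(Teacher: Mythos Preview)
Your proof is correct and follows the same overall scheme as the paper: first show that $B_n^*(j)-B_n^*(0)\in\tfrac12\ZZ$, then use the divisibility $4\mid\alpha_n$ from Theorem~\ref{thm:2adic} to rule out any cancellation in $(p+s\alpha_n/2)/\alpha_n$. The arithmetic conclusion (checking the numerator is coprime to $\alpha_n$ at the prime $2$ and at each odd prime) is identical to the paper's.

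The difference lies in how the half-integer step is obtained. The paper invokes the identity $B_n^*(x+1)=B_n^*(x)+\tfrac12 U_{n-1}(\tfrac{x}{2}+1)$ together with a separate lemma that Chebyshev polynomials of the second kind take integer values at half-integers, and then inducts on $j$. You bypass both the Chebyshev identity and the induction by working directly from $B_r(x+1)-B_r(x)=rx^{r-1}$, which immediately gives $B_r(j)-B_r(0)\in r\ZZ$, and then the binomial identity $\binom{n+r}{2r}\frac{r}{n+r}=\tfrac12\binom{n+r-1}{2r-1}$ makes the half-integrality transparent in one line. This is a genuine simplification: your argument is self-contained and does not rely on any external facts about $U_n$. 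On the other hand, the paper's route makes explicit the pleasant structural connection to Chebyshev polynomials, which is used elsewhere in the paper; your approach trades that context for economy.
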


Special values of $B_{n}^{*}(x)$ present interesting arithmetic properties.
The relation
\begin{equation}
B_{n}^{*}(x+1) = B_{n}^{*}(x) + \frac{1}{2}U_{n-1} \left( \frac{x}{2} + 1
\right),
\end{equation}
\noindent
relating $B_{n}^{*}$ to the Chebyshev polynomial of the second kind, appears as
Lemma $10.2$ in \cite{dixit-2012a}. In particular, this shows the identity
\begin{equation}
B_{n}^{*}(1) = B_{n}^{*} + \frac{n}{2}.
\end{equation}
\noindent
On the other hand, the values $B_{n}^{*}(-1)$ are connected to the
asymptotic expansion of the function
\begin{equation}
V(z) = \log z + \psi \left( z + \frac{1}{z} \right)
\end{equation}
\noindent
at $z \to 0$. Here, $\psi(z)$ is the digamma function
\begin{equation}
\psi(z) = \frac{\Gamma'(z)}{\Gamma(z)},
\end{equation}
the logarithmic derivative of the gamma function. The proof of the
next statement appears
in Section \ref{sec-zagier}.

\begin{theorem}
Define the numbers $v_{n}$ by the asymptotic expansion
\begin{equation}
V(z) \sim \sum_{n=0}^{\infty} v_{n} z^{n}.
\end{equation}
\noindent
Then $v_{n} = -2B_{n}^{*}(-1)$.
\end{theorem}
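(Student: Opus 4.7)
The plan is to expand $V(z)$ via the classical asymptotic expansion of $\psi$ at infinity and then recognize the result as the generating series for $-2B_n^*(-1)$ through an umbral manipulation. The standard expansion $\psi(w) \sim \log w - \tfrac{1}{2w} - \sum_{k \geq 1} \tfrac{B_{2k}}{2k\, w^{2k}}$ may be written uniformly as
\[
\psi(w) \sim \log w - \sum_{n=1}^{\infty} \frac{B_n(1)}{n\, w^n},
\]
using $B_1(1) = 1/2$, $B_{2k}(1) = B_{2k}$, and the vanishing of the odd Bernoulli numbers beyond $B_1$. Setting $w = z + 1/z$ and using $\log z + \log(z + 1/z) = \log(1 + z^2)$ together with $1/w = z/(1 + z^2)$ then yields
\[
V(z) \sim \log(1 + z^2) - \sum_{n=1}^{\infty} \frac{B_n(1)}{n} \left( \frac{z}{1 + z^2} \right)^n.
\]

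The second step is to derive a closed form for the generating series of the umbral Zagier polynomials $F_n(y) := \sum_{r=0}^{n} \binom{n+r}{2r} y^r / (n+r)$, characterized by $B_n^*(x) = F_n(y)|_{y^r \mapsto B_r(x)}$. Using $\binom{n+r}{2r}/(n+r) = \binom{n+r-1}{2r-1}/(2r)$ for $r \geq 1$, exchanging the order of summation via $\sum_{n \geq r} \binom{n+r-1}{2r-1} z^n = z^r/(1-z)^{2r}$, and summing the resulting geometric–log series produces
\[
\sum_{n=1}^{\infty} F_n(y)\, z^n \;=\; -\log(1-z) - \tfrac{1}{2} \log\bigl(1 - \tfrac{yz}{(1-z)^2}\bigr) \;=\; -\tfrac{1}{2} \log\bigl(1 - (y+2)z + z^2\bigr).
\]

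In the third step the umbral substitution $y^r \mapsto B_r(-1)$ is applied. By the Bernoulli addition formula $B_r(1) = \sum_{k} \binom{r}{k}\, 2^{r-k}\, B_k(-1)$, the polynomial $(y+2)^r$ in $y$ evaluates to $B_r(1)$ under this substitution. Letting $A$ denote an umbral variable with $A^r \mapsto B_r(1)$, this yields
\[
\sum_{n=1}^{\infty} B_n^*(-1)\, z^n \;=\; -\tfrac{1}{2} \log\bigl(1 - A z + z^2\bigr) \qquad (\text{umbrally}).
\]
Returning to $V(z)$, the sum in the $\psi$-expansion equals $-\log(1 - Au)$ with $u = z/(1+z^2)$, and then $(1+z^2)(1 - Au) = 1 - Az + z^2$ gives
\[
V(z) \;\sim\; \log(1 + z^2) + \log(1 - A u) \;=\; \log\bigl(1 - A z + z^2\bigr) \qquad (\text{umbrally}),
\]
so $V(z) \sim -2 \sum_{n \geq 1} B_n^*(-1)\, z^n$ and $v_n = -2 B_n^*(-1)$. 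The point requiring care is that the umbral logarithm be well-defined as a formal power series in $z$ with coefficients that are polynomials in $A$; this is immediate since $[z^n] \log(1 - A z + z^2) = -2\, T_n(A/2)/n$, where $T_n$ is the Chebyshev polynomial of the first kind, a genuine polynomial of degree $n$ to which the umbral substitution applies termwise.
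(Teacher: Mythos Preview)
Your proof is correct and self-contained, but it takes a genuinely different route from the paper. The paper's argument (Section~\ref{sec-zagier}) is a one-liner: it invokes the generating function
\[
\sum_{n=1}^{\infty} B_n^*(x)\,z^n \;=\; -\tfrac12\log z - \tfrac12\,\psi\!\left(z+\tfrac1z-1-x\right)
\]
from \cite{dixit-2012a}, sets $x=-1$, and reads off $-\tfrac12 V(z)=\sum_n B_n^*(-1)z^n$. You instead \emph{rederive} that generating function in the special case $x=-1$: first expanding $V(z)$ via the asymptotic series for $\psi$, then computing the generating function of the formal polynomials $F_n(y)=\sum_r\binom{n+r}{2r}y^r/(n+r)$ in closed form, and finally matching the two via the umbral translation $(y+2)^r\mapsto B_r(1)$. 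What this buys is independence from the cited result and an explicit Chebyshev interpretation $[z^n]\log(1-Az+z^2)=-2T_n(A/2)/n$, which justifies the termwise umbral substitution and is close in spirit to the representation $B_n^*(x)=\tfrac1n\,T_n\bigl((\mathfrak{B}+x+2)/2\bigr)$ used elsewhere in the paper. The paper's route is shorter; yours is more elementary and exposes the mechanism behind the generating function.
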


The value $v_{2n-1} = (-1)^{n}/2$ is simple to obtain, but
\begin{equation} 
v_{2n}  =  (-1)^{n+1} \left[
\frac{1}{n} + \sum_{k=1}^{n} (-1)^{k}
\binom{n+k-1}{n-k} \frac{B_{2k}}{2k} \right]
\end{equation}
\noindent
requires further work.

A second motivation for considering the sequence
$\{ v_{n} \}$ comes from the natural interest in the sequence $\{B_{2n}^{*} \}$.
The established fact that $\{ B_{2n+1}^{*} \}$ is $6$-periodic has no
obvious analog for the even indices. It turns out that the function
$V(z)$ satisfies
\begin{equation}
\label{genfunB-even}
\sum_{n=1}^{\infty} B_{2n}^{*}z^{2n} = - \frac{1}{2} V(z) -
\frac{z}{4} \left[ \frac{1}{z^{2}+1} + \frac{2(1-z^{4})}{1-z^{6}} \right],
\end{equation}
\noindent
thus connecting $B_{2n}^{*}$ and $v_{n}$.

A variety of expressions for the coefficients $v_{n}$ are provided. 
Section \ref{sec-auxiliary} gives one using the 
umbral method and Section \ref{sec-zagier}
exploits a relation between the Zagier polynomials $B_{n}^{*}$ and the 
Chebyshev polynomials $U_{n}(x)$ to determine $v_{n}$. A direct 
asymptotic method is used in 
Section \ref{sec-glasser} and Section \ref{sec-asymptotics} presents a 
family of polynomials that determine $v_{n}$. The classical integral 
representation of the digamma function is used in Section \ref{sec-int-rep}, 
the formula of Fa\`{a} di Bruno to differentiate compositions is used in 
Section \ref{sec-faa} and, finally, a recurrence for $v_{n}$ is 
analyzed in Section \ref{sec-vn} by the WZ-method \cite{petkovsek-1996a}.

\section{The $2$-adic valuation of $B^*_n$}
\label{sec:2adic}

The goal of this section is to establish Theorem \ref{thm:2adic} which
determines the $2$-adic valuation of the sequence $B^*_n$.

The strategy employed here is as follows. It is a consequence of the von
Staudt--Clausen congruence that the Bernoulli numbers $2B_n$ are $2$-integral.
From this one may conclude that the rational numbers $4nB^*_n$ are
$2$-integral as well. In particular, these numbers can be reduced modulo
powers of $2$ to determine their $2$-adic valuation.
Here, it will be sufficient to reduce them 
modulo $8$. To begin with, the classical
Bernoulli numbers are reduced modulo $8$.

\begin{proposition}
\label{prop:Bmod8}
  The following congruences hold modulo $8$:
  \[ 2 B_0 \equiv 2, \hspace{1em} 2 B_2 \equiv 3, \hspace{1em} 2 B_{2 k}
     \equiv \left\{ \begin{array}{ll}
       1 & \text{if $k$ even},\\
       5 & \text{if $k$ odd},
     \end{array} \right. \]
  with $k > 1$.
\end{proposition}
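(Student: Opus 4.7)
The plan is to proceed by induction on $k$, using the fundamental Bernoulli recurrence as the engine and carrying out all arithmetic modulo $8$. A first useful step is to observe that the stated case distinction can be rewritten as the single congruence $2 B_{2k} \equiv 1 + 4k \pmod{8}$ for $k \geq 2$.

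The recurrence needed is obtained from $B_n(1) = B_n$ (valid for $n \geq 2$) combined with the standard $B_n(1) = \sum_{k} \binom{n}{k} B_k$, together with $B_1 = -1/2$ and $B_{2j+1} = 0$ for $j \geq 1$. Setting $n = 2m+1$ yields
\begin{equation*}
\sum_{j=0}^{m} \binom{2m+1}{2j} (2 B_{2j}) = 2m + 1, \qquad m \geq 1,
\end{equation*}
and since the leading coefficient $\binom{2m+1}{2m} = 2m+1$ is odd, the same recurrence inductively implies that every $2 B_{2j}$ is $2$-integral. The cases $2 B_0 = 2$ and $2 B_2 = 1/3 \equiv 3 \pmod{8}$ (the latter from $m = 1$) are read off at once.

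For the inductive step, write $c_j := 2 B_{2j}$ and assume $c_1 \equiv 3 \pmod 8$ together with $c_j \equiv 1 + 4j \pmod{8}$ for $2 \leq j \leq m-1$. Solving the recurrence for $c_m$ and substituting reduces the task to evaluating $\sum_{j=2}^{m-1} \binom{2m+1}{2j}(1 + 4j) \pmod{8}$. The two standard identities
\begin{equation*}
\sum_{j=0}^{m} \binom{2m+1}{2j} = 2^{2m}, \qquad 2 j \binom{2m+1}{2j} = (2m+1) \binom{2m}{2j-1},
\end{equation*}
together with $\sum_{j=1}^{m} \binom{2m}{2j-1} = 2^{2m-1}$, do most of the work: because $m \geq 2$ forces $2^{2m} \equiv 0 \pmod 8$, the linear-in-$j$ piece $4 \sum_{j} j \binom{2m+1}{2j}$ simplifies to $(2m+1) 2^{2m} - 8m(2m+1) \equiv 0 \pmod 8$ and drops out entirely, while the constant piece contributes only boundary terms at $j = 0, 1, 2, m$. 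Putting everything together, the verification collapses to the elementary congruence $4 m(m+1) \equiv 0 \pmod{8}$, which is immediate because $m(m+1)$ is always even.

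The main obstacle is pure bookkeeping: carefully isolating the boundary contributions at $j = 0, 1, 2$ and $j = m$ from the main sum before reducing modulo $8$, and then checking that the $4j$ correction inherited from the inductive hypothesis is killed by the second binomial identity. No deeper input (Kummer-type congruences, the von Staudt--Clausen theorem, etc.) is required beyond the $2$-integrality already built into the setup.
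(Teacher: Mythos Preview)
Your argument is correct and genuinely different from the paper's.  The paper splits into two cases: for $k$ even it invokes the sharpened von Staudt--Clausen congruence $pB_{2k}\equiv p-1\pmod{p^{\ell+1}}$ with $p=2$, $\ell=2$; for $k$ odd it appeals to Voronoi's congruence with parameters $a=3$, $n=64$ and then verifies a residual identity by checking finitely many residue classes.  Your route avoids both tools entirely: the elementary recurrence $\sum_{j=0}^{m}\binom{2m+1}{2j}(2B_{2j})=2m+1$ already gives $2$-integrality (since the leading coefficient $2m+1$ is odd), and the induction modulo $8$ then goes through with nothing beyond the closed forms for $\sum_j\binom{2m+1}{2j}$ and $\sum_j j\binom{2m+1}{2j}$ and the parity of $m(m+1)$.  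The payoff of your approach is self-containment and uniformity---no case split on the parity of $k$ and no external congruences---while the paper's proof has the virtue of showing how the result sits inside the classical theory (and, for even $k$, actually yields the stronger statement $2B_{2k}\equiv 1\pmod{2^{r+1}}$ where $2^r\Vert 2k$).
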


\begin{proof}
The von Staudt--Clausen theorem states that 
\begin{equation}
pB_{2k} \equiv p-1 \bmod p^{\ell +1}
\end{equation}
\noindent
for $p$ prime, $k \geq 2$ and when $(p-1)p^{\ell}$ divides $2k$; see 
\cite{olver-2010a}, formula $24.10.2$ on page 593. Now 
take $p=2$ and $\ell =1$. Then for $k \geq 2$ it follows that 
$2B_{2k} \equiv 1 \bmod 4$. Therefore 
$2B_{2k} \equiv 1 \text{ or } 5 \bmod 8$. In the case $k$ is even, one may 
take $\ell = 2$, since then $(p-1)p^{\ell} = 4$ divides $2k$. Therefore
\begin{equation}
2B_{2k} \equiv 1 \bmod 8.
\end{equation}
\noindent 
A different proof of this fact appears in \cite{carlitz-1957a}. The identity 
established there is 
\begin{equation}
2B_{2k} \equiv 1 \bmod 2^{r+1}
\end{equation}
\noindent
where $2^{r}$ is the highest power of $2$ that divides $2k$. In particular, for 
$k$ even, $r \geq 2$ and the result follows. 

The case $k$ odd requires a different approach. \\

  Let $U_m$ be the numerator and $V_m$ the denominator of $B_m$, so that $B_m
  = U_m / V_m$ and $(U_m, V_m) = 1$, $V_m > 0$. Voronoi's congruence
  \cite[Proposition 15.2.3]{ireland-1990a} states 
that, if $m \geq 2$ is even and
  $a$, $n$ are positive integers with $(a, n) = 1$, then
  \[ (a^m - 1) U_m \equiv m a^{m - 1} V_m \sum_{j = 1}^{n - 1} j^{m - 1}
     \left[ \frac{j a}{n} \right] \hspace{1em} \bmod n.
  \]
  As usual, $[x]$ refers to the greatest  integer less than or equal to $x$.
  It follows from the von Staudt--Clausen congruence that $2 B_{2 m}$ has
  $2$-adic valuation $0$ for $m > 0$, so that they are $2$-integral.
  Voronoi's congruence with $a = 3$ and $n = 64$ therefore yields
  \begin{eqnarray*}
    (3^m - 1) 2 B_m & \equiv & 2 m \, 3^{m - 1} \sum_{j = 1}^{63} j^{m - 1}
    \left[ \frac{3 j}{64} \right] \hspace{1em} \bmod 64.
  \end{eqnarray*}
  One easily checks that, for even $m$, $3^m - 1 \equiv 4 m$ modulo $64$.
  Similarly, after checking finitely many cases, for $m \equiv 2$ modulo $4$
  with $m \geq 6$,
  \[ 3^{m - 1} \sum_{j = 1}^{63} j^{m - 1} \left[ \frac{3 j}{64} \right]
     \equiv 42 \bmod 64. \]
  Combining these, one finds, for $m \equiv 2$ modulo $4$ with $m \geq 6$,
  \begin{eqnarray*}
    2 B_m  \frac{m}{2} & \equiv & 5 \frac{m}{2} \hspace{1em} \bmod 8.
  \end{eqnarray*}
  Hence, if $m = 2 k$ with $k \geq 3$ odd, then $2 B_m \equiv 5$ modulo
  $8$.
\end{proof}

Further basic ingredients are the following generating functions.

\begin{proposition}\label{prop:genfuns}
The following generating functions admit rational closed-forms:
  \begin{align}
  2 + \sum_{n = 1}^{\infty} x^n \sum_{k = 0}^n \binom{n + k}{2 k}  \frac{2 n}{n
    + k} & = \frac{2 - 3 x}{1 - 3 x + x^2},  \label{eq:genfun1}\\
    2 + \sum_{n = 1}^{\infty} x^n \sum_{k=0}^{\lf n/2 \rf} \binom{n + 2 k}{4 k}  \frac{2 n}{n +
    2 k} & = \frac{\left( 1 - 2 x \right) \left( 2 - 2 x + x^2
    \right)}{\left( 1 - x + x^2 \right) \left( 1 - 3 x + x^2 \right)},
    \nonumber\\
    2 + \sum_{n = 1}^{\infty} x^n \sum_{k=0}^{\lf n/2 \rf} 
\left( - 1 \right)^k \binom{n + 2
    k}{4 k}  \frac{2 n}{n + 2 k} & = \frac{2 - 6 x + 7 x^2 - 2 x^3}{1 - 4 x
    + 7 x^2 - 4 x^3 + x^4} . \nonumber
  \end{align}
\end{proposition}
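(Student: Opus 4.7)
The plan is to prove a single master identity in a free parameter $y$ and then recover the three stated formulas as specializations. Specifically, I would show
\begin{equation*}
G(x,y) := 2 + \sum_{n=1}^{\infty} x^n \sum_{k=0}^n \binom{n+k}{2k}\frac{2n}{n+k} y^k = \frac{2 - (2+y)x}{1 - (2+y)x + x^2}.
\end{equation*}
The first stated identity is exactly the case $y = 1$. For the second, the even-part filter $\tfrac{1}{2}\bigl[G(x,1)+G(x,-1)\bigr]$ selects the summands with even $k$, producing the left-hand side of the second formula; combining the two rational functions over the common denominator $(1-x+x^2)(1-3x+x^2)$ gives the stated right-hand side. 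For the third, the filter $\tfrac{1}{2}\bigl[G(x,i)+G(x,-i)\bigr]$ retains even $k$ weighted by $\tfrac{1}{2}(i^k+(-i)^k) = (-1)^{k/2}$, matching the inner sum in (3); the denominator $[1-(2+i)x+x^2][1-(2-i)x+x^2]$ expands to $1-4x+7x^2-4x^3+x^4$.

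To prove the master identity, I would start from the standard binomial expansion $\sum_{n\geq 0}\binom{n+k}{2k} x^n = x^k/(1-x)^{2k+1}$. Writing $\tfrac{2n}{n+k} = 2 - \tfrac{2k}{n+k}$ and using $\tfrac{1}{n+k}\binom{n+k}{2k} = \tfrac{1}{2k}\binom{n+k-1}{2k-1}$ for $k \geq 1$, one obtains
\begin{equation*}
\sum_{n \geq 0}\binom{n+k}{2k}\frac{2n}{n+k}\, x^n = \frac{2x^k}{(1-x)^{2k+1}} - \frac{x^k}{(1-x)^{2k}} = \frac{x^k(1+x)}{(1-x)^{2k+1}}.
\end{equation*}
Summing this over $k \geq 1$ weighted by $y^k$ as a geometric series with ratio $xy/(1-x)^2$ and adding the $k=0$ piece (which, together with the leading constant $2$, equals $2/(1-x)$) yields, after collecting over the common denominator $(1-x)[(1-x)^2 - xy]$ and cancelling a factor of $(1-x)$, the closed form claimed above, since $(1-x)^2 - xy = 1-(2+y)x+x^2$.

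The work is largely algebraic bookkeeping; the main obstacle is the careful treatment of the singular $(n,k)=(0,0)$ term together with the simplification of the numerators in the root-of-unity specializations. A conceptually cleaner alternative is to recognize the inner polynomial $c_n(y) := \sum_{k=0}^n\binom{n+k}{2k}\tfrac{2n}{n+k}\, y^k$ as $\alpha^n + \beta^n$, where $\alpha,\beta$ are the roots of $t^2 - (2+y)t + 1 = 0$; this can be verified directly from the three-term recurrence $c_n = (2+y)c_{n-1} - c_{n-2}$ with $c_0 = 2$, $c_1 = 2+y$, after which Binet's formula produces the desired rational generating function. For $y=1$ the roots are the squares of the golden ratio and its conjugate, so the first identity encodes the classical formula $L_{2n} = \sum_{k=0}^n\binom{n+k}{2k}\tfrac{2n}{n+k}$ for even-indexed Lucas numbers.
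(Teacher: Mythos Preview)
Your argument is correct.  The master identity
\[
2+\sum_{n\ge 1}x^n\sum_{k=0}^{n}\binom{n+k}{2k}\frac{2n}{n+k}\,y^{k}
=\frac{2-(2+y)x}{1-(2+y)x+x^{2}}
\]
is exactly the right unifying statement, and your root-of-unity filters $y=\pm 1$ and $y=\pm i$ recover the three formulas; the algebra you indicate checks out.

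The paper takes a different, somewhat shorter route: it recognizes the inner polynomial in $y$ as $2T_{n}\!\left(\tfrac{y}{2}+1\right)$, where $T_{n}$ is the Chebyshev polynomial of the first kind (this is Lemma~9.1 of \cite{dixit-2012a}), and then reads the master identity off the classical generating function $\sum_{n\ge 0}T_{n}(a)t^{n}=(1-at)/(1-2at+t^{2})$ with $a=1+y/2$.  Your ``conceptually cleaner alternative'' via the three-term recurrence $c_{n}=(2+y)c_{n-1}-c_{n-2}$ is exactly this Chebyshev identification in disguise, since that recurrence with those initial values characterizes $2T_{n}(1+y/2)$.  Your primary approach, by contrast, is a self-contained binomial-series computation that never names Chebyshev and does not rely on the cited lemma; it trades a single structural observation for a short explicit calculation.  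Either way the specialization step is the same, and your presentation of it (averaging $G(x,\pm 1)$ and $G(x,\pm i)$) is more explicit than the paper's terse ``readily follow''.
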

\begin{proof}
These readily follow from the generating function for $T_{n}(x)$, the 
Chebyshev polynomials of the first kind, given by 
\begin{equation}
\sum_{n=0}^{\infty} T_{n}(x) t^{n} = \frac{1 - xt}{1 - 2xt + t^{2}}
\end{equation}
\noindent
and from the fact 
\begin{equation}
\sum_{r=0}^{n} \binom{n+r}{2r} \frac{x^{r}}{n+r} = \frac{1}{n} 
T_{n} \left( \frac{x}{2} +1 \right)
\end{equation}
\noindent
proved as Lemma $9.1$ in \cite{dixit-2012a}.
\end{proof}

Equipped as such, a proof of Theorem \ref{thm:2adic} is given next. The 
statement of this theorem is repeated for the 
convenience of the reader.

\begin{theorem}\label{thm:2adic2}
  For $n>0$,
  \begin{align*}
    -\nu_2(B^*_n) = 2 + \nu_2(n)
    - \begin{cases} 1 & \text{if $n \equiv 6 \bmod 12$}, \\
      2 & \text{if $n \equiv 0 \bmod 12$}, \\
      0 & \text{otherwise}. \end{cases}
  \end{align*}
\end{theorem}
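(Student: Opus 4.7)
The plan is to determine $\nu_2(B_n^*)$ by computing $4nB_n^*$ explicitly modulo $8$.

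Splitting off the $r=0$ and $r=1$ contributions to the defining sum (using $B_0=1$ and $B_1=-1/2$) gives
\begin{equation*}
4nB_n^* \;=\; 4 - n^2 \;+\; \sum_{k=1}^{\lfloor n/2 \rfloor} 2B_{2k}\, a_{n,k}, \qquad a_{n,k} := \binom{n+2k}{4k}\frac{2n}{n+2k}.
\end{equation*}
Since $2B_{2k}$ is $2$-integral by von Staudt--Clausen and $a_{n,k}$ is an integer, $4nB_n^*$ is $2$-integral, so its $2$-adic valuation is detected by reducing it modulo a fixed power of $2$. By Proposition \ref{prop:Bmod8}, modulo $8$ each $2B_{2k}$ may be replaced by $3,1$, or $5$ according as $k=1$, $k\ge 2$ is even, or $k\ge 3$ is odd. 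Reorganizing the tail in terms of the plain and alternating sums $\sum_{k\ge 1}a_{n,k}$ and $\sum_{k\ge 1}(-1)^k a_{n,k}$, whose generating functions are supplied by the last two identities of Proposition \ref{prop:genfuns}, a short manipulation yields a working congruence of the form
\begin{equation*}
4nB_n^* \;\equiv\; 2 - n^2 - 2a_{n,1} + 3\beta_n - 2\gamma_n \pmod 8,
\end{equation*}
where $\beta_n$ and $\gamma_n$ are the coefficient sequences featured in Proposition \ref{prop:genfuns}.

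Each of $\beta_n$ and $\gamma_n$ satisfies a linear recurrence with integer coefficients, read off from the denominator of its rational generating function, namely $(1-x+x^2)(1-3x+x^2)$ and $1-4x+7x^2-4x^3+x^4$. Reducing these recurrences modulo $8$ (the leading coefficient is $1$, so the sequences are purely periodic) and computing the first few terms shows that both $\beta_n$ and $\gamma_n$ have period dividing $12$ modulo $8$. The residues $n^2 \bmod 8$ and $2a_{n,1}=n^2(n^2-1)/6 \bmod 8$ are likewise periodic in $n$, with periods dividing $4$ and $16$ respectively. Consequently $4nB_n^* \bmod 8$ depends on $n$ only through a fixed finite modulus, and a tabulation of a full period shows two things simultaneously: the residue is never $\equiv 0\pmod 8$, so $\nu_2(4nB_n^*)\le 2$; and the residue equals $2^\delta\cdot(\mathrm{odd})$ with $\delta\in\{0,1,2\}$ distributed exactly according to whether $n\equiv 0\pmod{12}$, $n\equiv 6\pmod{12}$, or neither. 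Combining this with the identity $\nu_2(B_n^*)=\nu_2(4nB_n^*)-2-\nu_2(n)$ gives the statement of the theorem.

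The main obstacle is the bookkeeping around the different periods of the ingredients: $\beta_n$ and $\gamma_n$ are period $12$ mod $8$, whereas $2a_{n,1}$ has a larger period, so one must either enlarge the verification window to the LCM of the periods or argue that the extra variation of $2a_{n,1}$ never alters the $2$-adic valuation of the entire expression when $n$ is shifted by $12$. Once the mod $8$ formula is in place and the recurrences for $\beta_n,\gamma_n$ are extracted from Proposition \ref{prop:genfuns}, the remainder is a routine finite case check, the conceptual content lying entirely in the reductions supplied by Propositions \ref{prop:Bmod8} and \ref{prop:genfuns}.
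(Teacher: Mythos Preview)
Your approach is essentially the paper's: reduce $4nB_n^*$ modulo $8$ via Proposition~\ref{prop:Bmod8}, express the result in terms of the sequences of Proposition~\ref{prop:genfuns}, and read off the $2$-adic valuation from a finite periodicity check. The working congruence you derive matches the paper's (their $-n\binom{n+1}{3}$ is your $-2a_{n,1}$).

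The only difference is in how the final bookkeeping is organized. You propose to track the periods of $n^2$, $2a_{n,1}$, $\beta_n$, $\gamma_n$ separately modulo $8$ and then tabulate over the lcm of those periods; you correctly flag this as the ``main obstacle,'' since the pieces have incommensurate periods. The paper sidesteps this by staying at the level of generating functions: it writes $4+\sum_{n\ge 1}4nB_n^* x^n$ modulo $8$ as a single explicit rational function (the $2/(1-x)$ and $-x(1+x)(1+x^2)/(1-x)^5$ terms absorb your $n^2$ and $2a_{n,1}$ contributions), observes that its coefficients are periodic of period $24$, and then takes the even part, which collapses to a rational function with denominator $1-x^6$. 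This yields $8nB_{2n}^*\bmod 8$ as an explicit period-$6$ sequence, from which the valuation is immediate. So your approach is correct, but the generating-function packaging removes exactly the obstacle you identify; it is worth adopting for the write-up. (Minor point: your stated period $16$ for $2a_{n,1}\bmod 8$ is not quite right---it is $8$---but this does not affect the argument.)
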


\begin{proof}
It is convenient to remark at the beginning that the case of odd $n$ is
simple and is a consequence of Zagier's result on the periodicity of
the sequence $B^*_{2n+1}$.

Working modulo $8$ and using Proposition \ref{prop:Bmod8}, it follows that
$2 B_0 \equiv 2$, $2 B_1 \equiv -1$, $2 B_2 \equiv 3$ and for $k > 1$,
  \begin{align*}
    2 B_{2 k} \equiv 3 - 2 \left( - 1 \right)^k .
  \end{align*}

Note that $\binom{n + k}{2 k}  \frac{2 n}{n + k}$ is an integer. Thus it
follows from \eqref{bn-star} that
$4n B^*_n$ is a $2$-adic integer. For $n \geq 1$, these numbers reduce
modulo $8$ to
  \begin{align*}
    4 n B_n^{\ast} & = \sum_{k=0}^{\lf n/2 \rf} 
\binom{n + k}{2 k}  \frac{2 n}{n + k} 2 B_k\\
    & = - n^2 + \sum_{k=0}^{\lf n/2 \rf}
 \binom{n + 2 k}{4 k}  \frac{2 n}{n + 2 k} 2 B_{2k}\\
    & \equiv - n^2 + 2 - n \binom{n + 1}{3} + 
\sum_{k=0}^{\lf n/2 \rf}  \left[ 3 - 2 \left( -
    1 \right)^k \right] \binom{n + 2 k}{4 k}  \frac{2 n}{n + 2 k},
  \end{align*}
where in the second equality, the $- n^2$ term comes from 
the contribution of $B_1
  = - 1 / 2$, the only nonzero Bernoulli number of odd index. Also, for 
the final congruence, adjusting for the $k = 0$ and $k = 1$ cases in
  which $2 B_0 = 2$ and $2 B_2 = 1 / 3 \equiv 3$ respectively, produces the
  extra terms
  \begin{align*}
    \binom{n}{0}  \frac{2 n}{n}  \left( 2 B_0 - 1 \right) + \binom{n + 2}{4}
    \frac{2 n}{n + 2}  \left( 2 B_2 - 5 \right)
    \equiv 2 - n \binom{n + 1}{3} .
  \end{align*}
  Using Proposition \ref{prop:genfuns} modulo $8$ now gives
  \begin{align*}
    4 + \sum_{n = 1}^{\infty} 4 n B_n^{\ast} x^n
    & \equiv \frac{2}{1 - x} -
    \frac{x \left( 1 + x \right) \left( 1 + x^2 \right)}{\left( 1 - x
    \right)^5} + 3 \frac{\left( 1 - 2 x \right) \left( 2 - 2 x + x^2
    \right)}{\left( 1 - x + x^2 \right) \left( 1 - 3 x + x^2 \right)} \\
    &\quad - 2 \frac{2 - 6 x + 7 x^2 - 2 x^3}{1 - 4 x + 7 x^2 - 4 x^3 + x^4},
  \end{align*}
  where it is readily verified that the right-hand side is a rational function
  whose coefficients modulo $8$ are periodic with period $24$. The even part
  simplifies to
  \begin{align*}
    \sum_{n = 1}^{\infty} 8 n B_{2 n}^{\ast} x^{2n} \equiv \frac{x \left( 3 + x
    + 6 x^2 + x^3 + 3 x^4 + 4 x^5 \right)}{1 - x^6} .
  \end{align*}
  This implies
  \begin{align*}
    \nu_2 \left( 8 n B_{2 n}^{\ast} \right) = \left\{ \begin{array}{ll}
      0 & \text{if $(n,3)=1,$}\\
      1 & \text{if $n \equiv 3 \hspace{1em} \bmod 6, $}\\
      2 & \text{if $n \equiv 0 \hspace{1em} \bmod 6, $}
    \end{array} \right.
  \end{align*}
  which proves the claim.
\end{proof}

\section{The denominators of $B_{n}^{*}(j)$}
\label{sec-div4}
\label{sec:alphan}

The goal of this section is to establish Theorem \ref{thm:alphan}. It
states that the denominator of $B^*_n(j)$ does not depend on $j\in\ZZ$. The
proof begins with the identity
\begin{equation}
B_{n}^{*}(x+1) = B_{n}^{*}(x) + \frac{1}{2}U_{n-1} \left( \frac{x}{2} + 1
\right),
\label{cheby-zag0}
\end{equation}
\noindent
appearing as Lemma $10.2$ in \cite{dixit-2012a} which establishes a relation
between the Zagier polynomials and the Chebyshev polynomials of the 
second kind $U_{n}(x)$.

\begin{lemma}
  \label{lemma-UnZ}
For every half-integer $x$, the numbers $U_{n}(x)$ are integers.
\end{lemma}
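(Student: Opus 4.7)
The plan is a straightforward induction on $n$ using the standard three-term recurrence for the Chebyshev polynomials of the second kind, namely
\begin{equation*}
U_0(x) = 1, \quad U_1(x) = 2x, \quad U_{n+1}(x) = 2x\, U_n(x) - U_{n-1}(x).
\end{equation*}
The key observation is that the recurrence involves only the quantity $2x$ (and subtraction), so if $2x \in \mathbb{Z}$, then integrality is preserved along the recurrence.

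First I would write $x = m/2$ with $m \in \mathbb{Z}$, so that $2x = m$. The base cases are immediate: $U_0(x) = 1 \in \mathbb{Z}$ and $U_1(x) = m \in \mathbb{Z}$. For the inductive step, assuming $U_{n-1}(x), U_n(x) \in \mathbb{Z}$, the recurrence gives
\begin{equation*}
U_{n+1}(x) = m\, U_n(x) - U_{n-1}(x) \in \mathbb{Z},
\end{equation*}
which closes the induction.

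There is no real obstacle here; the lemma is essentially a tautology once one writes down the recurrence. The only thing worth remarking on is that the statement covers both genuine half-integers (such as $x = 1/2$) and ordinary integers (such as $x = 1$), since the definition of half-integer in the paper is ``$x$ such that $2x \in \mathbb{Z}$,'' and the proof treats both cases uniformly.
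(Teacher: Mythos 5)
Your proof is correct and rests on the same mechanism as the paper's first argument: the paper packages the induction as a tridiagonal determinant with entries $2x$ and $1$, which it verifies by expansion by minors to satisfy exactly the recurrence $U_{n+1}(x)=2x\,U_n(x)-U_{n-1}(x)$ that you use directly. Your version is, if anything, more direct; the paper also records an alternative proof via the generating function $\sum_{m\ge 0}U_m(p/2)\,t^m = 1/(1-pt+t^2)$.
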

\begin{proof}
  This is clear upon using the determinant representation
  \begin{align}\label{eq:Udet}
    U_n (x) = \left|\begin{array}{cccc}
      2 x & 1 &  & 0\\
      1 & 2 x & \ddots & \\
      & \ddots & \ddots & 1\\
      0 &  & 1 & 2 x
    \end{array}\right|
  \end{align}
\noindent
for the Chebyshev polynomial.  To verify \eqref{eq:Udet} denote the determinant
by $D_{n}(x)$. By expansion by minors, it follows that $D_{n+1}(x) = 2xD_{n}(x)
- D_{n-1}(x)$. The same recurrence is satisfied by $U_{n}(x)$ and a direct
computation gives $D_{n}(x) = U_{n}(x)$ for $n =1, \, 2$. Thus, $U_{n}(x) =
D_{n}(x)$ for all $n \in \mathbb{N}$.

\smallskip

An alternative proof employs the
generating function of the $U_{n}(x)$ polynomials
\begin{equation}
\sum_{k\ge0}U_{k}(x)t^{k}=\frac{1}{1-2xt+t^{2}}.
\end{equation}
Choosing $x=\frac{p}{2}$ with $p$ integer, it follows that
\begin{equation}
\sum_{m\ge0}U_{m}\left(\frac{p}{2}\right)t^{m}=\frac{1}{1-pt+t^{2}}=\frac{1}{1-t(p-t)}=\sum_{k\ge0}t^{k}(p-t)^{k}
\end{equation}
since by choosing $t$ small enough, $\vert t(p-t)\vert<1$. The
coefficient of $t^{m}$ in this sum, which is $U_{m}\left(\frac{p}{2}\right)$, is
clearly an integer.
\end{proof}

\begin{lemma}
\label{lem:den4}
The denominator of $B^{*}_{n}(j)$ is
independent of $j\in\ZZ$. In other words,
for all $j\in\ZZ$,
\begin{equation}
  \op{denom}B^*_n(j) = \op{denom}B^*_n.
\end{equation}
\end{lemma}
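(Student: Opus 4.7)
The plan is to combine the Chebyshev recurrence \eqref{cheby-zag0} with Lemma \ref{lemma-UnZ} and Theorem \ref{thm:2adic}. First I would telescope \eqref{cheby-zag0} in both directions to obtain, for every $j\in\ZZ$,
\begin{equation*}
B_n^*(j) - B_n^* \;=\; \frac{1}{2}\sum_{k} U_{n-1}\!\left(\frac{k}{2}+1\right),
\end{equation*}
where the sum runs over $k=0,1,\dots,j-1$ for $j>0$ and analogously (with a sign change and reversed limits) for $j<0$. Since each argument $\frac{k}{2}+1$ has the form $p/2$ with $p\in\ZZ$, Lemma \ref{lemma-UnZ} guarantees that every summand is an integer, and hence there exists $m=m(j,n)\in\ZZ$ with $B_n^*(j) = B_n^* + m/2$.

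Next I would argue that adding a half-integer to $B_n^*$ cannot alter its denominator, provided that $4\mid\alpha_n$. Writing $B_n^* = p/\alpha_n$ in lowest terms, so $\gcd(p,\alpha_n)=1$, one has
\begin{equation*}
B_n^*(j) \;=\; \frac{p + m\,(\alpha_n/2)}{\alpha_n},
\end{equation*}
and by Theorem \ref{thm:2adic} the integer $\alpha_n/2$ is even. A prime-by-prime check then yields $\gcd(p + m\,\alpha_n/2,\alpha_n)=1$: at any odd prime $\ell\mid\alpha_n$, the term $m\,\alpha_n/2$ is divisible by $\ell$ while $p$ is not; at $\ell=2$, the term $m\,\alpha_n/2$ is even while $p$ is odd. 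Hence $\op{denom}(B_n^*(j)) = \alpha_n$, which is precisely the statement.

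The main obstacle is really the input that $4\mid\alpha_n$. Mere divisibility of $\alpha_n$ by $2$ would not suffice: if $\nu_2(\alpha_n)=1$, then $\alpha_n/2$ would be odd, and for $m$ odd the $2$-adic valuation of the numerator $p + m\,\alpha_n/2$ could be positive, causing the denominator to collapse by a factor of $2$. This is exactly why Question \ref{q:main} must be resolved first, via the delicate modular analysis of Section \ref{sec:2adic}. With Theorem \ref{thm:2adic} in hand, the present lemma is essentially bookkeeping, and Theorem \ref{thm:alphan} follows immediately.
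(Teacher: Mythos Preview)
Your proof is correct and follows essentially the same route as the paper: use the Chebyshev recurrence \eqref{cheby-zag0} together with Lemma~\ref{lemma-UnZ} to show that $B_n^*(j)-B_n^*$ is a half-integer, then invoke $4\mid\alpha_n$ from Theorem~\ref{thm:2adic} and check prime-by-prime that no cancellation occurs. The paper presents this as an induction on $j$ (one step at a time) rather than telescoping, but the arithmetic reduction $(p+m\,\alpha_n/2)/\alpha_n$ and the coprimality check are identical in substance.
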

\begin{proof}
Assume $j > 0$. It is a consequence of Theorem \ref{thm:2adic} that the 
denominator of $B_{n}^{*}$
 is divisible by $4$, and thus is $4t$ for some $t\in\ZZ$.

Assume, therefore, by induction that the denominator of $B_{n}^{*}(j)$ is $4t$ as well; that is, in reduced form
\begin{equation}
B_{n}^{*}(j) = \frac{x}{4t},
\end{equation}
\noindent
with $x = x(j)$ an odd integer. The
identity \eqref{cheby-zag0} coupled with Lemma \ref{lem:den4} gives
\begin{equation}
B_{n}^{*}(j+1) = \frac{x}{4t} + \frac{w}{2} = \frac{x+2wt}{4t},
\label{new-sum}
\end{equation}
\noindent
with $w \in \mathbb{Z}$. The last fraction in \eqref{new-sum} is also in
reduced form. Indeed, the numerator is odd so there is no cancellation of the
factor $4$ and if $p$ is an odd prime that divides both $x+2wt$ and $4t$, then
it divides $\gcd(x,t) = 1$. Therefore $B_{n}^{*}(j+1)$ also has denominator
$4t$, the denominator of $B_{n}^{*}$. This proof easily adapts to the case when $j$ is negative.
\end{proof}

\section{An asymptotic expansion related to the numbers $B^*_n$}

The generating function
\begin{align*}
  \sum_{n=1}^\infty B^*_n(x) z^n = -\frac12 \log z - \frac12 \psi
\left( z+1/z-1-x \right)
\end{align*}
appears as Theorem 5.1 of \cite{dixit-2012a}. Here $\psi(z)$ is the
digamma function
\begin{equation}
\psi(z) = \frac{\Gamma'(z)}{\Gamma(z)},
\end{equation}
the logarithmic derivative of the gamma function. The asymptotic
expansion for the auxiliary function
\begin{equation}
V(z) = \log z + \psi \left( z + \frac{1}{z} \right)
\label{V-def}
\end{equation}
\noindent
as $z \to 0$ in the form
\begin{equation}
V(z) \sim \sum_{n=0}^{\infty} v_{n} z^{n}
\label{V-asym}
\end{equation}
will yield a relation between the numbers $B^*_n$ and the sequence
$v_n$ in \eqref{V-asym}.

%

The value of $\alpha_{2n+1}$ has been established in \cite{dixit-2012a}.

\begin{theorem}
\label{thm-alphaodd}
For $j \in \mathbb{Z}$, the coefficients $4B_{2n+1}^{*}(j)$ are odd
integers. This gives
\begin{equation}
\alpha_{2n+1} = 4.
\end{equation}
\end{theorem}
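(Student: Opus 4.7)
The plan is to combine Zagier's periodicity result with the Chebyshev recursion \eqref{cheby-zag0} and the integrality statement of Lemma \ref{lemma-UnZ}, reducing the claim to an induction on $j$ with base case $j = 0$.

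For the base case $j = 0$, I would invoke Zagier's theorem that the sequence $\{B^{*}_{2n+1}\}$ is $6$-periodic in $n$. Tabulating the six values (the first several terms $B_1^{*} = 3/4$, $B_3^{*} = -1/4$, $B_5^{*} = -1/4$, $B_7^{*} = 1/4$, $B_9^{*} = 1/4$ and the sixth value, which the periodicity forces) shows that each $B_{2n+1}^{*}$ is of the form $(\text{odd integer})/4$, so $4B_{2n+1}^{*}$ is an odd integer. (If one wished to avoid citing Zagier, the same congruence information can be extracted from the rational generating function computed modulo $8$ in the proof of Theorem~\ref{thm:2adic2}, restricted to odd indices.)

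For the inductive step, specialize \eqref{cheby-zag0} to odd index $n = 2m+1$:
\begin{equation*}
  4 B_{2m+1}^{*}(j+1) = 4 B_{2m+1}^{*}(j) + 2\, U_{2m}\!\left(\tfrac{j}{2} + 1\right).
\end{equation*}
For any integer $j$, the argument $\tfrac{j}{2}+1$ is either an integer or a half-integer, so Lemma~\ref{lemma-UnZ} yields $U_{2m}(\tfrac{j}{2}+1) \in \mathbb{Z}$. Therefore the right-hand side increments $4 B_{2m+1}^{*}(j)$ by an even integer. Combined with the base case, induction on $j \geq 0$ (and the analogous induction on $-j \geq 0$ using the same identity solved for $B^{*}_{2m+1}(j)$ in terms of $B^{*}_{2m+1}(j+1)$) shows that $4 B_{2n+1}^{*}(j)$ is an odd integer for every $j \in \mathbb{Z}$.

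Finally, once $4 B_{2n+1}^{*}(j)$ is odd, the fraction $B_{2n+1}^{*}(j) = (\text{odd})/4$ is already in reduced form, so $\alpha_{2n+1,j} = 4$ for every $j \in \mathbb{Z}$; taking $j = 0$ gives $\alpha_{2n+1} = 4$. The only substantive obstacle is the base case: everything downstream is a one-line inductive propagation via the Chebyshev identity, but establishing oddness of $4 B_{2n+1}^{*}$ itself requires either Zagier's $6$-periodicity or an independent mod-$2$ analysis of the defining sum \eqref{bn-star}.
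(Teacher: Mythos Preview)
Your proof is correct. Note, however, that this paper does not supply its own proof of Theorem~\ref{thm-alphaodd}: the statement is introduced with the remark that ``the value of $\alpha_{2n+1}$ has been established in \cite{dixit-2012a}'' (the companion Part~I paper), so there is no in-paper argument to compare against directly.

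That said, your approach is exactly in the spirit of the tools this paper develops. The inductive propagation via \eqref{cheby-zag0} and Lemma~\ref{lemma-UnZ} is the same mechanism used in the proof of Lemma~\ref{lem:den4} to show that the denominator of $B_n^*(j)$ is constant in $j$; you are specializing that argument to odd index and tracking the parity of the numerator rather than the full denominator. Your base case, via Zagier's $6$-periodicity of $\{B_{2n+1}^*\}$, is the natural choice and is cited throughout the paper as an established fact. One minor wording issue: periodicity does not ``force'' the sixth value $B_{11}^*$; one must compute it separately (directly from the definition, say), verify that $4B_{11}^*$ is odd, and \emph{then} invoke periodicity to cover all $n$.
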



The generating function for the much more involved case of $\alpha_{2n}$ is
\begin{equation*}
\sum_{n=1}^{\infty} B_{2n}^{*}(j) z^{2n}  =
- \frac{1}{2} \log z -
\frac{1}{4} \psi \left( z + \frac{1}{z} + 2 + j \right) -
\frac{1}{4} \psi \left( z + \frac{1}{z} - 1 - j \right).
\end{equation*}
This was given in Corollary $5.3$ of \cite{dixit-2012a} and can be converted to
\begin{eqnarray*}
\sum_{n=1}^{\infty} B_{2n}^{*}(j) z^{2n} & = &
- \frac{1}{2} \log z -
\frac{1}{2} \psi \left( z + \frac{1}{z} \right)  \\
& & - \frac{1}{4} \sum_{r=0}^{j+1}
\left[ \frac{z}{z^{2} + rz + 1 } + \frac{z}{z^{2}-rz+1} \right] +
\frac{z}{4(z^{2}+1)}
\end{eqnarray*}
\noindent
using 
\begin{equation}
\psi(u+k) =  \psi(u) + \sum_{r=0}^{k-1} \frac{1}{u+r}.
\label{itera-psi}
\end{equation}
\noindent
Now use the function $V(z)$ defined in
\eqref{V-def} to obtain
\begin{equation}\label{eq:genfunBseven}
\sum_{n=1}^{\infty} B_{2n}^{*}(0)z^{2n} = - \frac{1}{2} V(z) -
\frac{z}{4} \left[ \frac{1}{z^{2}+1} + \frac{2(1-z^{4})}{1-z^{6}} \right].
\end{equation}

This identity shows that Question \ref{q:main} is indeed equivalent to the
rational numbers $v_{2n}$ having even denominators.

A direct symbolic computation gives the values of the first few $v_{n}$ as
\begin{equation}
\left\{ 0, \, - \frac{1}{2}, \, \frac{11}{12}, \, \frac{1}{2}, \,
- \frac{13}{40}, \, - \frac{1}{2}, \, \frac{29}{630}, \, \frac{1}{2},
\, \frac{109}{560}, \, -\frac{1}{2}, - \frac{67}{132}, \, \frac{1}{2},
\, \frac{6571}{6006}  \right\}.
\end{equation}
\noindent
This data suggests that $|v_{n}| = 1/2$ for $n$ odd but no simple pattern is
observed for $n$ even.

\section{The use of bounds on $\psi(z)$}
\label{sec-bounds}

The first approach to the computation of the
coefficients $v_{n}$ is to use bounds for the digamma function
$\psi(z)$ and its derivatives that exist in the literature. This process
succeeds only for small values of $n$.

\begin{proposition}
\label{value-a0}
The function $V(z)$ satisfies
\begin{equation}
\lim\limits_{z \to 0^{+}} V(z) = 0,
\end{equation}
\noindent
that is, $v_{0} = 0$.
\end{proposition}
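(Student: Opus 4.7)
The plan is to exploit the classical asymptotic behavior of the digamma function at infinity, namely
\begin{equation*}
\psi(w) = \log w - \frac{1}{2w} + O\!\left(\frac{1}{w^{2}}\right) \quad \text{as } w \to \infty,
\end{equation*}
which is standard (see, e.g., \cite{olver-2010a}). As $z \to 0^{+}$, the argument $w = z + 1/z$ tends to $\infty$, so this asymptotic applies.

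The first step is to substitute $w = z + 1/z$ and rewrite $\log w$ as
\begin{equation*}
\log\!\left(z + \frac{1}{z}\right) = \log\!\left(\frac{z^{2}+1}{z}\right) = \log(z^{2}+1) - \log z,
\end{equation*}
valid for $z > 0$. Then
\begin{equation*}
V(z) = \log z + \psi\!\left(z + \frac{1}{z}\right) = \log z + \log(z^{2}+1) - \log z + O(z) = \log(z^{2}+1) + O(z).
\end{equation*}
The second step is simply to take the limit $z \to 0^{+}$; both terms on the right tend to $0$, giving $v_{0} = 0$.

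There is no serious obstacle: the argument reduces to the cancellation of the $\log z$ singularity in $V(z)$ against the $-\log z$ arising from $\log(z + 1/z)$ in the leading term of the digamma asymptotic. The only care needed is to confirm that $z + 1/z$ stays away from the poles of $\psi$ for $z$ in a right neighbourhood of $0$ (which is immediate since $z + 1/z > 0$), so that $\psi(z + 1/z)$ is a genuine real-analytic function of $z$ there and the asymptotic expansion may be applied term-by-term.
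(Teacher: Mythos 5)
Your proof is correct. The essential point --- that the leading term $\log w$ in the large-$w$ behaviour of $\psi(w)$, evaluated at $w = z + 1/z$, contributes $-\log z + \log(z^{2}+1)$ and thereby cancels the $\log z$ singularity in $V(z)$ --- is exactly the mechanism the paper relies on. The difference is only in how the approximation $\psi(w) \approx \log w$ is certified: you invoke the standard asymptotic expansion $\psi(w) = \log w - \tfrac{1}{2w} + O(w^{-2})$ (noting correctly that $w = z + 1/z \geq 2$ stays away from the poles, so the error term is $O(z^{2})$), whereas the paper uses Alzer's explicit two-sided inequality $\tfrac{1}{2z} < \log z - \psi(z) < \tfrac{1}{z}$ to squeeze $V(z)$ between $\log(z^{2}+1) - \tfrac{z}{z^{2}+1}$ and $\log(z^{2}+1) - \tfrac{z}{2(z^{2}+1)}$. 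The paper's choice is deliberate --- that section is an illustration of what can (and cannot) be extracted from published bounds on $\psi$ and its derivatives, and the squeeze avoids any appeal to asymptotic series --- but your route is the more standard one and is in fact the method the paper itself adopts later (Section on the asymptotic method) to compute all the coefficients $v_{n}$ at once, of which $v_{0}=0$ is the first instance.
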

\begin{proof}
The inequality
\begin{equation}
\frac{1}{2z} < \log z - \psi(z) < \frac{1}{z}
\label{ineq-alzer1}
\end{equation}
\noindent
was established by H. Alzer \cite{alzer-1997a}. This gives
\begin{equation}
\log(z^{2}+1) - \frac{z}{z^{2}+1} < V(z) < \log(z^{2}+1) - \frac{z}{2(z^{2}+1)}
\end{equation}
\noindent
and the result follows from here. The inequality \eqref{ineq-alzer1} has been
improved by F. Qi and B. Guo \cite{guo-2010a} to
\begin{equation}
\log \left( z + \frac{1}{2} \right)  - \frac{1}{z} < \psi(z) <
\log \left( z + e^{-\gamma} \right) - \frac{1}{z}.
\end{equation}
\end{proof}

The next statement shows the computation of $v_{1}$. It requires sharper
bounds on the derivative $\psi'(x)$. The proof presented below should be seen
as a sign that a different procedure is desirable for the evaluation of
general $v_{n}$.

\begin{proposition}
\label{value-a1}
The function $V(z)$ satisfies
\begin{equation}
\lim\limits_{z \to 0^{+}} V'(z) =  - \frac{1}{2},
\end{equation}
\noindent
that is, $v_{1} = -1/2$.
\end{proposition}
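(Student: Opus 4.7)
The plan is to differentiate $V$ explicitly, reduce the problem to an asymptotic estimate for $\psi'$ at infinity, and then pass to the limit. Differentiating \eqref{V-def} gives
\begin{equation*}
V'(z) = \frac{1}{z} + \left(1 - \frac{1}{z^2}\right)\psi'\!\left(z + \frac{1}{z}\right).
\end{equation*}
As $z \to 0^+$, the argument $u := z + 1/z$ tends to $+\infty$, so I would substitute a sharp asymptotic estimate for $\psi'(u)$ and track two leading terms.

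The naive substitution $\psi'(u) \approx 1/u = z/(z^2+1)$ is insufficient: it only yields
\begin{equation*}
\frac{1}{z} + \left(1 - \frac{1}{z^2}\right)\frac{z}{z^2+1} = \frac{2z}{z^2+1},
\end{equation*}
which tends to $0$ rather than $-1/2$. The point is that the factor $1 - 1/z^2$ blows up like $-1/z^2$, so it resonates with the second-order term in the expansion of $\psi'$. I would therefore invoke a two-sided inequality of the shape
\begin{equation*}
\frac{1}{u} + \frac{1}{2u^2} < \psi'(u) < \frac{1}{u} + \frac{1}{2u^2} + \frac{c}{u^3}
\end{equation*}
for $u > 0$ (for some constant $c$, with the precise sharp form due to Alzer and collaborators, or derivable from the Binet integral representation of $\psi$). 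Multiplying by $1 - 1/z^2$ and adding $1/z$, the $1/u$ contribution collapses to $2z/(z^2+1)$ as before, while the $1/(2u^2) = z^2/(2(z^2+1)^2)$ contribution yields
\begin{equation*}
\left(1 - \frac{1}{z^2}\right)\cdot \frac{z^2}{2(z^2+1)^2} = \frac{z^2-1}{2(z^2+1)^2},
\end{equation*}
and the $O(1/u^3)$ tail is $O(z^3)$, hence negligible. Summing the pieces,
\begin{equation*}
V'(z) = \frac{2z}{z^2+1} + \frac{z^2-1}{2(z^2+1)^2} + O(z),
\end{equation*}
and letting $z \to 0^+$ gives $V'(z) \to -1/2$, which is $v_1$.

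The only real obstacle is locating, or deriving, a two-term sharp inequality for $\psi'$: the single-term bound used in Proposition~\ref{value-a0} is not accurate enough here. Once such an inequality is in hand (and this is standard material; as the authors remark, the method becomes increasingly cumbersome for $v_n$ with $n \ge 2$), the rest is routine algebra in $z$ and $1/z$.
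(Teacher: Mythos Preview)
Your approach is essentially the same as the paper's: differentiate $V$, reduce to sharp two-sided bounds on $\psi'$ at infinity, and pass to the limit. Both you and the paper use the lower bound $\psi'(u) > 1/u + 1/(2u^2)$ (from Guo--Chen--Qi) to produce the upper bound $V'(z) < (4z^3 + z^2 + 4z - 1)/(2(1+z^2)^2) \to -1/2$.

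The one genuine difference is in the \emph{other} direction. The paper notes that the companion bound $\psi'(u) < 1/u + 1/u^2$ is too crude (it would give $V'(z) > -1 + o(1)$, not $-1/2$), and instead invokes $\psi'(u) < e^{1/u} - 1$ from Guo--Qi. You instead propose a three-term asymptotic bound $\psi'(u) < 1/u + 1/(2u^2) + c/u^3$, which is more systematic and does exist (the enveloping property of the polygamma asymptotic series gives it with $c = 1/6$). Both choices work; yours has the advantage of making the mechanism transparent, while the paper's choice is a ready-made inequality from the literature. One small slip in your write-up: the $O(1/u^3)$ tail, \emph{after} multiplication by $1 - 1/z^2 \sim -1/z^2$, is $O(z)$, not $O(z^3)$; your final displayed formula has the correct $O(z)$, so this is just a wording issue in the intermediate sentence.
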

\begin{proof}
The inequalities
\begin{equation}
\frac{(k-1)!}{z^{k}} + \frac{k!}{2z^{k+1}} < (-1)^{k+1}\psi^{(k)}(z) <
\frac{(k-1)!}{z^{k}} + \frac{k!}{z^{k+1}}, \quad \text{ for }z > 0,
\end{equation}
\noindent
are established in \cite{guo-2006a}. In the special case $k=1$ they produce
\begin{equation}
\frac{1}{z} + \frac{1}{2z^{2}} < \psi'(z) < \frac{1}{z} + \frac{1}{z^{2}}.
\end{equation}
\noindent
It turns out that the lower bound gives a sharp result for $V'(z)$ as
$z \to 0^{+}$. Indeed,
\begin{equation}
V'(z) = \left( 1 - \frac{1}{z^{2}} \right) \psi' \left( z + \frac{1}{z} \right)
+ \frac{1}{z} < \frac{4z^{3} + z^{2} + 4z-1}{2(1+z^{2})^{2}}.
\label{ineq-1}
\end{equation}
\noindent
The reader should check that the upper bound does not give useful information.
Instead the inequality
\begin{equation}
\psi'(z) < e^{1/z} - 1,
\end{equation}
\noindent
established in \cite{guo-2009a}, is used to produce
\begin{equation}
V'(z) > \left( \frac{z^{2}-1}{z^{2}} \right)
\left[ \text{exp} \left( \frac{z}{z^{2}+1} \right) -1 \right] + \frac{1}{z}.
\label{ineq-2}
\end{equation}
\noindent
The result now follows by letting $z \to 0$ in \eqref{ineq-1} and
\eqref{ineq-2}.
\end{proof}

The computation of $v_{n}$ by this procedure requires bounds on
all derivatives of $\psi(x)$. The examples discussed above shows that this is
not an efficient procedure. The next section presents an alternative.

\section{The computation of $v_{n}$ by umbral calculus}
\label{sec-auxiliary}

The goal of this section is to compute the coefficients $v_{n}$ in
the expansion \eqref{V-asym} by the
techniques of umbral calculus. The reader is referred to
\cite{dixit-2012a} for an introduction to these techniques and for the
statements used in this section.

Introduce the auxiliary function
\begin{equation}
F(x) = \psi \left( \frac{1}{x} \right) + \log x,
\end{equation}
\noindent
for $x> 0$ and observe that
\begin{equation}
V(z) = F \left( \frac{z}{z^{2}+1} \right) + \log(z^{2}+1).
\end{equation}

\begin{theorem}
\label{asym-F}
The function $F(x)$ admits the asymptotic expansion
\begin{equation}
F(x) \sim  \sum_{n=1}^{\infty} \frac{(-1)^{n+1}B_{n}}{n} x^{n}.
\label{series-F}
\end{equation}
\end{theorem}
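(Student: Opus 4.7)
The plan is to reduce the claim to the classical Poincar\'e asymptotic expansion of the digamma function near infinity and then reorganize the resulting series.

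Recall the standard expansion
\begin{equation*}
\psi(z) \sim \log z - \frac{1}{2z} - \sum_{k=1}^{\infty} \frac{B_{2k}}{2k\, z^{2k}}, \qquad z \to \infty,
\end{equation*}
valid in any sector $|\arg z| \leq \pi - \delta$; this is standard and can be obtained, for example, from Binet's second integral representation for $\log \Gamma(z)$ followed by differentiation, or from the Euler--Maclaurin summation applied to the series $\psi(z) = -\gamma + \sum_{k \geq 0} \bigl( \tfrac{1}{k+1} - \tfrac{1}{z+k} \bigr)$. The proof I would write simply cites this expansion.

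Substituting $z = 1/x$ (which sends $z \to \infty$ to $x \to 0^{+}$) gives
\begin{equation*}
\psi(1/x) \sim -\log x - \frac{x}{2} - \sum_{k=1}^{\infty} \frac{B_{2k}}{2k}\, x^{2k},
\end{equation*}
so that
\begin{equation*}
F(x) = \psi(1/x) + \log x \sim -\frac{x}{2} - \sum_{k=1}^{\infty} \frac{B_{2k}}{2k}\, x^{2k}.
\end{equation*}

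It remains to check that the right-hand side coincides with the series in \eqref{series-F}. Using $B_{1} = -1/2$ and the vanishing of $B_{n}$ for odd $n \geq 3$, one sees that in $\sum_{n \geq 1} \frac{(-1)^{n+1} B_{n}}{n} x^{n}$ the $n=1$ term equals $-x/2$, the odd terms with $n \geq 3$ vanish, and the even terms $n = 2k$ contribute $\frac{(-1)^{2k+1} B_{2k}}{2k} x^{2k} = -\frac{B_{2k}}{2k} x^{2k}$. These match the derived expansion exactly, proving the theorem.

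There is no real obstacle here: the only nontrivial ingredient is the classical asymptotic expansion for $\psi$, which is quoted from the literature, and everything else is bookkeeping. If a self-contained derivation were preferred, the mildly harder step would be to justify the asymptotic expansion for $\psi$ itself, but citing a standard reference (e.g.~\cite{olver-2010a}) keeps the argument short.
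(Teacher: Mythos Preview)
Your proof is correct. However, it takes a different route from the paper's own argument. The paper works from the integral representation
\[
\psi(z) = \log z + \int_{0}^{\infty} e^{-tz}\left(\frac{1}{t} - \frac{1}{1-e^{-t}}\right)\,dt,
\]
substitutes $z = 1/x$, changes variables $s = t/x$, and then expands the integrand using the generating function for Bernoulli polynomials to obtain $F(x) \sim -\sum_{n\geq 1} \frac{B_n(1)}{n} x^n$; the identity $B_n(1) = (-1)^n B_n$ finishes the job. In contrast, you simply quote the classical Poincar\'e expansion of $\psi(z)$ at infinity (which is equation~\eqref{asym-psi5} in the paper, used later in Section~\ref{sec-glasser}) and match coefficients. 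Your argument is shorter and entirely legitimate; the paper's version has the minor advantage of being self-contained, effectively rederiving the digamma asymptotics rather than importing them.
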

\begin{proof}
The integral representation
\begin{equation}
\psi(z) = \log z + \int_{0}^{\infty} e^{-tz}
\left( \frac{1}{t} - \frac{1}{1-e^{-t}} \right) \, dt
\label{int-rep1}
\end{equation}
\noindent
produces
\begin{equation}
F(x) =  \int_{0}^{\infty} e^{-t/x}
\left( \frac{1}{t} - \frac{1}{1-e^{-t}} \right) \, dt.
\label{int-rep2}
\end{equation}
\noindent
Set $s = t/x$ to obtain
\begin{equation}
F(x) =  \int_{0}^{\infty} \frac{e^{-s}}{s}
\left( 1 - \frac{sxe^{sx}}{e^{sx}-1} \right) \, ds.
\label{int-rep3}
\end{equation}
\noindent
The generating function for the Bernoulli polynomials
\begin{equation}
\frac{te^{xt}}{e^{t}-1} = \sum_{n=0}^{\infty} B_{n}(x) \frac{t^{n}}{n!}
\end{equation}
\noindent
yields
\begin{eqnarray*}
F(x) & = &   \int_{0}^{\infty} \frac{e^{-s}}{s}
\left( 1 - \sum_{n=0}^{\infty} \frac{B_{n}(1) (sx)^{n}}{n!}  \right) \, ds \\
& = &   -\int_{0}^{\infty} \frac{e^{-s}}{s}
\sum_{n=1}^{\infty} \frac{B_{n}(1) (sx)^{n}}{n!} \, ds \\
& = &   -\sum_{n=1}^{\infty} \frac{B_{n}(1)x^{n}}{n!}
\int_{0}^{\infty} e^{-s}s^{n-1} \, ds.
\end{eqnarray*}
\noindent
The result now follows from $B_{n}(1) = (-1)^{n}B_{n}$.
\end{proof}

\begin{note}
The asymptotic behavior
\begin{equation}
|B_{2n} | \sim 4 \sqrt{\pi n} \left( \frac{n}{\pi e } \right)^{2n}
\end{equation}
\noindent
shows that the series in \eqref{series-F} does not converge for $x \neq 0$.
\end{note}

The result in Theorem \ref{asym-F} is now transformed using the umbral method
described in \cite{dixit-2012a}. 
The essential point is the introduction of an umbra
$\mathfrak{B}$ for the Bernoulli polynomials  $B_{n}(x)$ by
the generating function
\begin{equation}
\text{eval} \left\{ \text{exp}(t \mathfrak{B}(x)) \right\} =
\frac{te^{xt}}{e^{t}-1}
\end{equation}
\noindent
The rules
$\text{eval} \left(\mathfrak{B}^{n} \right) =  B_{n}$
and
$\text{eval} \left(\mathfrak{B}(x) \right) =  \text{eval} \left\{ x +
\mathfrak{B} \right\}$ are
useful in converting identities  involving Bernoulli polynomials.

\begin{theorem}
The coefficients $v_{n}$ in the expansion  \eqref{V-asym}
are given by
\begin{equation}
v_{n} = \sum_{k=0}^{\lf n/2 \rf} (-1)^{n-k+1}
\frac{\binom{n-k}{k}}{n-k} B_{n-2k}.
\label{form1-vn}
\end{equation}
\end{theorem}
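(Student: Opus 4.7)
The plan is to combine the decomposition $V(z) = F(z/(z^2+1)) + \log(z^2+1)$, noted just before Theorem \ref{asym-F}, with that theorem itself, interpreting everything through the umbra $\mathfrak{B}$ so that the coefficients drop out cleanly. The first step is to rewrite the conclusion of Theorem \ref{asym-F} in the compact umbral form
\begin{equation*}
F(x) \sim \text{eval} \log(1 + \mathfrak{B} x),
\end{equation*}
which is immediate from $\sum_{n \geq 1} (-1)^{n+1} u^n/n = \log(1+u)$ together with $\text{eval}(\mathfrak{B}^n) = B_n$.

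Substituting $x = z/(z^2+1)$ and folding in the $\log(z^2+1)$ term then produces the key collapse
\begin{equation*}
V(z) \sim \text{eval} \log\left(1 + \frac{\mathfrak{B} z}{z^2+1}\right) + \log(z^2+1) = \text{eval} \log(1 + \mathfrak{B} z + z^2),
\end{equation*}
since $\log(z^2+1)$ is a scalar (invariant under eval) and the logarithm of the quotient $(1 + \mathfrak{B} z + z^2)/(z^2+1)$ splits in the formal power series ring in $z$. This single-logarithm representation is what makes the coefficient extraction tractable.

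To finish, I would expand $\log(1 + \mathfrak{B} z + z^2) = \sum_{m \geq 1} \frac{(-1)^{m+1}}{m}(\mathfrak{B} z + z^2)^m$ and apply the binomial theorem to the inner power to obtain $(\mathfrak{B} z + z^2)^m = \sum_{j=0}^{m} \binom{m}{j} \mathfrak{B}^{m-j} z^{m+j}$. Extracting the coefficient of $z^n$ via the substitution $k = j$, $m = n-k$ (so the range $0 \leq j \leq m$ becomes $0 \leq k \leq \lfloor n/2 \rfloor$) and applying $\text{eval}(\mathfrak{B}^{n-2k}) = B_{n-2k}$ delivers the stated formula \eqref{form1-vn}. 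The only delicate point is justifying the umbral manipulations as identities of formal power series in $z$; however, since each $[z^n]$ receives contributions from only finitely many terms, no convergence issues arise, and this is really a consistency check rather than a serious obstacle.
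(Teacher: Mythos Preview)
Your proposal is correct and follows essentially the same route as the paper's proof: both rewrite Theorem \ref{asym-F} umbrally as $F(x) \sim \text{eval}\log(1+\mathfrak{B}x)$, combine with $V(z)=F(z/(z^2+1))+\log(z^2+1)$ to collapse to $\text{eval}\log(1+\mathfrak{B}z+z^2)$, expand the logarithm, and read off the coefficient of $z^n$. The only cosmetic difference is that the paper factors $\mathfrak{B}z+z^2=z(z+\mathfrak{B})$ and passes through the Bernoulli polynomial via $\text{eval}((x+\mathfrak{B})^r)=B_r(x)=\sum_k\binom{r}{k}B_{r-k}x^k$ before reindexing, whereas you apply the binomial theorem to $(\mathfrak{B}z+z^2)^m$ directly; these are the same computation.
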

\begin{proof}
The result of Theorem \ref{asym-F} can be written as
\begin{eqnarray}
F(x) & = & \sum_{n=1}^{\infty} \frac{(-1)^{n+1}}{n} (x \mathfrak{B})^{n} \\
 & = & \log( 1 + x \mathfrak{B} ).  \nonumber
\end{eqnarray}
Then
\begin{eqnarray*}
V(x) & = & F \left( \frac{x}{x^{2}+1} \right) + \log(x^{2}+1) \\
 & = & \text{eval} \left(
\log \left( 1 + \frac{x \mathfrak{B}}{x^{2}+1} \right) +
\log(x^{2}+1) \right) \\
& = & \text{eval} \left( \log \left( x^{2} + 1 + x \mathfrak{B} \right)
\right) \\
& = & \text{eval} \left(
\sum_{r=1}^{\infty} \frac{(-1)^{r+1}}{r} x^{r} ( x + \mathfrak{B})^{r}
\right) \\
& = & \sum_{r=1}^{\infty} \frac{(-1)^{r+1}}{r} x^{r} B_{r}(x) \\
& = & \sum_{r=1}^{\infty} \frac{(-1)^{r+1} x^{r}}{r}
\sum_{k=0}^{r} \binom{r}{k} B_{r-k}x^{k}.
\end{eqnarray*}
\noindent
Now let $n = r+k$ and invert the order of summation to obtain the result.
\end{proof}

Separating the expression for the coefficients $v_{n}$ given in
\eqref{form1-vn} according to the parity of $n$, simplifies the result.

\begin{corollary}
\label{thm-vnform}
The coefficients $v_{n}$ in \eqref{V-asym} are given by
\begin{eqnarray}
v_{2n-1} & = & \frac{(-1)^{n}}{2}, \label{form1-vna0}  \\
& &  \nonumber \\
v_{2n} & = & (-1)^{n+1} \left[
\frac{1}{n} + \sum_{k=1}^{n} (-1)^{k}
\binom{n+k-1}{n-k} \frac{B_{2k}}{2k} \right]. \label{form1-vna1}
\end{eqnarray}
\end{corollary}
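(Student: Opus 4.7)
The plan is to substitute the two parities of $n$ into the formula \eqref{form1-vn} and simplify using the classical fact that $B_r = 0$ for odd $r \geq 3$ together with $B_1 = -\tfrac{1}{2}$.

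For the odd case $n = 2m - 1$, every index $n - 2k = 2m - 1 - 2k$ is odd, so $B_{n-2k}$ vanishes except when $n - 2k = 1$, i.e., $k = m - 1$. That single surviving term carries $\binom{n-k}{k} = \binom{m}{m-1} = m$, denominator $n - k = m$, sign $(-1)^{n-k+1} = (-1)^{m+1}$, and Bernoulli value $B_1 = -\tfrac{1}{2}$; multiplying these gives $v_{2m-1} = (-1)^m/2$, which is the first claim.

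For the even case $n = 2m$, set $j = m - k$ so that the index $n - 2k$ becomes $2j$, the upper binomial entry $n - k$ becomes $m + j$, the lower becomes $m - j$, and the sign $(-1)^{n - k + 1}$ becomes $(-1)^{m+j+1}$. This rewrites
\begin{equation*}
v_{2m} = (-1)^{m+1} \sum_{j=0}^{m} (-1)^{j} \frac{\binom{m+j}{m-j}}{m+j} B_{2j}.
\end{equation*}
The $j = 0$ term contributes $1/m$. For $j \geq 1$, the straightforward factorial identity
\begin{equation*}
\frac{\binom{m+j}{m-j}}{m+j} = \frac{(m+j-1)!}{(m-j)!\,(2j)!} = \binom{m+j-1}{m-j}\frac{1}{2j}
\end{equation*}
converts the summand into $(-1)^j \binom{m+j-1}{m-j} B_{2j}/(2j)$, matching the form claimed in \eqref{form1-vna1} once we rename $m \to n$ and $j \to k$.

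There is no real obstacle here; the computation is a bookkeeping exercise. The only point to be careful about is the handling of the $B_1$ contribution in the odd case (it must not inadvertently appear in the even case, which is confirmed by the parity of $n - 2k$), and the reindexing $j = m - k$ together with the binomial identity above, which should be stated explicitly so that the reader can verify that the two displayed forms agree.
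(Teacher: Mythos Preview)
Your proof is correct and follows exactly the approach the paper intends: separate the formula \eqref{form1-vn} according to the parity of $n$, use $B_r=0$ for odd $r\ge 3$ and $B_1=-\tfrac12$ in the odd case, and reindex via $j=m-k$ together with the identity $\binom{m+j}{m-j}/(m+j)=\binom{m+j-1}{m-j}/(2j)$ in the even case. The paper states the corollary without spelling out these steps, so your write-up in fact supplies the missing details.
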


\section{Properties of Zagier polynomials give the expression for $v_{n}$}
\label{sec-zagier}

This section presents a proof of the expressions for $v_{n}$ given in
Corollary \ref{thm-vnform} by using properties of the Zagier polynomials
established in \cite{dixit-2012a}.

Theorem $5.1$ in \cite{dixit-2012a} gives the generating function of the
Zagier polynomials
\begin{equation}
\sum_{n=1}^{\infty} B_{n}^{*}(x)z^{n} = - \frac{\log z}{2} -
\frac{1}{2} \psi \left( z + \frac{1}{z} - 1 - x \right)
\end{equation}
\noindent
that for $x=-1$ yields
\begin{equation}
\sum_{n=1}^{\infty} B_{n}^{*}(-1)z^{n} = - \frac{\log z}{2} -
\frac{1}{2} \psi \left( z + \frac{1}{z}  \right).
\end{equation}
\noindent
Comparing with the asymptotics for $V(z)$ given in
\eqref{V-asym} gives the next statement.

\begin{proposition}
The coefficients $v_{n}$ are given by
\begin{equation}
v_{n} = - 2 B_{n}^{*}(-1).
\label{value-vn}
\end{equation}
\end{proposition}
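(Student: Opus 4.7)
The plan is essentially a one-line calculation: specialize the generating function for the Zagier polynomials from \cite{dixit-2012a} at $x=-1$ and match it against the defining asymptotic expansion \eqref{V-asym} of $V(z)$. First, I would recall Theorem $5.1$ of \cite{dixit-2012a}, which gives
$$\sum_{n=1}^{\infty} B_{n}^{*}(x)z^{n} = -\frac{\log z}{2} - \frac{1}{2} \psi \left( z + \frac{1}{z} - 1 - x \right),$$
interpreted asymptotically as $z \to 0^{+}$. Setting $x = -1$ collapses the argument of $\psi$ to $z + 1/z$, giving
$$\sum_{n=1}^{\infty} B_{n}^{*}(-1) z^{n} = -\frac{\log z}{2} - \frac{1}{2} \psi \left( z + \frac{1}{z} \right).$$

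Next, I would multiply both sides by $-2$ and recognize the right-hand side as $V(z)$ via the definition \eqref{V-def}, yielding
$$V(z) = -2 \sum_{n=1}^{\infty} B_{n}^{*}(-1) z^{n}.$$
Comparing this term by term with the expansion \eqref{V-asym} gives $v_n = -2 B_n^{*}(-1)$ for all $n \geq 1$; the $n=0$ case, where $B_0^{*}$ is undefined, is independently handled by Proposition \ref{value-a0}, which asserts $v_0 = 0$ consistent with the sum on the left starting at $n=1$.

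The main subtlety — less an obstacle than a point requiring care — is justifying the coefficient matching itself. The series $V(z) \sim \sum v_n z^n$ is purely asymptotic at $z=0$ and in fact diverges for $z \neq 0$ (cf.\ the Note following Theorem \ref{asym-F}, combined with the relation $V(z) = F(z/(z^2+1)) + \log(z^2+1)$). Nevertheless, asymptotic expansions in integer powers of $z$ at a given point are unique, so once both $V(z)$ and $-2 \sum_{n \geq 1} B_n^{*}(-1) z^n$ are identified as the asymptotic expansion of the same analytic expression at the origin, termwise equality of coefficients is forced. This uniqueness is the only ingredient beyond the direct substitution.
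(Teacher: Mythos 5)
Your proposal is correct and follows exactly the paper's own argument: specialize the generating function of Theorem~$5.1$ of \cite{dixit-2012a} at $x=-1$, recognize the right-hand side as $-\tfrac{1}{2}V(z)$ via \eqref{V-def}, and compare coefficients with \eqref{V-asym}. Your added remark on the uniqueness of asymptotic expansions in integer powers of $z$ is a sensible point of care that the paper leaves implicit, but it does not change the substance of the proof.
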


To obtain an expression for
$B_{n}^{*}(-1)$ use \eqref{cheby-zag0} 
with $n$ replaced by
$2n+1$ and $x=-1$. It follows that
\begin{equation}
B_{2n+1}^{*}(-1) = B_{2n+1}^{*}(0) - \frac{1}{2} U_{2n} \left( \frac{1}{2}
\right).
\label{form-000}
\end{equation}
\noindent
The reduction of this expression uses
Theorem $10.1$ in \cite{dixit-2012a} in the form
\begin{equation}
2B_{2n+1}^{*}(x) = \sum_{r=0}^{n} (-1)^{n+r} \binom{n+r+1}{2r+1}
\frac{B_{2r+1}(x)}{n+r+1} + U_{2n} \left( \frac{x}{2} \right) +
U_{2n} \left( \frac{x+1}{2} \right),
\end{equation}
\noindent
which in the special case $x=0$ produces
\begin{equation}
B_{2n+1}^{*}(0) = \frac{(-1)^{n}}{4} + \frac{1}{2} U_{2n} \left( \frac{1}{2}
\right)
\end{equation}
\noindent
using $U_{2n}(0) = (-1)^n$. Inserting 
in \eqref{form-000} gives the result for odd index.

In the case of even index, the proof starts with the reflection symmetry
of the Zagier polynomials
\begin{equation}
B_{n}^{*}(-x-3) = (-1)^{n} B_{n}^{*}(x)
\end{equation}
\noindent
(given as Theorem $11.1$ in \cite{dixit-2012a}) which in
the special case $x=-2$ gives
\begin{equation}
-2B_{2n}^{*}(-1) = -2 B_{2n}^{*}(-2).
\label{v2n-0}
\end{equation}
\noindent
To obtain the expression for $v_{2n}$, use the
identity $(10.10)$ in \cite{dixit-2012a}
\begin{equation}
\sum_{r=0}^{n} (-1)^{n+r} \binom{n+r}{2r} \frac{B_{2r}(x)}{n+r} =
2 B_{2n}^{*}(x-2)
\label{bstar-59}
\end{equation}
\noindent
in the special case $x=0$.
 This gives the values of $v_{n}$ stated in
Corollary \ref{thm-vnform}. Thus \eqref{form1-vna1} and \eqref{v2n-0} imply
\eqref{value-vn}.

\section{Calculation of $v_{n}$ by an asymptotic method}
\label{sec-glasser}

The goal of this section is to derive the formula for $v_{n}$ by a
direct asymptotic expansion of the digamma function:
\begin{equation}
\psi(z) \sim \log z - \frac{1}{2z} - 
\sum_{k=1}^{\infty} \frac{B_{2k}}{2kz^{2k}},
\text{ as }
z \to \infty.
\label{asym-psi5}
\end{equation}

Start with
\begin{equation}
V(z) = \psi \left( \frac{z^{2}+1}{z} \right) - \log \left( \frac{z^{2}+1}{z}
\right) + \log(z^{2}+1)
\end{equation}
\noindent
and use \eqref{asym-psi5} to obtain
\begin{eqnarray*}
V(z) & \sim & - \sum_{k=1}^{\infty} \frac{B_{k}}{k}
\left( \frac{z}{z^{2}+1} \right)^{k} + \log(z^{2}+1) \\
& = & \frac{z}{2(z^{2}+1)} - \sum_{k=1}^{\infty} \frac{B_{2k}}{2k}
\left( \frac{z}{z^{2}+1} \right)^{2k} + \log(z^{2}+1) \\
& = & \sum_{n=1}^{\infty} \frac{(-1)^{n+1}}{n} z^{2n} -
\frac{1}{2} \sum_{n=0}^{\infty} (-1)^{n} z^{2n+1} -
\sum_{k=1}^{\infty} \frac{B_{2k}}{2k} z^{2k}
\sum_{\ell =0}^{\infty} \frac{(-1)^{\ell} (\ell - 1 +2k)!}{\ell! (2k-1)! }
z^{2 \ell}.
\end{eqnarray*}
\noindent
The coefficient of the odd powers of $z$ can be read immediately. Indeed,
\begin{equation}
v_{2n-1} = \frac{(-1)^{n}}{2}.
\end{equation}
\noindent
This is \eqref{form1-vna0}. To obtain the
expression for the even powers, observe that
\begin{equation*}
\sum_{k=1}^{\infty} \frac{B_{2k}}{2k} z^{2k}
\sum_{\ell =0}^{\infty} \frac{(-1)^{\ell} (\ell - 1 +2k)!}{\ell! (2k-1)! }
z^{2 \ell} =
\sum_{i=1}^{\infty} (-1)^i
\left( \sum_{k=1}^{i} (-1)^{k} \binom{i+k-1}{2k-1}
\frac{B_{2k}}{2k} \right) z^{2i}.
\end{equation*}
\noindent
This gives
\begin{equation}
v_{2n} = \frac{(-1)^{n}}{n} +
\sum_{k=1}^{n} (-1)^{k} \binom{n+k-1}{2k-1} \frac{B_{2k}}{2k}.
\end{equation}
\noindent
This is equivalent to \eqref{form1-vna1} and also to \eqref{bstar-59} with
$x=0$.

\medskip

An expression for $v_{2n}$ in terms of Chebyshev polynomials in given next.

\begin{proposition}
Let $T_{n}(x)$ be the Chebyshev polynomial of the first kind. Then
\begin{equation}
v_{2n} = - \text{eval} \left(
\frac{1}{n} T_{2n} \left( \frac{\mathfrak{B}}{2} \right) \right) =
- \frac{1}{n} T_{n} \left( \frac{\mathfrak{B}^{2}-2}{2} \right).
\end{equation}
\end{proposition}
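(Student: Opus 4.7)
The strategy is to prove the two equalities separately, starting from the right-hand expression and working back to the formula \eqref{form1-vna1}.

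The second equality is an immediate consequence of the classical identity $T_{2n}(x)=T_n(2x^2-1)$, which follows from $\cos(2n\theta)=T_n(\cos 2\theta)=T_n(2\cos^2\theta-1)$. Substituting $x=\mathfrak{B}/2$ under $\op{eval}$ yields
\begin{equation*}
  T_{2n}\!\left(\tfrac{\mathfrak{B}}{2}\right) \;=\; T_n\!\left(2\cdot\tfrac{\mathfrak{B}^2}{4}-1\right) \;=\; T_n\!\left(\tfrac{\mathfrak{B}^2-2}{2}\right),
\end{equation*}
and since $T_n((\mathfrak{B}^2-2)/2)$ is already a polynomial in $\mathfrak{B}^2$ with numerical coefficients, $\op{eval}$ can be dropped on the far right, giving the second equality.

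For the first equality the plan is to expand $T_{2n}(y/2)$ directly using the standard formula
\begin{equation*}
  T_m(y) \;=\; \frac{m}{2}\sum_{k=0}^{\lfloor m/2\rfloor}\frac{(-1)^k}{m-k}\binom{m-k}{k}(2y)^{m-2k},
\end{equation*}
applied with $m=2n$ and $y=\mathfrak{B}/2$. After re-indexing $j=n-k$ this yields
\begin{equation*}
  \frac{1}{n}T_{2n}\!\left(\tfrac{\mathfrak{B}}{2}\right) \;=\; \sum_{j=0}^{n}\frac{(-1)^{n-j}}{n+j}\binom{n+j}{2j}\mathfrak{B}^{2j}.
\end{equation*}
Since only even powers of $\mathfrak{B}$ appear, the umbral evaluation rule $\op{eval}(\mathfrak{B}^{2j})=B_{2j}$ applies with no interference from the odd-index Bernoulli numbers. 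Taking the negative gives
\begin{equation*}
  -\op{eval}\!\left(\tfrac{1}{n}T_{2n}(\mathfrak{B}/2)\right) \;=\; (-1)^{n+1}\left[\frac{1}{n} + \sum_{j=1}^{n}\frac{(-1)^{j}}{n+j}\binom{n+j}{2j}B_{2j}\right].
\end{equation*}

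To finish, I compare with Corollary \ref{thm-vnform}. The only non-cosmetic step is the elementary binomial identity
\begin{equation*}
  \frac{1}{n+j}\binom{n+j}{2j} \;=\; \frac{1}{2j}\binom{n+j-1}{n-j},
\end{equation*}
which is verified by expanding both sides as $\frac{(n+j-1)!}{(2j)!(n-j)!}$. With this, the displayed expression matches \eqref{form1-vna1} term by term. I do not anticipate any real obstacle here: the main step is recognizing which closed form of $T_m$ to use, after which the computation is a routine expansion plus one factorial identity.
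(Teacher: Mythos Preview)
Your proof is correct but takes a different route from the paper's. The paper invokes Lemma~9.2 of \cite{dixit-2012a}, namely $B_n^*(x) = \text{eval}\big(\tfrac{1}{n}T_n(\tfrac{\mathfrak{B}+x+2}{2})\big)$, together with the relation $v_{2n} = -2B_{2n}^*(-2)$ established earlier in Section~\ref{sec-zagier} (equation~\eqref{v2n-0}); setting $x=-2$ gives the first equality in one line, and the second then follows from $T_{2n}(x)=T_n(2x^2-1)$ exactly as you do. You instead expand $T_{2n}(\mathfrak{B}/2)$ directly via the explicit coefficient formula for $T_m$ and verify term-by-term agreement with \eqref{form1-vna1}. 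Your approach is self-contained within this paper (it does not appeal to the companion paper's umbral representation of $B_n^*(x)$) and computationally transparent; the paper's approach is shorter and makes the structural link to the Zagier polynomials explicit. One cosmetic remark: the right-hand expression $-\tfrac{1}{n}T_n\!\big(\tfrac{\mathfrak{B}^2-2}{2}\big)$ still carries an implicit $\text{eval}$ (it is umbral shorthand), so saying that $\text{eval}$ ``can be dropped'' is notational rather than literal.
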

\begin{proof}
Lemma $9.2$ in \cite{dixit-2012a} established  the representation
\begin{equation}
B_{n}^{*}(x) = \text{eval} \left(
\frac{1}{n} T_{n} \left( \frac{\mathfrak{B} + x + 2}{2} \right) \right).
\end{equation}
\noindent
The relation \eqref{v2n-0} gives $v_{2n} = - 2B_{2n}^{*}(-2)$ so that
\begin{equation}
v_{2n} = - \text{eval} \left(
\frac{1}{n} T_{2n} \left( \frac{\mathfrak{B}}{2} \right) \right).
\end{equation}
\noindent
The result now follows from the identity $T_{2n}(x) = T_{n}(2x^{2}-1)$ for
Chebyshev polynomials; see \cite{brychkov-2008a}, $7.210$ formula 
7 on page $550$.
\end{proof}

\section{The asymptotics of $\psi(z)$ and its derivatives}
\label{sec-asymptotics}

The coefficients $v_{n}$ in the expansion \eqref{V-asym} are now evaluated
from the expression
\begin{equation}
v_{n} = \frac{1}{n!} \lim\limits_{z \to 0} \left( \frac{d}{dz} \right)^{n}
V(z).
\label{vn-zero}
\end{equation}
%
%

The next theorem shows
existence of a  sequence of polynomials $A_{j,n}(z)$ that give the desired
formula for derivatives of $V(z)$. Theorem \ref{thm-koutschan} presented below
provides an explicit form of these polynomials.

\begin{theorem}
\label{thm-polyV}
Let $n \in \mathbb{N}$. Then there are polynomials $A_{j,n}(z)$, with
$1 \leq j \leq n$ such that
\begin{equation}
z^{2n} \left( \frac{d}{dz} \right)^{n} V(z) =
(-1)^{n-1} (n-1)! z^{n} + \sum_{j=1}^{n} A_{j,n}(z) \psi_{j}(z + 1/z).
\label{polynomial-V}
\end{equation}
\noindent
The polynomials $A_{j,n}(z)$ satisfy the recurrences
\begin{eqnarray*}
A_{n+1,n+1}(z) & = & (z^{2}-1) A_{n,n}(z), \nonumber  \\
A_{j,n+1}(z) & = & -2nzA_{j,n}(z) + z^{2}A_{j,n}'(z) + (z^{2}-1)A_{j-1,n}(z)
\quad \text{ for } 2 \leq j \leq n, \nonumber \\
A_{1,n+1}(z) & = & -2nz A_{1,n}(z) + z^{2}A_{1,n}'(z), \nonumber
\end{eqnarray*}
\noindent
and the initial condition
\begin{equation*}
A_{1,1}(z) = z^{2}-1.
\end{equation*}
\noindent
The degree of $A_{j,n}(z)$ is $n+j-2$ if $1 \leq j \leq n-1$
and $2n$ for $j=n$.
\end{theorem}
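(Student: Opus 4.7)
The plan is to prove the identity \eqref{polynomial-V} together with the three recurrences by induction on $n$, and to handle the degree claim afterwards using the explicit form of the diagonal entry $A_{n,n}(z)$.

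For the base case $n=1$, a direct calculation from $V(z) = \log z + \psi(z + 1/z)$ gives $V'(z) = \frac{1}{z} + (1 - z^{-2})\psi'(z + 1/z)$, so $z^2 V'(z) = z + (z^2-1)\psi_1(z + 1/z)$, matching the claim with $A_{1,1}(z) = z^2 - 1$.

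For the inductive step, I would differentiate both sides of \eqref{polynomial-V} and isolate $z^{2n+2} V^{(n+1)}(z)$ using $z^2 \frac{d}{dz}[z^{2n} V^{(n)}(z)] = 2n z^{2n+1} V^{(n)}(z) + z^{2n+2} V^{(n+1)}(z)$. On the right, the chain rule applied to each summand $A_{j,n}(z)\psi_j(z+1/z)$ produces one term $A_{j,n}'(z)\psi_j(z+1/z)$ and one term $A_{j,n}(z)(1 - z^{-2})\psi_{j+1}(z + 1/z)$. After multiplying through by $z^2$ and reindexing the shifted sum, the coefficient of $\psi_j$ for each $j$ collects into exactly the three recurrences of the theorem --- the case $j=1$ missing the $(z^2-1)A_{j-1,n}$ contribution, the cases $2 \le j \le n$ receiving all three, and the case $j = n+1$ consisting solely of $(z^2-1) A_{n,n}(z)$. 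The polynomial portion merges the derivative $(-1)^{n-1} n! z^{n+1}$ of $z^2 \frac{d}{dz}[(-1)^{n-1}(n-1)! z^n]$ with the contribution $(-1)^n \cdot 2 \cdot n! z^{n+1}$ from substituting $z^{2n} V^{(n)}(z)$ into $-2n z \cdot z^{2n} V^{(n)}(z)$, which combine to $(-1)^n n! z^{n+1}$, as required.

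For the degree claim, I would first observe that the diagonal recurrence together with $A_{1,1}(z) = z^2 - 1$ yields the closed form $A_{n,n}(z) = (z^2 - 1)^n$, giving $\deg A_{n,n} = 2n$. The degrees of the off-diagonal $A_{j,n}$ with $j \le n-1$ then follow by induction: in the recurrence for $A_{j,n+1}$ each of the three contributions has degree $n + j - 1 = (n+1) + j - 2$, confirming the claim. The main subtlety --- and where I expect the real obstacle to lie --- is the case $j = n$ of the step $n \mapsto n+1$: here both $-2nz A_{n,n}(z)$ and $z^2 A_{n,n}'(z)$ individually have degree $2n+1$, but the closed form for $A_{n,n}$ allows one to rewrite their sum as $2nz(z^2-1)^{n-1}\bigl[-(z^2-1) + z^2\bigr] = 2nz(z^2-1)^{n-1}$, dropping the degree by two and matching $\deg A_{n,n+1} = 2n - 1 = (n+1) + n - 2$. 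Without this clean closed form for the diagonal entries, one would have to track leading coefficients by hand throughout the induction, so the identification of $A_{n,n}$ as $(z^2-1)^n$ is the crucial enabling observation.
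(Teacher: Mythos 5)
Your proof is correct and follows essentially the same route as the paper: differentiate \eqref{polynomial-V}, multiply by $z^{2}$, and compare coefficients of $\psi_{j}(z+1/z)$ to read off the three recurrences, with the polynomial term coming from the $n$-th derivative of $\log z$. In fact you are more careful than the paper on the degree statement (the paper merely asserts it follows from the recurrence), correctly isolating the cancellation $-2nzA_{n,n}+z^{2}A_{n,n}'=2nz(z^{2}-1)^{n-1}$ as the one place where naive degree counting fails; the only remaining detail left implicit, in both your argument and the paper's, is that the leading coefficients of $-2nzA_{j,n}+z^{2}A_{j,n}'$ and $(z^{2}-1)A_{j-1,n}$ do not cancel for general $j<n$.
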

\begin{proof}
The term $(-1)^{n-1}(n-1)!z^{-n}$ arises from the $n$-th derivative of
$\log z$. To obtain the recurrences, simply observe that
\begin{eqnarray*}
\left( \frac{d}{dz} \right)^{n+1} \psi \left( z + 1/z \right)
& = &
\left( \frac{d}{dz} \right)
\left[ z^{-2n} \sum_{j=1}^{n} A_{j,n}(z) \psi_{j} \left( z + 1/z \right)
\right]
\end{eqnarray*}
\noindent
and compare the coefficients of $\psi_{j}(z+1/z)$. The statement about the
degree of $A_{j,n}(z)$ is obtained directly from the recurrence.
\end{proof}

The next theorem gives an explicit form of the polynomials $A_{j,n}(z)$. The
authors wish to thank C. Koutschan who used his symbolic package to
solve the recurrences in Theorem \ref{thm-polyV}.

\begin{theorem}
\label{thm-koutschan}
The polynomials $A_{j,n}(z)$ are given by
\begin{equation}
A_{n,n}(z) = (z^{2}-1)^{n}
\end{equation}
\noindent
and for $1 \leq j < n$,
\begin{equation}
A_{j,n}(z) = (-1)^{n} \frac{n!}{j!} z^{n-j} \sum_{r=0}^{j-1}
(-1)^{r} \binom{n-1-r}{n-j} \binom{j}{r} z^{2r}.
\end{equation}
\end{theorem}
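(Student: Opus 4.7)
The strategy is strong induction on $n$, verifying the two closed forms directly against the three recurrences and the initial condition supplied by Theorem~\ref{thm-polyV}. The base case $n=1$ is forced: the initial condition gives $A_{1,1}(z)=z^{2}-1=(z^{2}-1)^{1}$, which is both the ``diagonal'' formula at $(j,n)=(1,1)$ and the limit of the general formula as $j\to n$ (with the empty sum convention). Assuming the formulas hold at level $n$, I would then check the three cases that build level $n+1$.

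The diagonal case is immediate: $A_{n+1,n+1}(z)=(z^{2}-1)A_{n,n}(z)=(z^{2}-1)^{n+1}$. The corner case $j=1$ is almost as easy: the general formula collapses, at $j=1$ and $r=0$, to the monomial $A_{1,n}(z)=(-1)^{n}n!\,z^{n-1}$, and plugging this into $A_{1,n+1}(z)=-2nz\,A_{1,n}(z)+z^{2}A_{1,n}'(z)$ gives $(-1)^{n}n!\,z^{n}(-2n+(n-1))=(-1)^{n+1}(n+1)!\,z^{n}$, which is the claimed formula at $j=1$.

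The heart of the argument is the middle recurrence for $2\le j\le n$. I would write
\begin{equation*}
A_{j,n}(z)=(-1)^{n}\frac{n!}{j!}\,z^{n-j}P_{j,n}(z^{2}),
\qquad
P_{j,n}(w)=\sum_{r=0}^{j-1}c_{j,n,r}\,w^{r},
\qquad
c_{j,n,r}=(-1)^{r}\binom{n-1-r}{n-j}\binom{j}{r}.
\end{equation*}
Each summand on the right-hand side of $A_{j,n+1}(z)=-2nz\,A_{j,n}(z)+z^{2}A_{j,n}'(z)+(z^{2}-1)A_{j-1,n}(z)$ becomes $(-1)^{n+1}\frac{(n+1)!}{j!}z^{n+1-j}$ times a polynomial in $z^{2}$, so equating coefficients of $z^{2r}$ reduces the recurrence to a scalar identity relating $c_{j,n+1,r}$ to a fixed linear combination of $c_{j,n,r}$, $c_{j,n,r-1}$, $c_{j-1,n,r}$, and $c_{j-1,n,r-1}$. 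This should collapse under Pascal's rule together with the elementary $(n-j)\binom{n-1-r}{n-j}=(n-1-r)\binom{n-2-r}{n-j-1}$, yielding the formula for $A_{j,n+1}$.

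The main obstacle is twofold: first, the bookkeeping of signs, shifts in $r$, and the factor $(n+1)/(\text{stuff})$ when one passes from level $n$ to level $n+1$; second, the boundary value $j=n$ of the middle recurrence, in which the input $A_{n,n}(z)=(z^{2}-1)^{n}$ must be treated via the diagonal formula while the output $A_{n,n+1}(z)$ is governed by the general formula. The cleanest way to handle this is to observe that expanding $(z^{2}-1)^{n}$ by the binomial theorem produces precisely the coefficients $c_{n,n,r}$ that the general formula would predict if one formally allowed $j=n$, so the matching is automatic; after this check, the inductive step applies uniformly across all $2\le j\le n$.
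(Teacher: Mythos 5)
Your proposal is correct and is essentially the paper's own proof, which consists of the single line ``simply check that the stated form satisfies the recurrences of Theorem~\ref{thm-polyV}''; your induction on $n$ carries out exactly that verification, reducing the middle recurrence to a scalar binomial identity. One small imprecision: at the boundary $j=n$ the binomial expansion of $(z^{2}-1)^{n}$ does not match the formal $j=n$ instance of the general formula ``precisely'' --- the general sum stops at $r=j-1=n-1$, so $(z^{2}-1)^{n}$ carries one extra term $z^{2n}$ --- but this causes no harm, since $z^{2n}$ is annihilated by the operator $-2nz+z^{2}\frac{d}{dz}$ occurring in the recurrence, and the inductive step goes through as you describe.
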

\begin{proof}
Simply check that the form stated in this theorem satisfies the
recurrence given in Theorem \ref{thm-polyV}.
\end{proof}

\begin{note}
The package of C. Koutschan actually gives the form
\begin{equation}
A_{j,n}(z) = (-1)^{n} z^{n-j} \binom{n-j}{j-1} \binom{n}{j} (n-j)!
\, {_{2}F_{1}}\left( 1-j, -j ; 1-n; z^{2} \right).
\end{equation}
\noindent
The hypergeometric representation of the Jacobi polynomials
\begin{equation}
P_{m}^{(\alpha,\beta)}(x) = \binom{m+\alpha}{m}
\, {_{2}F_{1}}\left( -m, m+ \alpha + \beta + 1 ; \alpha + 1;
\frac{1-x}{2} \right)
\end{equation}
\noindent
shows that
\begin{equation}
A_{j,n}(z) = (-1)^{n+j-1} \frac{n!}{j} z^{n-j}
P_{j-1}^{(-n, n-2j)}(1-2z^{2}).
\end{equation}
\end{note}

\begin{note}
The coefficients $v_{n}$ are now obtained from \eqref{vn-zero} and the
identity
\begin{equation}
\left( \frac{d}{dz} \right)^{n} V(z) =
(-1)^{n-1} (n-1)! z^{-n} + z^{-2n} \sum_{j=1}^{n} A_{j,n}(z) \psi_{j}(z + 1/z).
\label{polynomial-V1}
\end{equation}
\noindent
This employs the expansion
\begin{equation}
\psi^{(j)}(z) = \psi_{j}(z) \sim (-1)^{j-1}
\left[ \frac{(j-1)!}{z^{j}} + \frac{j!}{2z^{j+1}}
+ \sum_{k=1}^{\infty} B_{2k} \frac{(2k+j-1)!}{(2k)! \, z^{2k+j}} \right],
\label{asym-psi1}
\end{equation}
(that appears as $6.4.11$ in \cite{abramowitz-1972a}).
The polygamma function, which appear differentiating \eqref{V-def} to 
obtain \eqref{vn-zero}, has argument $z+1/z$. Thus
\eqref{asym-psi1} is used in the form
\begin{eqnarray*}
\psi_{j}(z+1/z) & \sim &
(-1)^{j-1}
\left[ \frac{(j-1)!z^{j}}{(z^{2}+1)^{j}} +
\frac{j!z^{j+1}}{2(z^{2}+1)^{j+1}} +
\sum_{k=1}^{\infty} \frac{B_{2k} (2k+j-1)! z^{2k+j}}{(2k)! (z^{2}+1)^{2k+j}}
\right] \\
& = & \frac{(-1)^{j-1} z^{j}}{(z^{2}+1)^{j}}
\sum_{k=0}^{\infty} \frac{(-1)^{k} (k+j-1)!}{k!}
\frac{B_{k}z^{k}}{(z^{2}+1)^{k}}
\end{eqnarray*}
\noindent
as $z \to 0$.
\end{note}

\begin{proposition}
\label{asym-psi-final}
The asymptotic  expansion
\begin{eqnarray*}
\psi_{j}(z+ 1/z) & \sim &
\frac{(-1)^{j-1}}{2} z^{j+1} \sum_{r=0}^{\infty} \frac{(-1)^{r} (j+r)!}
{r!} z^{2r}   \\
&  &  + (-1)^{j-1}z^{j}
\sum_{\ell=0}^{\infty}
\left[ \sum_{k=0}^{\ell}
(-1)^{\ell - k} B_{2k} \frac{(k+j+\ell - 1)!}{(2k)! \, (\ell - k)!}
\right] \, z^{2 \ell}
\end{eqnarray*}
\noindent
holds as $z \to 0$.
\end{proposition}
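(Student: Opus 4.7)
The plan is to take the closed form displayed in the Note immediately preceding the statement,
\[ \psi_{j}(z+1/z) \sim \frac{(-1)^{j-1} z^{j}}{(z^{2}+1)^{j}} \sum_{k=0}^{\infty} \frac{(-1)^{k} (k+j-1)!}{k!} \frac{B_{k} z^{k}}{(z^{2}+1)^{k}}, \]
and reorganize it as a single power series in $z$ around $z=0$, naturally splitting off the two pieces appearing in the statement.

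First I would peel off the $k=1$ summand, which is the only contribution involving an odd-index Bernoulli number since $B_{2m+1}=0$ for $m\geq 1$. Using $B_{1}=-1/2$, this summand equals $\frac{(-1)^{j-1} j!\, z^{j+1}}{2(z^{2}+1)^{j+1}}$. Expanding $(z^{2}+1)^{-(j+1)}$ by the generalized binomial theorem gives $\sum_{r\geq 0} (-1)^{r}\binom{j+r}{r} z^{2r}$, and a direct simplification of the factorials produces precisely the first sum in the statement, namely $\frac{(-1)^{j-1}}{2}\, z^{j+1} \sum_{r\geq 0} (-1)^{r}\frac{(j+r)!}{r!}\, z^{2r}$.

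For the remaining contributions, indexed by $k=2m$ with $m\geq 0$, I would again expand
\[ (z^{2}+1)^{-(2m+j)} = \sum_{p\geq 0} (-1)^{p}\binom{2m+j+p-1}{p} z^{2p}. \]
This yields a double series in $m$ and $p$; setting $\ell = m+p$ and collecting the coefficient of $z^{2\ell}$ converts it into a single series in $\ell$. The inner coefficient at index $\ell$ involves, for each $k := m$ with $0\leq k\leq \ell$, the factor $\frac{(2k+j-1)!}{(2k)!}\binom{k+j+\ell-1}{\ell-k}$, which simplifies via $\binom{k+j+\ell-1}{\ell-k} = \frac{(k+j+\ell-1)!}{(\ell-k)!(2k+j-1)!}$ to $\frac{(k+j+\ell-1)!}{(2k)!(\ell-k)!}$, matching the second sum in the statement, with the sign $(-1)^{\ell-k}$ arising from $(-1)^{p}$.

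The only genuine obstacle is bookkeeping: keeping track of the signs produced by the binomial expansions, isolating the $B_{1}$ contribution cleanly from the $B_{2k}$ ones, and verifying the factorial cancellation. No additional analytic work is required, since the asymptotic character of the expansion is inherited directly from the underlying asymptotic expansion \eqref{asym-psi1} of $\psi^{(j)}(z)$ already invoked in the preceding Note.
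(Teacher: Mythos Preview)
Your argument is correct and follows exactly the route the paper intends: the proposition is stated without an explicit proof, but is meant to follow from the compact expression in the preceding Note by separating the $k=1$ term (carrying $B_{1}=-1/2$) from the even-index Bernoulli contributions and then expanding each factor $(z^{2}+1)^{-(\cdot)}$ via the binomial series and regrouping in powers of $z$. Your bookkeeping (signs, the substitution $\ell=m+p$, and the factorial cancellation $\frac{(2k+j-1)!}{(2k)!}\binom{k+j+\ell-1}{\ell-k}=\frac{(k+j+\ell-1)!}{(2k)!\,(\ell-k)!}$) is accurate.
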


A direct non-illuminating computation of the expansion in
\eqref{polynomial-V1} gives the values of $v_{n}$ in
Theorem \ref{thm-vnform}. Given the fact that other proofs of this result
have been provided, the long but elementary details are omitted.

\section{Calculation of $v_{n}$ via integral representations and the Fa\`{a} di Bruno formula}
\label{sec-int-rep}
\setcounter{equation}{0}

This section employs the integral representation
\begin{equation} \label{int-rep}
\psi(x) = \log x + \int_{0}^{\infty} \left( \frac{1}{t} - \frac{1}{1-e^{-t}}
\right) \, e^{-tx} \, dt,
\end{equation}
\noindent
of the digamma function, given as entry $8.361.8$ in \cite{gradshteyn-2007a},
to obtain the values of $v_{n}$ given in Corollary \ref{thm-vnform}.

\begin{lemma}
The function $V(z)$ in \eqref{V-def} is  expressed as
\begin{equation}
V(z) = \log(z^{2}+1) + \int_{0}^{\infty} \left( \frac{1}{t} - \frac{1}{1-e^{-t}} \right) e^{-t(z+1/z)} \, dt.
\label{eq-forV}
\end{equation}
\end{lemma}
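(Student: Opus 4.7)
The plan is to substitute $x = z + 1/z$ directly into the integral representation \eqref{int-rep} of the digamma function. This gives
\begin{equation*}
\psi\!\left(z + \tfrac{1}{z}\right) = \log\!\left(z + \tfrac{1}{z}\right) + \int_0^\infty \left(\frac{1}{t} - \frac{1}{1-e^{-t}}\right) e^{-t(z+1/z)}\,dt,
\end{equation*}
provided the substitution is legitimate, which requires $z + 1/z > 0$; this is fine for $z > 0$, and the final identity then extends by analytic continuation in $z$.

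Next I would combine the $\log z$ coming from the definition \eqref{V-def} of $V(z)$ with the logarithm appearing on the right-hand side by the elementary identity
\begin{equation*}
\log z + \log\!\left(z + \tfrac{1}{z}\right) = \log\!\left(z\!\left(z + \tfrac{1}{z}\right)\right) = \log(z^2+1),
\end{equation*}
which yields \eqref{eq-forV} immediately.

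There is essentially no obstacle beyond a brief justification of the domain of validity: the integral in \eqref{int-rep} converges absolutely when $\operatorname{Re} x > 0$, so one needs $\operatorname{Re}(z + 1/z) > 0$. For real $z > 0$ this is automatic, and both sides of \eqref{eq-forV} are analytic in a neighborhood of the positive real axis (the integrand in $t$ is bounded as $t \to 0^+$ because $\frac{1}{t} - \frac{1}{1-e^{-t}} = -\tfrac{1}{2} + O(t)$, and decays exponentially as $t \to \infty$), so one can work on $(0,\infty)$ without further subtlety. This is the only bookkeeping required; the rest is a direct substitution.
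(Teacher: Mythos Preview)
Your proof is correct and is exactly the intended argument: the paper states this lemma without proof immediately after recalling the integral representation \eqref{int-rep}, so the implicit proof is precisely the substitution $x = z + 1/z$ together with $\log z + \log(z+1/z) = \log(z^{2}+1)$ that you carry out. Your remarks on the domain of validity are a reasonable addition but not strictly needed here.
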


The representation \eqref{eq-forV} reduces the computation of $v_{n}$ to
the asymptotic expansion of
\begin{equation}
W(z) = \int_{0}^{\infty} \left( \frac{1}{t} - \frac{1}{1-e^{-t}} \right) e^{-t(z+1/z)} \, dt. \label{W}
\end{equation}
\noindent
Indeed, if
\begin{equation}
V(z) \sim \sum_{n=0}^{\infty} v_{n}z^{n} \text{ and }
W(z) \sim \sum_{n=0}^{\infty} w_{n}z^{n},
\end{equation}
\noindent
then $v_{2n-1} = w_{2n-1}$ and $v_{2n} = w_{2n} + (-1)^{n-1}/n$.
The next lemma is preliminary to the computation of this expansion.

\begin{lemma}
\label{bin-ser}
\begin{equation*}
\left(\frac{z}{z^2+1}\right)^{2n} = \sum_{k=n}^\infty (-1)^{k-n}\binom{n+k-1}{k-n} z^{2k}
\end{equation*}
\end{lemma}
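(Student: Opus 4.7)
The plan is to recognize the left-hand side as $z^{2n}(1+z^2)^{-2n}$ and apply the generalized binomial series to $(1+z^2)^{-2n}$, then reindex.

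First I would write
\begin{equation*}
\left(\frac{z}{z^2+1}\right)^{2n} = z^{2n}(1+z^2)^{-2n}
\end{equation*}
and invoke the identity $\binom{-m}{j} = (-1)^j \binom{m+j-1}{j}$ to expand
\begin{equation*}
(1+z^2)^{-2n} = \sum_{j=0}^{\infty}\binom{-2n}{j} z^{2j} = \sum_{j=0}^{\infty}(-1)^{j}\binom{2n+j-1}{j}z^{2j}.
\end{equation*}
Multiplying by $z^{2n}$ shifts the lowest power of $z$ up to $z^{2n}$, so
\begin{equation*}
\left(\frac{z}{z^2+1}\right)^{2n} = \sum_{j=0}^{\infty}(-1)^{j}\binom{2n+j-1}{j}z^{2n+2j}.
\end{equation*}

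Next I would set $k=n+j$, so that $j=k-n$ runs over $k\geq n$, giving
\begin{equation*}
\left(\frac{z}{z^2+1}\right)^{2n} = \sum_{k=n}^{\infty}(-1)^{k-n}\binom{2n+(k-n)-1}{k-n}z^{2k} = \sum_{k=n}^{\infty}(-1)^{k-n}\binom{n+k-1}{k-n}z^{2k},
\end{equation*}
which is the claimed identity.

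There is no real obstacle here; the only minor point to be careful about is the identification of the binomial coefficient after reindexing, namely that $2n+(k-n)-1 = n+k-1$. Convergence of the binomial series holds for $|z^2|<1$, and since the lemma is used in the context of an asymptotic expansion as $z\to 0$ the formal/analytic identity is valid in a neighborhood of the origin.
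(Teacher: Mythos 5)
Your proof is correct and follows essentially the same route as the paper: expand $(1+z^2)^{-2n}$ by the binomial series, apply $\binom{-2n}{j}=(-1)^j\binom{2n+j-1}{j}$, and reindex by $k=n+j$ (the paper merely reindexes before invoking the negative-binomial identity rather than after). No gaps.
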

\begin{proof}
Use the binomial series
\[(z^2+1)^{-2n}=\sum_{i=0}^\infty \binom{-2n}{i}z^{2i}\]
to find
\[
\left(\frac{z}{z^2+1}\right)^{2n} = \sum_{i=0}^\infty \binom{-2n}{i}z^{2n+2i}=  \sum_{k=n}^\infty \binom{-2n}{k-n}z^{2k}.
\]
Now use the elementary identity
\[\binom{-2n}{k-n}=(-1)^{k-n} \binom{n+k-1}{k-n}\]
to obtain the result.
\end{proof}

To find the asymptotic expansion of the function $W(z)$ defined in 
\eqref{W}, let $s=z/(z^2+1)$, and use the change of variable $x=t/s$ to get
\begin{eqnarray*}
W(z) &=&\int_0^\infty \frac{1}{x} \left(1-\frac{xse^{xs}}{e^{xs}-1}\right)e^{-x}dx \\
&=&\int_0^\infty \frac{1}{x} 
\left(1-\sum_{n=0}^\infty \frac{B_n(1)}{n!}\left(xs\right)^n \right)e^{-x}dx \\
&=&-\int_0^\infty \frac{1}{x} 
\sum_{n=1}^\infty \frac{B_n(1)}{n!}\left(xs\right)^n e^{-x}dx.
\end{eqnarray*}

The infinite series is not uniformly convergent as $z\rightarrow 0$, and 
interchanging the sum with the integral does not provide a convergent 
series. But the resulting series
(with radius of convergence zero) will be the asymptotic expansion of $W(z)$:
\begin{eqnarray*}
W(z) &\sim&-\sum_{n=1}^\infty \frac{B_n(1)}{n!} s^n\int_0^\infty 
x^{n-1}   e^{-x}dx \\
&=& -\sum_{n=1}^\infty \frac{B_n(1)}{n!} s^n (n-1)! 
=  -\sum_{n=1}^\infty \frac{B_n(1)}{n} \left(\frac{z}{z^2+1}\right)^n\\
&=&-\frac{z}{2(z^2+1)}-\sum_{n=1}^\infty \frac{B_{2n}(1)}{2n} \left(\frac{z}{z^2+1}\right)^{2n}.
\end{eqnarray*}
The expression for the coefficients $w_n$ corresponding 
to \eqref{form1-vna1} now follows from Lemma \ref{bin-ser}.

An alternative approach based on the integral representation \ref{int-rep} 
uses the Fa\`{a} di Bruno formula and the partial Bell polynomials. Write
\[\tilde{\psi}(x) = \psi(x) - \log x= \int_0^\infty 
\left(\frac{1}{t}-\frac{1}{1-e^{-t}}\right) e^{-xt}dt, \]
so that $W(z) = \tilde{\psi}(h(z))$ with $h(z) = z + 1/z$ and
\[\tilde{\psi}^{(k)}(x) =  \int_0^\infty (-t)^k 
\left(\frac{1}{t}-\frac{1}{1-e^{-t}}\right) e^{-xt}dt.\]
Define
\begin{equation}\label{int-def}
I_k(z) = \int_0^\infty (-t)^k \left(\frac{1}{t}-\frac{1}{1-e^{-t}}\right) 
e^{-t(z+1/z)}dt =\tilde{\psi}^{(k)}(h(z)) . \end{equation}

The  {\em partial Bell polynomial}  $B_{n,k}$ in the $n-k+1$ variables
 $x_1,\ldots , x_{n-k+1}$ is defined by
$$B_{n,k}(x_1,\ldots , x_{n-k+1})=\sum_{\sigma(n,k)}\frac{n!}{j_1!j_2!\cdots j_{n-k+1}!} \left(\frac{x_1}{1!}\right)^{j_1} \left(\frac{x_2}{2!}\right)^{j_2} \cdots \left(\frac{x_{n-k+1}}{(n-k+1)!}\right)^{j_{n-k+1}},$$
where 
the sum is over the set $\sigma(n,k)$ of all non-negative integer
sequences $j_1,j_2,\ldots,j_{n-k+1}$ such that
$$j_1+j_2+ \cdots +j_{n-k+1}=k \ \ \ \mbox{and}\ \ \
 j_1+2j_2+\cdots+(n-k+1)j_{n-k+1}=n.$$

The Fa\`a di Bruno formula for the $n$-th derivative of the composition $W=\tilde{\psi} \circ h$ is then expressed as
\begin{eqnarray}W^{(n)}(z) &=& \sum_{k=1}^n \tilde{\psi}^{(k)}(h(z)) B_{n,k}\left(h'(z),\cdots , h^{(n-k+1)}(z)\right) \notag \\
&=& \sum_{k=1}^n I_k(z) B_{n,k}\left(h'(z),\cdots , h^{(n-k+1)}(z)\right). \label{Wn}
\end{eqnarray}
The next lemma provides some results on the partial Bell polynomials. A useful reference is \cite{comtet-1974a}, page 133-137.

\begin{lemma}
\begin{equation} \label{Bel-hom}
B_{n,k} (x_1, s t^2 x_2, s t^3 x_3, s t^4 x_4, \cdots ) = s^k t^n B_{n,k}\left(\frac{x_1}{st},x_2,x_3,\cdots \right)
\end{equation}

\begin{equation} \label{Bel-red}
B_{n,k}(x_1, x_2, \ldots )=\frac{n!}{(n-k)!}\sum_{\ell=0}^k \frac{1}{\ell !}x_1^\ell B_{n-k,k-\ell}\left(\frac{x_2}{2},\frac{x_3}{3},\ldots \right)
\end{equation}
\begin{equation} \label{Bel-fac}
B_{n,k}(1!,2!,3!,\ldots)=\binom{n-1}{k-1}\frac{n!}{k!}.
\end{equation}
\end{lemma}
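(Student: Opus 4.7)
The plan is to prove each of the three identities directly from the explicit definition
$$B_{n,k}(x_1,x_2,\ldots)=\sum_{\sigma(n,k)}\frac{n!}{j_1!\,j_2!\cdots j_{n-k+1}!}\prod_{i\ge 1}\left(\frac{x_i}{i!}\right)^{j_i},$$
exploiting the two linear constraints $\sum_i j_i=k$ and $\sum_i i\,j_i=n$ attached to every term.

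For \eqref{Bel-hom}, I would substitute $y_1=x_1$ and $y_i=s\,t^i\,x_i$ for $i\ge 2$ into the defining sum. Each summand then acquires a factor $s^{\sum_{i\ge 2}j_i}t^{\sum_{i\ge 2}i\,j_i}=s^{k-j_1}t^{n-j_1}$, by the two weighting constraints. Pulling out $s^kt^n$ leaves a residual $(st)^{-j_1}$ which can be absorbed into the $x_1$-slot as $(x_1/(st))^{j_1}$, producing the right-hand side $s^kt^nB_{n,k}(x_1/(st),x_2,x_3,\ldots)$. This is essentially a bookkeeping consequence of the standard weighted homogeneity of partial Bell polynomials.

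For \eqref{Bel-red}, the idea is to split the summation on the value $j_1=\ell$ and reindex the tail by $m_i:=j_{i+1}$ for $i\ge 1$. The two constraints then become $\sum_i m_i=k-\ell$ and $\sum_i i\,m_i=n-k$, which are precisely the conditions governing $B_{n-k,k-\ell}$. The multinomial prefactor splits as $n!/(\ell!\prod_i m_i!)$, the $x_1$-factor contributes $x_1^\ell/\ell!$, and since $x_{i+1}/(i+1)!=(x_{i+1}/(i+1))/i!$, the surviving product matches the defining sum of $B_{n-k,k-\ell}(x_2/2,x_3/3,\ldots)$ after multiplying and dividing by $(n-k)!$. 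Collecting the pieces gives the claimed identity. The step most prone to error — and hence the main obstacle here — is the reindexing of both linear constraints together with the factorial shift $i!\leftrightarrow(i+1)!$; once that is carried out correctly, the rest is mechanical.

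For \eqref{Bel-fac}, I would appeal to the standard exponential generating function
$$\sum_{n\ge k}B_{n,k}(x_1,x_2,\ldots)\frac{t^n}{n!}=\frac{1}{k!}\Bigl(\sum_{i\ge 1}x_i\frac{t^i}{i!}\Bigr)^k,$$
which is an immediate consequence of the multinomial theorem. Specializing to $x_i=i!$ collapses the inner series to $\sum_{i\ge 1}t^i=t/(1-t)$, so the right-hand side becomes $t^k/(k!(1-t)^k)$. Extracting the coefficient of $t^n/n!$ and invoking $[t^{n-k}](1-t)^{-k}=\binom{n-1}{k-1}$ yields $B_{n,k}(1!,2!,3!,\ldots)=(n!/k!)\binom{n-1}{k-1}$. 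No real obstacle arises here; the only trick is to recognize the telescoping of the generating series once $x_i=i!$ is substituted.
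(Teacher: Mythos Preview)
Your three arguments are all correct. For \eqref{Bel-hom} your approach coincides with the paper's: both read the identity straight off the definition using the two weight constraints (the paper records this as the single observation $3j_2+4j_3+\cdots=n+k-2j_1$, which is just the sum of your two exponents $k-j_1$ and $n-j_1$).

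Where you differ is in scope rather than method. For \eqref{Bel-red} and \eqref{Bel-fac} the paper does not give a proof at all; it simply cites entries [3.l] and [3.h] of Comtet's \emph{Advanced Combinatorics}. Your self-contained derivations---splitting on $j_1=\ell$ and reindexing $m_i=j_{i+1}$ for \eqref{Bel-red}, and the exponential-generating-function extraction for \eqref{Bel-fac}---are exactly the standard arguments one finds behind those references, and they are carried out cleanly (in particular, your handling of the factorial shift $x_{i+1}/(i+1)!=(x_{i+1}/(i+1))/i!$ and of the constraint $\sum_i i\,m_i=n-k$ is correct). So your write-up is strictly more informative than the paper's at this point, at the cost of a few extra lines; the paper buys brevity by outsourcing to Comtet.
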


\begin{proof}
The proof of \eqref{Bel-hom} follows easily from the definition, noting that 
 $3j_2+4j_3+\cdots = n+k-2j_1$.
Formula \eqref{Bel-red} is entry [3.l] on \cite{comtet-1974a}, and
\eqref{Bel-fac} is entry [3.h].
\end{proof}

\begin{lemma} \label{Bel-der}
The partial Bell polynomials satisfy
\begin{equation}
B_{n,k}\left(h'(z),\cdots , h^{(n-k+1)}(z)\right)=
\frac{(-1)^nn!}{z^{n+k}}\sum_{\ell=0}^k\frac{1}{\ell !}
\binom{n-k-1}{k-\ell-1}\frac{(1-z^2)^\ell}{(k-\ell)!}.
\label{Bel-der0}
\end{equation}
\end{lemma}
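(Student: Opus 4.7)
The plan is to apply the reduction formula \eqref{Bel-red}, which cleanly separates out the dependence on $h'(z) = 1 - 1/z^2$ (the only ``non-monomial'' argument), and then exploit the homogeneity identity \eqref{Bel-hom} together with the closed form \eqref{Bel-fac} to evaluate the resulting Bell polynomials in one stroke.

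First I would record the derivatives of $h(z) = z + 1/z$: one has $h'(z) = -(1-z^2)/z^2$, while for $j \geq 2$
$$h^{(j)}(z) = \frac{(-1)^j j!}{z^{j+1}}, \qquad \frac{h^{(j)}(z)}{j} = \frac{(-1)^j (j-1)!}{z^{j+1}}.$$
Thus the Bell polynomial appearing on the right-hand side of \eqref{Bel-red}, namely $B_{n-k,k-\ell}(h''(z)/2, h'''(z)/3,\ldots)$, has $i$-th argument (for $i \geq 1$)
$$y_i := \frac{(-1)^{i+1} i!}{z^{i+2}}.$$

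Next I would apply \eqref{Bel-hom} with the choices $t = -1/z$ and $s = -1/z^2$. These are arranged so that $s t^i = (-1)^{i+1}/z^{i+2}$ for every $i \geq 1$, which matches $y_i = (st^i) \cdot i!$ uniformly (including $i=1$, where $y_1 = 1/z^3 = st$). Hence \eqref{Bel-hom} gives
$$B_{n-k, k-\ell}(y_1, y_2, \ldots) = s^{k-\ell} t^{n-k} B_{n-k, k-\ell}(1!, 2!, 3!, \ldots),$$
and \eqref{Bel-fac} evaluates the right-hand factor to $\binom{n-k-1}{k-\ell-1}(n-k)!/(k-\ell)!$. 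Collecting the powers of $s$ and $t$ yields
$$B_{n-k, k-\ell}(y_1, y_2, \ldots) = \frac{(-1)^{n-\ell}}{z^{n+k-2\ell}} \binom{n-k-1}{k-\ell-1} \frac{(n-k)!}{(k-\ell)!}.$$

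Finally, I would substitute this into \eqref{Bel-red} and use $(h'(z))^\ell = (-1)^\ell(1-z^2)^\ell/z^{2\ell}$. The signs combine to $(-1)^n$, the $z$-powers combine to $z^{-(n+k)}$, the factorial $(n-k)!$ cancels, and the desired formula \eqref{Bel-der0} emerges. The main obstacle is not conceptual but rather the sign and power-of-$z$ bookkeeping when invoking \eqref{Bel-hom}; the key observation that makes the argument succeed is that $h^{(j)}(z)/j$ for $j \geq 2$ factors cleanly as a geometric sequence in $j$ times $(j-1)!$, so the reduction to $B_{m,p}(1!,2!,\ldots)$ via a single application of \eqref{Bel-hom} is possible.
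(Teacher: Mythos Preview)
Your proof is correct and follows essentially the same strategy as the paper: compute the derivatives of $h$, then combine the three Bell-polynomial identities \eqref{Bel-red}, \eqref{Bel-hom}, and \eqref{Bel-fac}. Your choice $s=-1/z^2$, $t=-1/z$ (applied after \eqref{Bel-red}) differs from the paper's stated $s=-1/z$, $t=1/z$, but this is only a cosmetic difference in bookkeeping---your parametrization in fact avoids an extra sign-homogeneity step and is arguably the cleaner route to the same result.
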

\begin{proof}
Note that $h'(z)=1-z^{-2}$, and $h^{(i)}(z) = (-1)^i i! z^{-i-1}$ for $i>1$.
Hence the result easily follows from \eqref{Bel-hom} 
(with $s=-1/z, t=1/z$), \eqref{Bel-red} and \eqref{Bel-fac}.
\end{proof}

The next result expresses the integrals $I_k(z)$ defined 
in \eqref{int-def} in terms of the Hurwitz zeta function
\begin{equation}
\zeta(s,q) = \sum_{n=0}^{\infty} \frac{1}{(n+q)^{s}}.
\end{equation}

\begin{proposition}
The integral $I_k(z)$ is given by
\begin{eqnarray*}
I_k(z) & = & \frac{(-1)^k \, (k-1)! \, z^{k}}{(z^{2}+1)^{k}} +
(-1)^{k-1} k! \zeta(k+1,z+1/z) \\
& = & (-1)^{k-1}(k-1)!z^{k}
\left[
kz \sum_{m=0}^{\infty} \frac{1}{(z^{2}+mz+1)^{k+1}} -\frac{1}{(z^{2}+1)^{k}} \right].
\end{eqnarray*}
\end{proposition}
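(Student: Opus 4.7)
The plan is a direct computation: split the integrand $\left(\frac{1}{t}-\frac{1}{1-e^{-t}}\right)$ into two pieces and evaluate each piece as a standard integral. Set $a = z + 1/z$ so that $I_k(z) = (-1)^k\int_0^\infty t^{k-1} e^{-ta}\,dt - (-1)^k\int_0^\infty \frac{t^k e^{-ta}}{1-e^{-t}}\,dt$. The first integral is immediately $(-1)^k(k-1)!/a^k$ by the definition of the Gamma function.

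For the second integral, I would expand the geometric series $\frac{1}{1-e^{-t}} = \sum_{m=0}^\infty e^{-mt}$ (valid for $t>0$) and interchange summation and integration. Since all terms are positive when $a>0$ (which holds when $z>0$, where in fact $a\geq 2$ by AM--GM, so there is no convergence issue), this interchange is justified by Tonelli's theorem. Each resulting integral is another Gamma integral, and reassembly gives $\sum_{m=0}^\infty k!/(a+m)^{k+1} = k!\,\zeta(k+1,a)$. Combining with the first piece produces
\begin{equation*}
I_k(z) = \frac{(-1)^k(k-1)!}{a^k} + (-1)^{k-1}k!\,\zeta(k+1,a).
\end{equation*}

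Substituting $a = (z^2+1)/z$ yields $1/a^k = z^k/(z^2+1)^k$, which immediately gives the first closed form stated in the proposition. For the second closed form, I would use the series representation of the Hurwitz zeta function, clear denominators by multiplying numerator and denominator of each term by $z^{k+1}$, obtaining $\zeta(k+1,z+1/z) = z^{k+1}\sum_{m=0}^\infty (z^2+mz+1)^{-(k+1)}$, and then factor $(-1)^{k-1}(k-1)!\,z^k$ out of the whole expression, using $k! = k\cdot (k-1)!$.

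There is no real obstacle here: the proof is entirely mechanical once the integrand is split and the geometric series for $1/(1-e^{-t})$ is inserted. The only subtlety worth mentioning is that one must justify the interchange of summation and integration, and verify that the original integral \eqref{int-def} converges absolutely on the domain of interest. Both follow from the observation that for $z>0$ the quantity $z+1/z\geq 2$, so the exponential factor $e^{-t(z+1/z)}$ dominates all polynomial prefactors, and extension to complex $z$ near the origin (needed for the asymptotic interpretation used elsewhere in the paper) follows by analytic continuation of both sides.
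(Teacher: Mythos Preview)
Your proof is correct and follows essentially the same route as the paper: split the integrand, evaluate the first piece via the Gamma integral, and identify the second piece with $k!\,\zeta(k+1,z+1/z)$. The only cosmetic difference is that the paper cites the integral representation $\zeta(s,q)=\frac{1}{\Gamma(s)}\int_0^\infty \frac{t^{s-1}e^{-qt}}{1-e^{-t}}\,dt$ directly, whereas you re-derive it by expanding the geometric series and summing termwise; the content is identical.
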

\begin{proof}
The definition of the gamma function as
\begin{equation}
\Gamma(s) = \int_{0}^{\infty} t^{s-1} e^{-t} \, dt
\end{equation}
\noindent
and the integral representation for the Hurwitz zeta function
\begin{equation} \label{Hur-int}
\zeta(s,q) = \frac{1}{\Gamma(s)} \int_{0}^{\infty} \frac{t^{s-1} \,
e^{-qt} \, dt}{1 - e^{-t}}
\end{equation}
\noindent
are used in
\begin{equation*}
I_{k}(z) = (-1)^{k} \int_{0}^{\infty} t^{k-1} e^{-t(z+1/z)} \, dt -
(-1)^{k} \int_{0}^{\infty} \frac{t^{k}}{1 - e^{-t}} e^{-t(z+1/z)} \, dt
\end{equation*}
\noindent
to obtain the result.
\end{proof}

The integrals $I_k(z)$ are now expressed in terms of the Bernoulli numbers.
The proof is similar to the one given for Lemma \ref{bin-ser}, so the 
details are omitted. 

\begin{proposition}
The identity 
\begin{equation} 
\label{Ik}
I_k(z)=(-1)^{k-1} k! \left(\frac{z}{z^2+1}\right)^{k+1}\left(\frac{1}{2} +\sum_{i=1}^\infty \frac{B_{2i}}{k+2i}\binom{k+2i}{k}\left(\frac{z}{z^2+1}\right)^{2i-1}\right)
\end{equation}
\noindent
holds.
\end{proposition}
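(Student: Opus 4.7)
The plan is to establish \eqref{Ik} as an asymptotic expansion as $z\to 0^{+}$, by mimicking the computation of $W(z)$ given earlier in this section. First, set $s = z/(z^{2}+1)$, so that $z + 1/z = 1/s$, and perform the change of variable $t = sx$ in the defining integral \eqref{int-def}. This yields
\begin{equation*}
I_k(z) = (-1)^k s^{k+1} \int_0^{\infty} x^k \left(\frac{1}{sx} - \frac{1}{1-e^{-sx}}\right) e^{-x}\, dx.
\end{equation*}

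Next I would expand the bracketed expression via the generating function for the Bernoulli polynomials at $x=1$: from $\frac{sx\, e^{sx}}{e^{sx}-1} = \sum_{n=0}^{\infty} \frac{B_n(1)(sx)^n}{n!}$ one obtains $\frac{1}{sx} - \frac{1}{1-e^{-sx}} = -\sum_{n=1}^{\infty} \frac{B_n(1)(sx)^{n-1}}{n!}$. Substituting this series formally into the integral, interchanging summation and integration, and using $\int_0^{\infty} x^{n+k-1}e^{-x}\,dx = (n+k-1)!$ produces
\begin{equation*}
I_k(z) \sim (-1)^{k+1} s^{k+1} \sum_{n=1}^{\infty} \frac{B_n(1)(n+k-1)!}{n!}\, s^{n-1}.
\end{equation*}
Now $B_1(1) = 1/2$, while $B_n(1) = (-1)^n B_n$ together with $B_{2j+1} = 0$ for $j \geq 1$ kills all remaining odd indices. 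Isolating the $n=1$ contribution, reindexing the remainder by $n = 2i$, and applying the elementary identity $\frac{(2i+k-1)!}{k!(2i)!} = \frac{1}{k+2i}\binom{k+2i}{k}$, together with the back-substitution $s = z/(z^{2}+1)$, yields the right-hand side of \eqref{Ik}.

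The main obstacle is the interchange of sum and integral in the second step. As the authors already observe in the earlier derivation of $W(z)$, the Bernoulli series $\sum_{n\ge1} B_n(1)(sx)^{n-1}/n!$ has radius of convergence zero, so the manipulation cannot be justified by dominated convergence. The standard remedy is to truncate the expansion of $\frac{1}{sx} - \frac{1}{1-e^{-sx}}$ at an arbitrary order $N$, bound the remainder uniformly in $x$ on $(0,\infty)$ with enough decay to be absorbed by the factor $x^k e^{-x}$, and verify that the resulting error contribution to $I_k(z)$ is $O(s^{N+k+1})$ as $s \to 0^{+}$. This is exactly a Watson-lemma style argument and parallels the implicit justification already used for $W(z)$ above; consequently, the present proof, just as in the case of $W(z)$, is appropriately sketched rather than spelled out in detail.
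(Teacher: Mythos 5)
Your computation is correct and follows exactly the route the paper intends: the paper omits the proof, remarking only that it parallels the earlier derivation (the change of variable $t=sx$ with $s=z/(z^2+1)$, expansion via the Bernoulli generating function at $x=1$, and term-by-term integration $\int_0^\infty x^{n+k-1}e^{-x}\,dx=(n+k-1)!$), which is precisely what you carry out. Your explicit acknowledgment that the resulting series has radius of convergence zero, so that \eqref{Ik} must be read as an asymptotic expansion justified by truncation in the style of Watson's lemma, is a point the paper leaves implicit.
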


According to \eqref{Wn}, the $n$-th derivative of $W(z)$ is obtained by 
multiplying  \eqref{Bel-der0} and \eqref{Ik} and summing over $k$.
The coefficients $v_{2n}$ are then found as 
$$v_{2n}=\frac{W^{(2n)}(0)}{(2n)!} +\frac{(-1)^{n-1}}{n}.$$

In order to find explicit formulas for $W^{(2n)}(0)$,  (\ref{Bel-der}) and (\ref{Ik}) are expanded in powers of $z$, and then the constant term 
in the sum is selected. Note that (\ref{Bel-der}) is of order $z^{-n-k}$ as $z\rightarrow 0$, while (\ref{Ik}) is of order $z^{k+1}$.
So the product is of order $z^{-(n-1)}$. Since $W^{(n)}(z)$ is bounded as $z\rightarrow 0$,  after summing over $k$ 
all coefficients of $z^i$ for $i<0$ must vanish.

The computations to derive $v_{2n}$ with this approach are trivial but lengthy, and the resulting expression (involving multiple nested sums of
binomial coefficients) is not particularly illuminating, so they are omitted. 
The vanishing of the coefficients of negative powers comparing it with 
\eqref{form1-vna1} yields a 
family of identities.

\begin{proposition}
Let 
\[
\begin{array}{l}
A(i,j,k,\ell,m,r)\\[3ex ]
=\displaystyle{(-1)^{i+j+k} \binom{k}{\ell} \binom{\ell}{r} \binom{k+2i}{k} \binom{2m-k-1}{k-\ell-1}\binom{k+i+m-r-j-1}{k+2i-1}\frac{B_{2i}}{k+2i}}.
\end{array}
\]
Then
\[
\sum_{k=1}^{2m} \sum_{\ell=0}^k \sum_{r=0}^m \sum_{i=1}^{m-r-j} A(i,j,k,\ell,m,r)=\left\{\begin{array}{lcr} 0 & \mbox{if} & j>0,\\
\displaystyle{\sum_{s=1}^m (-1)^s \binom{m+s-1}{m-s}\frac{B_{2s}}{2s}} & \mbox{if} & j=0. \end{array}\right.
\]
\end{proposition}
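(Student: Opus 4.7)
The plan is to compute the Laurent expansion of $W^{(2m)}(z)$ around $z = 0$ in two ways and match coefficients. By Faà di Bruno's formula \eqref{Wn} with $n = 2m$, together with the explicit expressions \eqref{Bel-der0} for the Bell polynomials and \eqref{Ik} for the integrals $I_k(z)$, the function $W^{(2m)}(z)$ is expressed as a finite sum of rational expressions in $z$ weighted by Bernoulli numbers. On the other hand, $W(z) = V(z) - \log(z^2+1)$ admits an asymptotic expansion in non-negative powers of $z$ (established in Section \ref{sec-int-rep}), so the same is true of $W^{(2m)}(z)$ after term-by-term differentiation. Consequently, the coefficient of $z^{-2j}$ in the first representation must vanish for $j > 0$, and for $j = 0$ it must equal the constant term $(2m)!\,w_{2m}$, which is determined via $v_{2m} = w_{2m} + (-1)^{m-1}/m$ and the closed form \eqref{form1-vna1}.

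To produce the stated quadruple sum explicitly, I would expand the $i \ge 1$ piece of \eqref{Ik} (the $\tfrac12$ term is handled separately, as it accounts precisely for the $(-1)^{m-1}/m$ shift) by using the binomial series
\[
\left(\frac{z}{z^{2}+1}\right)^{k+2i} = z^{k+2i}\sum_{p\ge 0}(-1)^{p}\binom{k+2i+p-1}{p}z^{2p},
\]
and simultaneously expand $(1-z^{2})^{\ell} = \sum_{r}(-1)^{r}\binom{\ell}{r}z^{2r}$ inside \eqref{Bel-der0}. Multiplying the two factors produces a generic $z$-exponent of $-2m+2r+2i+2p$; setting this equal to $-2j$ forces $p = m-r-i-j$, which is non-negative precisely when $i \le m-r-j$, matching the upper bound on the $i$-sum in the proposition. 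The identity $\binom{k+2i+p-1}{p} = \binom{k+i+m-r-j-1}{k+2i-1}$ gives the last binomial, while pairing the $k!$ from \eqref{Ik} with $\ell!(k-\ell)!$ from \eqref{Bel-der0} produces $\binom{k}{\ell}$, reproducing the remaining prefactor structure of $A(i,j,k,\ell,m,r)$. The four sign contributions $(-1)^{2m}$, $(-1)^{k-1}$, $(-1)^{r}$, $(-1)^{m-r-i-j}$ collapse to $(-1)^{m-1}(-1)^{i+j+k}$, so after extracting the uniform factor $(-1)^{m-1}(2m)!$ the identity takes the stated form. Extending the $r$-summation to $r\le m$ is harmless because $\binom{\ell}{r}=0$ for $r>\ell$.

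The two cases now follow: for $j > 0$ the analyticity of $W^{(2m)}(z)$ at $z = 0$ (as an asymptotic expansion) forces the quadruple sum to vanish, while for $j = 0$ the constant term equals $(2m)!\,w_{2m}$, and comparison with \eqref{form1-vna1} through $v_{2m} = w_{2m} + (-1)^{m-1}/m$ yields $\sum_{s=1}^{m}(-1)^{s}\binom{m+s-1}{m-s}B_{2s}/(2s)$. The main obstacle is essentially bookkeeping: tracking four summation indices, their ranges, and the accumulated signs through the binomial expansions so that they reproduce the exact form of $A(i,j,k,\ell,m,r)$. A secondary technical point is justifying that term-by-term differentiation of the asymptotic expansion of $W(z)$ produces the asymptotic expansion of $W^{(2m)}(z)$, which follows from the integral representation \eqref{int-def} and standard results on differentiating asymptotic series of smooth functions.
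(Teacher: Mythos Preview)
Your approach is exactly the one the paper sketches (and then omits): expand $W^{(2m)}(z)=\sum_k I_k(z)\,B_{2m,k}(h',\ldots)$ using \eqref{Bel-der0} and \eqref{Ik}, extract the coefficient of $z^{-2j}$, and compare with the known asymptotic expansion of $W$. Your bookkeeping for the indices, the identification $p=m-r-i-j$, the binomial symmetry giving $\binom{k+i+m-r-j-1}{k+2i-1}$, and the collapse of signs to $(-1)^{m-1}(-1)^{i+j+k}$ are all correct.

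One small slip: your parenthetical claim that the $\tfrac12$ term in \eqref{Ik} ``accounts precisely for the $(-1)^{m-1}/m$ shift'' is not right. That $\tfrac12$ piece, when multiplied by the Bell factor, contributes only \emph{odd} powers of $z$ (the parities of $z^{-2m-k+2r}$ and $z^{k+1+2p}$ combine to odd), so it contributes nothing to the coefficients of $z^{-2j}$ for any $j\ge 0$. The $(-1)^{m-1}/m$ shift between $v_{2m}$ and $w_{2m}$ comes instead from the $\log(z^{2}+1)$ term in $V(z)=W(z)+\log(z^{2}+1)$. This does not affect your argument, since you correctly land on $w_{2m}$ and convert via $v_{2m}=w_{2m}+(-1)^{m-1}/m$, but the attribution should be fixed.
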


\section{Calculation of $v_{n}$ by Hoppe's formula}
\label{sec-faa}


The function $V(z)$ in \eqref{V-def} can be written as
\begin{equation}
V(z) = F(g(z)) + \log(z^{2}+1)
\label{new-V00}
\end{equation}
\noindent
with
\begin{equation}
F(z) = \psi \left( \frac{1}{z} \right) + \log z \text{ and }
g(z) = \frac{z}{z^{2}+1}.
\label{F-def0}
\end{equation}
\noindent
The expansion
\begin{equation}
\log(z^{2}+1) = \sum_{n=1}^{\infty} \frac{(-1)^{n-1}}{n} z^{2n}
\end{equation}
\noindent
is elementary, therefore the coefficients $v_{n}$ in the expansion
\eqref{V-asym} are now evaluated from $F(g(z))$.

Hoppe's formula for the derivative of compositions of functions is stated
in the next theorem. See \cite{johnsonw-2002a} for details. 

\begin{theorem}
\label{hoppe-thm}
Assume that all derivatives of $g$ and $F$ exist, then
\begin{equation} \label{hoppe-form}
\left( \frac{d}{dz} \right)^{n} F(g(z)) =
\sum_{k=0}^{n} \frac{P_{n,k}(g(z))}{k!}
F^{(k)}(g(z)),
\end{equation}
\noindent
where
\begin{equation}
P_{n,k}(g(z)) = \sum_{j=0}^{k} (-1)^{k-j} \binom{k}{j}
\left[ g(z) \right]^{k-j}
\left( \frac{d}{dz} \right)^{n} \left[ g(z) \right]^{j}
\end{equation}
\noindent
and $P_{n,0}(0) = 0$ for $n > 0$.
\end{theorem}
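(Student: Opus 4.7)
The plan is to derive Hoppe's formula by expanding $F(g(z+h))$ in two different ways in powers of $h$ and then comparing the coefficient of $h^n/n!$. On one hand, the ordinary Taylor expansion in $h$ at $h=0$ yields
$$F(g(z+h)) = \sum_{n=0}^{\infty} \frac{h^n}{n!}\left(\frac{d}{dz}\right)^n F(g(z)),$$
while Taylor-expanding $F$ about $u=g(z)$ and then substituting $u=g(z+h)$ gives
$$F(g(z+h)) = \sum_{k=0}^{\infty} \frac{F^{(k)}(g(z))}{k!}\bigl(g(z+h)-g(z)\bigr)^{k}.$$

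Next I would expand $\bigl(g(z+h)-g(z)\bigr)^k$ by the binomial theorem, obtaining
$$\bigl(g(z+h)-g(z)\bigr)^{k} = \sum_{j=0}^{k}(-1)^{k-j}\binom{k}{j}g(z)^{k-j}g(z+h)^{j},$$
and then apply $\partial^n/\partial h^n$ at $h=0$. Since
$$\left.\frac{\partial^n}{\partial h^n}\right|_{h=0} g(z+h)^{j} = \left(\frac{d}{dz}\right)^n g(z)^j,$$
the $h^n/n!$-coefficient of $\bigl(g(z+h)-g(z)\bigr)^k$ is precisely $P_{n,k}(g(z))$. Moreover, because $g(z+h)-g(z)$ has a zero in $h$ at $h=0$, its $k$-th power vanishes to order at least $k$, so $P_{n,k}(g(z))=0$ whenever $k>n$; this truncates the outer sum to $k\le n$ and produces the formula. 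The boundary statement $P_{n,0}(g(z)) = (d/dz)^n 1 = 0$ for $n>0$ is immediate from the definition.

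The main obstacle is the rigorous justification of interchanging differentiation with the outer infinite sum, since $F$ is only assumed to be smooth, not analytic. I would handle this either by replacing the Taylor series of $F$ with a finite Taylor polynomial plus Peano remainder, so that only $k\le n$ terms appear from the outset, or, alternatively, by a direct induction on $n$. In the inductive approach the base case $n=1$ is simply the chain rule, and the inductive step reduces to verifying the recursion
$$P_{n+1,k}(g(z)) = \frac{d}{dz}P_{n,k}(g(z)) + k\,g'(z)\,P_{n,k-1}(g(z)),$$
which in turn rests on the elementary identity $(k-j)\binom{k}{j} = k\binom{k-1}{j}$.
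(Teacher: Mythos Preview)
Your proposal is correct. The paper does not actually prove this theorem: it states Hoppe's formula and refers the reader to \cite{johnsonw-2002a} for details. What you have supplied is a standard and complete argument where the paper gives none.

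A brief comment on your two suggested routes around the analyticity issue. The inductive approach is the cleaner of the two here, and your claimed recursion
\[
P_{n+1,k}(g(z)) = \frac{d}{dz}P_{n,k}(g(z)) + k\,g'(z)\,P_{n,k-1}(g(z))
\]
is exactly what falls out when you differentiate the defining sum for $P_{n,k}$ term by term, split according to the product rule, and use $(k-j)\binom{k}{j}=k\binom{k-1}{j}$ as you indicate. Combined with the chain rule as base case, this makes the formula a purely algebraic identity requiring only that the relevant derivatives exist, with no convergence questions at all. The Taylor-with-remainder variant also works under the stated hypothesis that all derivatives exist, but you would want the integral (or Lagrange) form of the remainder rather than the Peano form, since you need to differentiate the remainder $n$ times in $h$ and conclude that the result vanishes at $h=0$; the Peano $o$-estimate alone does not obviously survive $n$ differentiations.
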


Hoppe's formula is now used to compute the $n$-th derivative of the function
$F(g(z))$, where $F$ is defined in \eqref{F-def0} and $g(z) = z/(z^{2}+1)$.
The formula requires
\begin{equation}
F^{(k)}(z)=\left( \frac{d}{dz} \right)^{k} F(z)
\text{ and }
\left( \frac{d}{dz} \right)^{n} \left[ g(z)  \right]^{j}.
\end{equation}
\noindent
These terms are computed next.

\begin{lemma}
Let $F(z) = \psi(1/z) + \log z$ and $\psi_{r}(z) = \left( \frac{d}{dz}
\right)^{r} \psi(z)$. Then, if $k\geq 1$,
\begin{equation}
F^{(k)}(z) =
\frac{(-1)^{k} k!}{z^{k}} \sum_{r=1}^{k} \frac{1}{r! z^{r}}
\binom{k-1}{r-1} \psi_{r} \left( \frac{1}{z} \right) +
\frac{(-1)^{k-1} (k-1)!}{z^{k}}.
\end{equation}
\end{lemma}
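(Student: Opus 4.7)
The plan is to prove the formula by induction on $k \geq 1$. The base case $k=1$ is a direct application of the chain rule: $F'(z) = -z^{-2}\psi_{1}(1/z) + z^{-1}$, which matches the claimed expression with the single summand indexed by $r=1$ (the binomial $\binom{0}{0}=1$) and the ``logarithmic'' term $1/z$.

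For the inductive step, I would assume the formula holds for some $k \geq 1$ and differentiate both sides with respect to $z$. The ``logarithmic'' contribution $(-1)^{k-1}(k-1)!/z^{k}$ differentiates to $(-1)^{k}k!/z^{k+1}$, producing precisely the corresponding term in the formula for $k+1$. Each summand $\frac{(-1)^{k}k!}{r!\, z^{k+r}}\binom{k-1}{r-1}\psi_{r}(1/z)$ yields two contributions after differentiation: one from the power $z^{-(k+r)}$, which preserves the index $r$, and one from $\psi_{r}(1/z)$, whose $z$-derivative by the chain rule equals $-z^{-2}\psi_{r+1}(1/z)$ and so raises the index to $r+1$.

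Collecting the coefficient of $\psi_{r}(1/z)/z^{k+r+1}$ in $F^{(k+1)}(z)$, contributions arise from the index-$r$ term (via the derivative of the power of $z$) and from the index-$(r-1)$ term (via the chain rule applied to $\psi_{r-1}$). After simplification, matching with the target coefficient $(-1)^{k+1}(k+1)!/(r!\, z^{k+r+1})\binom{k}{r-1}$ reduces to the binomial identity
\begin{equation*}
(k+r)\binom{k-1}{r-1} + r\binom{k-1}{r-2} = (k+1)\binom{k}{r-1}.
\end{equation*}
This identity is a routine consequence of Pascal's rule $\binom{k}{r-1}=\binom{k-1}{r-1}+\binom{k-1}{r-2}$ together with the factorial relation $(r-1)\binom{k-1}{r-1} = (k-r+1)\binom{k-1}{r-2}$.

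The main obstacle is book-keeping at the boundary indices $r=1$ and $r=k+1$: the former must reconcile with the differentiated ``logarithmic'' term (there is no $\psi_{0}$ contribution to absorb), and the latter receives only the chain-rule contribution from the $r=k$ summand in $F^{(k)}(z)$, so the combinatorial identity above must be checked to degenerate correctly (noting that $\binom{k-1}{k}=0$). Both boundary cases can be verified by direct substitution. As an alternative, one could bypass the induction entirely by applying Hoppe's formula (Theorem \ref{hoppe-thm}) with $g(z)=1/z$, which reduces the task to computing $P_{k,r}(1/z)$ in closed form using $(d/dz)^{k}z^{-j} = (-1)^{k}\frac{(j+k-1)!}{(j-1)!}z^{-j-k}$; this route ultimately requires a sum-free evaluation of the same binomial type and is no shorter than the induction above.
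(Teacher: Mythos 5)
Your induction is correct, but it takes a genuinely different route from the paper. The paper proves this lemma by applying Hoppe's formula (Theorem \ref{hoppe-thm}) directly with $g(z)=1/z$ — precisely the alternative you mention in passing at the end — computing
\begin{equation*}
P_{k,r}\left(\tfrac{1}{z}\right)=\sum_{\ell=0}^{r}(-1)^{r-\ell}\binom{r}{\ell}z^{-(r-\ell)}\left(\tfrac{d}{dz}\right)^{k}z^{-\ell}
=\frac{(-1)^{k}}{z^{r+k}}\,k!\binom{k-1}{r-1}
\end{equation*}
for $r\ge 1$ (and $P_{k,0}(1/z)=0$), which rests on the sum-free evaluation
$\sum_{\ell=0}^{r}(-1)^{\ell}\frac{r!\,(\ell+k-1)!}{\ell!\,(r-\ell)!\,(\ell-1)!}=(-1)^{r}k!\binom{k-1}{r-1}$; the $\log z$ term then supplies $(-1)^{k-1}(k-1)!/z^{k}$. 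Your approach replaces that closed-form binomial evaluation with the two-term recurrence $(k+r)\binom{k-1}{r-1}+r\binom{k-1}{r-2}=(k+1)\binom{k}{r-1}$, which you verify correctly via Pascal's rule and the relation $(r-1)\binom{k-1}{r-1}=(k-r+1)\binom{k-1}{r-2}$; the boundary indices $r=1$ and $r=k+1$ degenerate correctly with the convention $\binom{k-1}{-1}=\binom{k-1}{k}=0$, and the logarithmic term reproduces itself under differentiation, as you note. What each approach buys: yours is self-contained and elementary, needing only the product and chain rules plus a Pascal-type identity, whereas the paper's fits the lemma into the Hoppe machinery already set up in that section and delivers all coefficients at once — at the cost of an alternating binomial sum of comparable difficulty to your recurrence. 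Both are sound; neither is materially shorter.
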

\begin{proof}
Hoppe's formula gives
\begin{equation}
\left( \frac{d}{dz} \right)^{k}
\psi \left( \frac{1}{z} \right) =
\sum_{r=0}^{k} \frac{1}{r!} P_{k,r} \left( \frac{1}{z} \right)
\times \left( \frac{d}{dz} \right)^{r}  \psi(z) \Big{|}_{z \to 1/z}
\end{equation}
\noindent
with
\begin{eqnarray}
P_{k,r} \left( \frac{1}{z} \right)  & = &
\sum_{\ell=0}^{r} (-1)^{r - \ell} \binom{r}{\ell}
\left( \frac{1}{z} \right)^{r - \ell}
\left( \frac{d}{dz} \right)^{k} \left[ \frac{1}{z^{\ell}} \right] \label{p-1} \\
& = & \frac{(-1)^{r+k}}{z^{r+k}} \sum_{l=0}^{r} (-1)^{\ell}
\frac{r! \, (\ell +k-1)!}{\ell ! (r - \ell)! (\ell - 1)!} \nonumber \\
& = & \frac{(-1)^{k}}{z^{r+k}} k! \binom{k-1}{r-1} \nonumber
\end{eqnarray}
\noindent
for $r \geq 1$ and $P_{k,0}(1/z) = 0$. The last step uses the evaluation
\begin{equation}
\sum_{\ell = 0}^{r} \frac{(-1)^{\ell} r! \, (\ell + k - 1)!}
{\ell ! \, (r - \ell)! \, (\ell - 1)!} =
(-1)^{r} k! \, \binom{k-1}{r-1}.
\end{equation}
\end{proof}

\begin{lemma}
For $g(z) = z/(z^{2}+1)$ and $n, \, j \in \mathbb{N}$:
\begin{eqnarray*}
\left( \frac{d}{dz} \right)^{n} [g(z)]^{j}  & = &
n! \sum_{r=0}^{\infty} (-1)^{r} \binom{j+r-1}{r} \binom{2r+j}{n}
z^{2r+j-n}.
\end{eqnarray*}
\end{lemma}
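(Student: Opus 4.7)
The plan is to compute $[g(z)]^j$ as an explicit power series and then differentiate term by term. The starting point is the factorization
\begin{equation*}
[g(z)]^{j} = \frac{z^{j}}{(z^{2}+1)^{j}} = z^{j} (1+z^{2})^{-j},
\end{equation*}
which is well-defined as a convergent power series for $|z|<1$.

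First I would apply the generalized binomial theorem to the factor $(1+z^{2})^{-j}$, which gives
\begin{equation*}
(1+z^{2})^{-j} = \sum_{r=0}^{\infty} \binom{-j}{r} z^{2r} = \sum_{r=0}^{\infty} (-1)^{r}\binom{j+r-1}{r} z^{2r},
\end{equation*}
using the standard identity $\binom{-j}{r}=(-1)^{r}\binom{j+r-1}{r}$. Multiplying by $z^{j}$ yields the power series expansion
\begin{equation*}
[g(z)]^{j} = \sum_{r=0}^{\infty} (-1)^{r}\binom{j+r-1}{r} z^{2r+j}.
\end{equation*}

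Next I would differentiate $n$ times. Since the series converges absolutely on a disc about $z=0$, term-by-term differentiation is justified, and
\begin{equation*}
\left(\frac{d}{dz}\right)^{n} z^{2r+j} = (2r+j)(2r+j-1)\cdots(2r+j-n+1)\, z^{2r+j-n} = n!\binom{2r+j}{n}\, z^{2r+j-n},
\end{equation*}
where the binomial coefficient is understood to vanish when $2r+j<n$, in agreement with the fact that the product of falling factors contains a zero in that range. Substituting this back gives precisely
\begin{equation*}
\left(\frac{d}{dz}\right)^{n} [g(z)]^{j} = n! \sum_{r=0}^{\infty} (-1)^{r}\binom{j+r-1}{r}\binom{2r+j}{n}\, z^{2r+j-n},
\end{equation*}
which is the asserted formula.

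There is no real obstacle here; the statement is essentially a bookkeeping identity once the binomial series is applied. The only mild care needed is the convention $\binom{2r+j}{n}=0$ for $2r+j<n$, which ensures the right-hand side has no spurious contributions from indices where the derivative vanishes, and the justification of term-by-term differentiation, which follows from absolute convergence on a neighborhood of the origin.
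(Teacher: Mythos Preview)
Your proof is correct and follows essentially the same approach as the paper: expand $(1+z^{2})^{-j}$ by the binomial series, multiply by $z^{j}$, and differentiate term by term. You include a bit more justification (the convention $\binom{2r+j}{n}=0$ for $2r+j<n$ and the validity of termwise differentiation), but the argument is identical in substance.
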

\begin{proof}
The binomial theorem gives
\begin{equation}
\left( \frac{z}{z^{2}+1} \right)^{j}  = z^{j} (1 + z^{2})^{-j} =
\sum_{r=0}^{\infty} (-1)^{r} \binom{j+r-1}{j-1} z^{2r+j}.
\end{equation}
\noindent
Differentiating $n$ times yields the result.
\end{proof}

The terms in Theorem \ref{hoppe-thm} are now written as
\begin{eqnarray}
F^{(k)}(g(z)) & = &
(-1)^{k}k! \sum_{r=1}^{k} \frac{\binom{k-1}{r-1}}{r!} \frac{(z^{2}+1)^{k+r}}
{z^{k+r}} \psi_{r} \left( \frac{z^{2}+1}{z} \right)    \label{Fk} \\
 &  & + (-1)^{k-1}(k-1)!  \frac{(z^{2}+1)^{k}}{z^{k}},  \ \ \mbox{for} \ k\geq 1 ,\notag
\end{eqnarray}
\noindent
and
\begin{equation} \label{Pnk}
P_{n,k}(g(z)) = z^{k-n}
\sum_{j=0}^{k} \frac{(-1)^{k-j} \binom{k}{j} n!}{(z^{2}+1)^{k-j}}
\, \sum_{r=0}^{\infty} (-1)^{r} \binom{j+r-1}{r} \binom{2r+j}{n} z^{2r}.
\end{equation}

\noindent
The sum
\[\frac{1}{(2n)!}\sum_{k=1}^{2n} \frac{1}{k!} F^{(k)}(g(z) P_{2n,k}(g(z)),\]
 with $F^{(k)}(g(z))$ and $ P_{2n,k}(g(z))$ given by (\ref{Fk}) and (\ref{Pnk}), is expanded in powers of $z$. The constant term gives an 
expression for $v_{2n}$.

\section{An alternative approach to the valuations of $v_{n}$}
\label{sec-vn}

The result of Theorem \ref{thm:2adic} is discussed here starting from
a recurrence for $z_{n} = 4nv_{2n}$.  Using
Legendre inverse relations found in Table 2.5 of \cite{riordan-1968a},
the formula \eqref{form1-vna1} for $v_{2n}$, namely
\begin{equation}
v_{2n} = (-1)^{n+1} \left[
\frac{1}{n} + \sum_{k=1}^{n} (-1)^{k}
\binom{n+k-1}{n-k} \frac{B_{2k}}{2k} \right],
\label{vn-even}
\end{equation}
is inverted to express the Bernoulli numbers in
terms of the coefficients $v_{n}$.  The authors wish to thank M. Rogers who
pointed us to this inversion in \cite{rogers-2005a}.

\begin{lemma}
\label{inversion}
If
\begin{equation}
\frac{a_{n}}{2n} = \sum_{k=1}^{n} \binom{n+k-1}{n-k} \frac{b_{k}}{2k}
\end{equation}
\noindent
then
\begin{equation}
b_{n} = \sum_{k=1}^{n} (-1)^{n-k} \binom{2n}{n+k} a_{k}.
\end{equation}
\end{lemma}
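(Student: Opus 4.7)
The plan is to reduce the inversion to a single combinatorial identity and then establish that identity through the Chebyshev formalism already developed in \cite{dixit-2012a}. Substituting the hypothesis into the proposed formula for $b_n$ and collecting coefficients of each $b_k$, the claim is equivalent to
\[
\sum_{j=k}^{n} (-1)^{n-j}\binom{2n}{n+j}\,\frac{j}{k}\binom{j+k-1}{j-k} = \delta_{n,k}, \qquad 1 \le k \le n.
\]
Both the forward and the proposed inverse maps are lower triangular with constant entries, so it suffices to verify the inversion on any convenient basis; the natural choice is $b_k = x^k$. Using the elementary simplification $\binom{n+k-1}{n-k}/(2k) = \binom{n+k}{2k}/(n+k)$, the hypothesis becomes $a_n/(2n) = \sum_{k=1}^{n} \binom{n+k}{2k}\, b_k/(n+k)$, and Lemma~9.1 of \cite{dixit-2012a} then gives $a_n = 2T_n(y) - 2$ with $y = x/2+1$ once the $r=0$ contribution is separated.

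What remains is the polynomial identity
\[
\bigl(2(y-1)\bigr)^{n} = (-1)^{n}\binom{2n}{n} + 2\sum_{k=1}^{n}(-1)^{n-k}\binom{2n}{n+k}\, T_k(y).
\]
Setting $y=\cos\theta$, so that $2(y-1) = -4\sin^2(\theta/2)$, I would expand
\[
(-4)^n \sin^{2n}(\theta/2) = \bigl(e^{i\theta/2}-e^{-i\theta/2}\bigr)^{2n} = \sum_{j=0}^{2n}(-1)^j\binom{2n}{j}\, e^{i(j-n)\theta},
\]
reindex by $k = j-n$, and pair conjugate exponentials using $\binom{2n}{n-k}=\binom{2n}{n+k}$. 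This produces the displayed expansion, which then extends to all $y$ as a polynomial identity.

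Combining this with $a_k = 2T_k(y) - 2$, the sum $\sum_{k=1}^n (-1)^{n-k}\binom{2n}{n+k}\, a_k$ reproduces $x^n = (2(y-1))^n$ provided the spurious constant $(-1)^n\binom{2n}{n}$ is absorbed by the $-2$ contributions. This is exactly the content of the cancellation
\[
(-1)^n\binom{2n}{n} + 2\sum_{k=1}^n(-1)^{n-k}\binom{2n}{n+k} = 0,
\]
which is immediate from $(1-1)^{2n}=0$ together with the symmetry of $\binom{2n}{\cdot}$. The inversion then holds on the basis $\{x^k\}$, hence in general by linearity. The main obstacle I anticipate is purely organizational: keeping track of the off-by-one arising from the sums starting at $k=1$ rather than $k=0$ and ensuring the two constant terms cancel correctly at the end. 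A direct WZ-style verification of the combinatorial identity is a viable alternative, but the Chebyshev route slots naturally into the framework of the paper.
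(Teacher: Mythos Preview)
Your proof is correct. The paper itself does not actually prove this lemma: it simply cites Table~2.5 of Riordan~\cite{riordan-1968a}, where the pair appears among the Legendre inverse relations, and thanks Rogers~\cite{rogers-2005a} for the pointer. Your route is therefore genuinely different and considerably more self-contained. Rather than invoking a tabulated inverse pair, you verify the orthogonality directly by evaluating both transforms on the generating basis $b_k = x^k$: Lemma~9.1 of~\cite{dixit-2012a} identifies the forward image as $a_n = 2T_n(x/2+1)-2$, and the expansion of $(e^{i\theta/2}-e^{-i\theta/2})^{2n}$ then yields the Chebyshev decomposition of $(2(y-1))^n$ needed to invert. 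The residual constant term is disposed of exactly as you say, via $(1-1)^{2n}=0$ and the symmetry of the central binomials. The payoff of your approach is that it explains \emph{why} the inverse has this particular shape and ties it to the Chebyshev machinery already central to the paper; the payoff of the paper's citation is brevity. Your anticipated off-by-one bookkeeping around $k=0$ is handled cleanly and poses no real obstacle.
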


The inversion formula is used next to obtain a recurrence for a
slight modification of the coefficients $v_{2n}$.

\begin{theorem}
\label{def-zn}
Define $z_{n} = 4nv_{2n} = - 8nB_{2n}^{*}(-1)$. Then
$z_{n}$ satisfies the recurrence
\begin{equation}
z_{n} = 2\binom{2n}{n} - \sum_{k=1}^{n-1} \binom{2n}{n+k} z_{k} - 2B_{2n}.
\label{recu-zn}
\end{equation}
\end{theorem}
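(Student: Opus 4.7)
The plan is to derive the recurrence \eqref{recu-zn} by Legendre-inverting the closed form \eqref{form1-vna1} for $v_{2n}$. Since \eqref{form1-vna1} expresses $v_{2n}$ as a linear combination of $B_{2k}/(2k)$ for $k\le n$, Lemma \ref{inversion} lets us solve for $B_{2n}$ in terms of the numbers $z_k=4k\,v_{2k}$; isolating the $k=n$ term in the resulting expression then yields the desired recurrence for $z_n$.

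Step one is to put \eqref{form1-vna1} into the hypothesis format of Lemma \ref{inversion}. Multiplying \eqref{form1-vna1} by $(-1)^{n+1}$ and moving the $1/n$ term to the left side yields
\[
(-1)^{n+1}v_{2n}-\tfrac{1}{n}=\sum_{k=1}^{n}\binom{n+k-1}{n-k}\frac{(-1)^{k}B_{2k}}{2k}.
\]
Set $b_{k}:=(-1)^{k}B_{2k}$ and $a_{n}:=2n\bigl[(-1)^{n+1}v_{2n}-1/n\bigr]=(-1)^{n+1}(2n)\,v_{2n}-2$. Then this identity is exactly $a_n/(2n)=\sum_{k=1}^n\binom{n+k-1}{n-k}b_k/(2k)$, which is the hypothesis of Lemma \ref{inversion}.

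Step two applies the lemma. Its conclusion gives
\[
(-1)^{n}B_{2n}=\sum_{k=1}^{n}(-1)^{n-k}\binom{2n}{n+k}\Bigl[(-1)^{k+1}(2k)\,v_{2k}-2\Bigr].
\]
Expanding and using $(-1)^{n-k}(-1)^{k+1}=(-1)^{n+1}$, and then multiplying through by $(-1)^{n}$, I get
\[
B_{2n}=-\sum_{k=1}^{n}\binom{2n}{n+k}(2k\,v_{2k})\;-\;2\sum_{k=1}^{n}(-1)^{k}\binom{2n}{n+k}.
\]

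Step three is evaluation and rearrangement. The alternating sum collapses via the symmetry $\binom{2n}{n+k}=\binom{2n}{n-k}$ applied to $0=\sum_{j=0}^{2n}(-1)^{j}\binom{2n}{j}$, yielding $\sum_{k=1}^{n}(-1)^{k}\binom{2n}{n+k}=-\tfrac12\binom{2n}{n}$. Substituting this together with $2k\,v_{2k}=z_{k}/2$ gives
\[
B_{2n}=-\tfrac12\sum_{k=1}^{n}\binom{2n}{n+k}\,z_{k}+\binom{2n}{n},
\]
i.e.\ $\sum_{k=1}^{n}\binom{2n}{n+k}z_{k}=2\binom{2n}{n}-2B_{2n}$. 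Since $\binom{2n}{2n}=1$, peeling off the $k=n$ term and rearranging produces \eqref{recu-zn} exactly. The only real obstacle is the sign bookkeeping in Step two, where three separate $(-1)$-factors (from \eqref{form1-vna1}, from the inversion formula, and from the multiplication by $(-1)^{n}$) must be combined correctly; everything else is mechanical.
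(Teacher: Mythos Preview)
Your proof is correct and follows exactly the same approach as the paper: both apply Lemma \ref{inversion} with $a_n = 2n\bigl[(-1)^{n+1}v_{2n}-1/n\bigr]$ and $b_n=(-1)^nB_{2n}$ to obtain $B_{2n}=\binom{2n}{n}-\sum_{k=1}^{n}\binom{2n}{n+k}\,2kv_{2k}$, then isolate the $k=n$ term. You have simply made explicit the sign bookkeeping and the evaluation of the alternating binomial sum that the paper compresses into ``The result follows from here.''
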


\begin{proof}
The inversion formula in Lemma \ref{inversion} is used with
\begin{equation}
a_{n} = 2n \left( (-1)^{n+1}v_{2n} - \frac{1}{n} \right) \text{ and }
b_{n} = (-1)^{n} B_{2n}
\end{equation}
\noindent
to obtain from Theorem \ref{thm-vnform} the relation
\begin{equation}
B_{2n} = \binom{2n}{n} - \sum_{k=1}^{n} \binom{2n}{n+k} 2kv_{2k}.
\end{equation}
\noindent
The result follows from here.
\end{proof}

The classical von Staudt--Clausen theorem shows that
$2B_{2n}$ is a rational number with odd denominator. The
recurrence \eqref{recu-zn} shows the same is valid for $z_{n}$. Therefore
\begin{equation*}
z_{n} \text{ reduced modulo } 2 = \text{numerator of } z_{n}
\text{ reduced modulo } 2.
\end{equation*}

\begin{proposition}
\label{prop-mod2}
The sequence $z_{n}$ reduced modulo $2$ is periodic with basic period
$\{ 1, \, 1, \, 0 \}$.
\end{proposition}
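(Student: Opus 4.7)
The plan is to reduce the recurrence of Theorem \ref{def-zn} modulo $2$ and verify the claimed period-$3$ pattern by strong induction, with the induction step handled by a roots-of-unity filter in $\mathbb{F}_4$. First I would rewrite \eqref{recu-zn} in the symmetric form
\begin{equation*}
\sum_{k=1}^n \binom{2n}{n+k}\, z_k = 2\binom{2n}{n} - 2B_{2n}.
\end{equation*}
Kummer's theorem makes $\binom{2n}{n}$ even for every $n \geq 1$, while Proposition \ref{prop:Bmod8} shows that $2B_{2n}$ is an odd $2$-adic integer for $n \geq 1$. Since $z_n$ is $2$-integral by the remark preceding the proposition, reducing modulo $2$ and isolating the $k = n$ term (whose coefficient is $1$) gives
\begin{equation*}
z_n \equiv 1 + \sum_{k=1}^{n-1}\binom{2n}{n+k}\, z_k \pmod{2}.
\end{equation*}

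Setting $y_k = [3 \nmid k]$, I would argue by strong induction on $n$. The base case $z_1 \equiv 1$ follows from the displayed congruence at $n = 1$, where the sum is empty. Assuming $z_k \equiv y_k \pmod 2$ for $1 \leq k < n$, the inductive step reduces to the single binomial identity
\begin{equation}\label{eq:reducedplan}
\sum_{k=1}^n \binom{2n}{n+k}\, y_k \equiv 1 \pmod{2}, \qquad n \geq 1.
\end{equation}

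For \eqref{eq:reducedplan} I would pass to $\mathbb{F}_4 = \mathbb{F}_2[\omega]$, where $\omega^2 + \omega + 1 = 0$. A direct case check gives $y_k \equiv \omega^k + \omega^{2k}$ in $\mathbb{F}_4$. Using $1 + \omega = \omega^2$, $\omega^{-k} = \omega^{2k}$, and the symmetry $\binom{2n}{n+k} = \binom{2n}{n-k}$, grouping $k$ with $-k$ in
\begin{equation*}
\sum_{k=-n}^{n}\binom{2n}{n+k}\, \omega^k \;=\; \omega^{-n}(1+\omega)^{2n} \;=\; \omega^{3n} \;=\; 1
\end{equation*}
yields
\begin{equation*}
\binom{2n}{n} + \sum_{k=1}^n \binom{2n}{n+k}\bigl(\omega^k + \omega^{2k}\bigr) = 1.
\end{equation*}
Reducing modulo $2$ and using that $\binom{2n}{n}$ is even for $n \geq 1$ delivers \eqref{eq:reducedplan}.

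The main obstacle is choosing the right algebraic environment for the roots-of-unity filter: passing to $\mathbb{F}_4$ rather than working over $\mathbb{C}$ eliminates the division by $3$ that would otherwise complicate a reduction modulo $2$, and it is precisely in characteristic $2$ that $[3 \nmid k]$ coincides with the clean algebraic expression $\omega^k + \omega^{2k}$. Once this observation is in place, the remaining computation is routine.
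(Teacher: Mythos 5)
Your proof is correct, and the key combinatorial step is genuinely different from the one in the paper. Both arguments share the same skeleton: reduce the recurrence of Theorem \ref{def-zn} modulo $2$ (using that $2\binom{2n}{n}\equiv 0$ and $2B_{2n}\equiv 1$, with $z_n$ being $2$-integral) and run an induction in which the entire burden falls on a binomial congruence. The paper evaluates the resulting sum $\sum_{k\equiv 1,\,2 \bmod 3}\binom{2n}{n+k}$ by splitting into the three residue classes of $n$ modulo $3$, symmetrizing, combining adjacent binomial coefficients via Pascal's rule into sums such as $\sum_{k}\binom{6m+1}{3m+3k-1}$, and then invoking the WZ method to find the closed form $\tfrac{1}{3}(64^{m}-1)$; moreover, it writes out only the case $n\equiv 0 \bmod 3$ in full and declares the other two similar. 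Your roots-of-unity filter over $\mathbb{F}_4$ replaces all of this with the single identity $\sum_{k=-n}^{n}\binom{2n}{n+k}\,\omega^{k}=\omega^{-n}(1+\omega)^{2n}=\omega^{3n}=1$, combined with the (correct) observation that in characteristic $2$ the quantity $\omega^{k}+\omega^{2k}$ is exactly the indicator of $k$ not being divisible by $3$; this treats all residue classes of $n$ uniformly, needs no computer algebra, and makes the period $3$ conceptually transparent as the multiplicative order of $\omega$. What the paper's route buys in exchange is exact integer values of the filtered sums (not merely their parity), and the authors note they tried to push that machinery to moduli $4$ and $8$; your $\mathbb{F}_4$ trick is likewise specific to modulus $2$, since it relies on $\omega+\omega^{2}=1$ holding only in characteristic $2$. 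The supporting facts you invoke ($\binom{2n}{n}$ even for $n\geq 1$, $2B_{2n}$ an odd $2$-adic integer by Proposition \ref{prop:Bmod8}, and the $2$-integrality of $z_n$ from the remark preceding the proposition) are all available, so your argument is complete as it stands.
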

\begin{proof}
The proof is by induction on $n$. The induction hypothesis is that the
pattern $\{1, \, 1, \, 0 \}$ repeats from $1$ to $n-1$.

Reduce the recurrence \eqref{recu-zn}
modulo $2$ to obtain
\begin{equation}
z_{n} \equiv - \sum_{k \equiv 1, \, 2 \bmod 3}^{n-1} \binom{2n}{n+k} - 1.
\label{recc-1}
\end{equation}
\noindent
This may be written as
\begin{equation}
z_{n} \equiv - \sum_{k=1}^{\lf \tfrac{n+1}{3} \rf}
\binom{2n}{n+3k-2} -
\sum_{k=1}^{\lf \tfrac{n}{3} \rf}
\binom{2n}{n+3k-1} - 1.
\label{recc-2}
\end{equation}
\noindent
The proof is divided in three cases according to the residue of $n$ modulo
$3$.  \\

\noindent
\textbf{Case 1}. Assume $n = 3m$. Then \eqref{recc-2} becomes
\begin{eqnarray*}
z_{3m} & \equiv & - \sum_{k=1}^{m} \binom{6m}{3m+3k-2}
- \sum_{k=1}^{m} \binom{6m}{3m+3k-1}  -1 \\
 & = & - \sum_{k=1}^{m} \binom{6m+1}{3m+3k-1} - 1.
\end{eqnarray*}
\noindent
The symmetry of the binomial coefficients shows that
\begin{equation*}
\sum_{k=1}^{m} \binom{6m+1}{3m+3k-1} = \frac{1}{2}
\sum_{k=-m+1}^{m} \binom{6m+1}{3m+3k-1} =
\frac{1}{2} \sum_{k=-\infty}^{\infty} \binom{6m+1}{3m+3k-1},
\end{equation*}
\noindent
since the terms added to form the last sum actually vanish.

The evaluation of the sum
\begin{equation}
F(m) = \sum_{k} \binom{6m+1}{3m+3k-1}
\end{equation}
\noindent
may be achieved by using the WZ-technology as developed
in \cite{petkovsek-1996a}. The authors have used the
implementation of this algorithm provided by Peter Paule at RISC. The
algorithm shows that $F(m)$ satisfies the recurrence
\begin{equation}
-64F(m) + 65F(m+1) - F(m+2) = 0.
\end{equation}
\noindent
The initial conditions $F(1) = 42$ and $F(2) = 2730$ give
\begin{equation}
F(m) = \frac{2}{3}(64^{m}-1).
\end{equation}
\noindent
Therefore
\begin{equation}
\sum_{k=1}^{m} \binom{6m+1}{3m+3k-1} = \frac{1}{3}(64^{m}-1)
\label{sum-10}
\end{equation}
\noindent
and then
\begin{equation}
z_{3m} \equiv - \frac{1}{3} (64^{m} + 2) \equiv 0 \bmod 2.
\end{equation}
\noindent
This completes the induction step in the case $n \equiv 0 \bmod 3$.
The other two cases, $n \equiv 1, \, 2 \bmod 3$, are treated by a similar
procedure. The induction step is complete.

\end{proof}

\begin{corollary}
If $n \equiv 1, \, 2 \bmod 3$, then $\nu_{2}(z_{n}) = 0$.
\end{corollary}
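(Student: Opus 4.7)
The plan is to unpack Proposition \ref{prop-mod2} directly: the corollary is really a reformulation of it, and the only substantive point is to check that ``reduction modulo $2$'' in the proposition indeed computes the $2$-adic valuation. First I would recall, as noted in the paragraph preceding Proposition \ref{prop-mod2}, that the von Staudt--Clausen theorem forces $2B_{2n}$ to be a rational number with odd denominator, and then that the recurrence \eqref{recu-zn} propagates this property inductively to $z_n$. Concretely: the binomial coefficients $\binom{2n}{n}$ and $\binom{2n}{n+k}$ are integers, so assuming $z_1, \ldots, z_{n-1}$ and $2B_{2n}$ all lie in $\mathbb{Z}_{(2)}$, the recurrence expresses $z_n$ as a $\mathbb{Z}_{(2)}$-linear combination of such terms, hence $z_n \in \mathbb{Z}_{(2)}$.

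Once $z_n$ is known to lie in $\mathbb{Z}_{(2)}$, writing $z_n = p/q$ in lowest terms with $q$ odd gives $\nu_2(z_n) = \nu_2(p)$, and the reduction of $z_n$ modulo $2$ that appears in Proposition \ref{prop-mod2} is precisely $p \bmod 2$ (interpreted via the isomorphism $\mathbb{Z}_{(2)}/2\mathbb{Z}_{(2)} \cong \mathbb{Z}/2\mathbb{Z}$). Therefore $z_n \equiv 1 \bmod 2$ is equivalent to $p$ being odd, which is exactly the statement $\nu_2(z_n) = 0$.

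Finally, I would invoke Proposition \ref{prop-mod2}: for $n \equiv 1$ or $n \equiv 2 \bmod 3$, the period pattern $\{1, 1, 0\}$ yields $z_n \equiv 1 \bmod 2$, and the translation above gives $\nu_2(z_n) = 0$. There is no real obstacle here; the only point that takes care is the bookkeeping that $\mathbb{Z}_{(2)}$-membership of $z_n$ is preserved by the recurrence, and this follows at once from the integrality of the binomial coefficients involved together with the von Staudt--Clausen input for $2B_{2n}$.
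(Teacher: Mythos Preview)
Your proposal is correct and follows essentially the same approach as the paper: the paper's one-line proof simply invokes Proposition \ref{prop-mod2} to conclude that $z_{3m+1}$ and $z_{3m+2}$ have odd numerators, relying on the observation (stated just before the Proposition) that $z_n$ has odd denominator via von Staudt--Clausen and the recurrence \eqref{recu-zn}. You spell out this background reasoning in somewhat more detail using $\mathbb{Z}_{(2)}$-language, but the logical content is identical.
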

\begin{proof}
The previous theorem shows that the numbers $z_{3m+1}$ and $z_{3m+2}$ have
odd numerators.
\end{proof}

\begin{note}
The method used to obtain the values of $z_{n}$ modulo $2$ does not extend
directly to modulo $4$ and $8$. The corresponding binomial sums satisfy
similar recurrences, but now there are boundary terms and lack of symmetry
prevents the WZ-method to be used effectively.
\end{note}

\no
\textbf{Acknowledgments}. The fourth  author acknowledges the
partial support of NSF-DMS 1112656. The third author is a post-doctoral
fellow funded in part by the same grant.  The authors wish to thank
Larry Glasser for the proof given in Section \ref{sec-glasser},
Karl Dilcher with help in the proof of Proposition \ref{prop:Bmod8},
Christoph Koutschan for providing the expression for $A_{n}$ given in
Theorem \ref{thm-koutschan} and  Matthew Rogers for pointing out the
result stated in Lemma \ref{inversion}. The authors also wish to thank
T. Amdeberhan for his valuable input into this paper.


\end{document}